\theoremstyle{plain}
\newtheorem{theorem}{Theorem}[section]
\theoremstyle{remark}
\newtheorem{remark}[theorem]{Remark}
\theoremstyle{plain}
\newtheorem{lemma}[theorem]{Lemma}
\newtheorem{proposition}[theorem]{Proposition}
\newtheorem{definition}[theorem]{Definition}
\numberwithin{equation}{section}
\def\N{{\mathbb N}}
\def\R{{\mathbb R}}
\def\H{{\mathbb H}}
\newcommand{\E}{{\mathbb E}}
\renewcommand{\b}{\beta}
\newcommand{\e}{\varepsilon}
\newcommand{\ph}{\varphi}
\renewcommand{\L}{\mathbb L}
\newcommand{\wt}{\tilde}
\newcommand{\one}{\mathbbm{1}}
\newcommand{\embed}{\hookrightarrow}
\def\typeout#1{\message{^^J}\message{#1}\message{^^J}}
\newif\ifSRCOK \SRCOKtrue
\def\EJECT{\SRC\eject}
\def\WinEdt#1{\typeout{:#1}}% WinEdt LOG MODE and INPUT
\gdef\MainFile{\jobname.tex}% ".tex" needed for MiKTeX
\gdef\CurrentInput{\MainFile}
\def\SRC{\ifSRCOK%
  \ifnum\inputlineno>\LASTLINE%
    \ifnum\LASTLINE<0%
      \global\PAGETOP=\inputlineno%
    \fi%
    \global\LASTLINE=\inputlineno%
    \ifnum\INPSP=0%
      \ifnum\inputlineno>\PAGETOP%
        
      \fi%
    \else%
      
    \fi%
  \fi%
\fi}
\def\PUSH#1{%
\SRC%
\ifnum\INPSP=0 \global\let\INPSTACKA=\CurrentInput \else%
\ifnum\INPSP=1 \global\let\INPSTACKB=\CurrentInput \else%
\ifnum\INPSP=2 \global\let\INPSTACKC=\CurrentInput \else%
\ifnum\INPSP=3 \global\let\INPSTACKD=\CurrentInput \else%
\ifnum\INPSP=4 \global\let\INPSTACKE=\CurrentInput \else%
\ifnum\INPSP=5 \global\let\INPSTACKF=\CurrentInput \else%
               \global\let\INPSTACKX=\CurrentInput \fi\fi\fi\fi\fi\fi%
\gdef\CurrentInput{#1}%
\WinEdt{<+ \CurrentInput}%
\global\LASTLINE=0%
\ifSRCOK\fi%
\global\advance\INPSP by 1}
\def\POP{%
\ifnum\INPSP>0 \global\advance\INPSP by -1  \fi%
\ifnum\INPSP=0 \global\let\CurrentInput=\INPSTACKA \else%
\ifnum\INPSP=1 \global\let\CurrentInput=\INPSTACKB \else%
\ifnum\INPSP=2 \global\let\CurrentInput=\INPSTACKC \else%
\ifnum\INPSP=3 \global\let\CurrentInput=\INPSTACKD \else%
\ifnum\INPSP=4 \global\let\CurrentInput=\INPSTACKE \else%
\ifnum\INPSP=5 \global\let\CurrentInput=\INPSTACKF \else%
               \global\let\CurrentInput=\INPSTACKX \fi\fi\fi\fi\fi\fi%
\WinEdt{<-}%
\global\LASTLINE=\inputlineno%
\global\advance\LASTLINE by -1%
\SRC}
\def\INPUT#1{\relax}
\def%Specify File Extension!
\let\originalxxxeverypar\everypar
\newtoks\everypar
\everymath\expandafter{\the\everymath\expandafter\SRC}
\output\expandafter{\expandafter\SRCOKfalse\the\output}
\newif\ifSRCOK \SRCOKtrue
\gdef\MainFile{\jobname.tex}% ".tex" needed for MiKTeX
\gdef\CurrentInput{\MainFile}
\def\EJECT{\SRC\eject}
\def\WinEdt#1{\typeout{:#1}}% WinEdt LOG MODE and INPUT
\def\SRC{\ifSRCOK%
  \ifnum\inputlineno>\LASTLINE%
    \ifnum\LASTLINE<0%
      \global\PAGETOP=\inputlineno%
    \fi%
    \global\LASTLINE=\inputlineno%
    \ifnum\INPSP=0%
      \ifnum\inputlineno>\PAGETOP%
      \fi%
    \else%
    \fi%
  \fi%
\fi}
\def\PUSH#1{%
\SRC%
\ifnum\INPSP=0 \global\let\INPSTACKA=\CurrentInput \else%
\ifnum\INPSP=1 \global\let\INPSTACKB=\CurrentInput \else%
\ifnum\INPSP=2 \global\let\INPSTACKC=\CurrentInput \else%
\ifnum\INPSP=3 \global\let\INPSTACKD=\CurrentInput \else%
\ifnum\INPSP=4 \global\let\INPSTACKE=\CurrentInput \else%
\ifnum\INPSP=5 \global\let\INPSTACKF=\CurrentInput \else%
               \global\let\INPSTACKX=\CurrentInput \fi\fi\fi\fi\fi\fi%
\gdef\CurrentInput{#1}%
\WinEdt{<+ \CurrentInput}%
\global\LASTLINE=0%
\ifSRCOK\fi%
\global\advance\INPSP by 1}
\def\POP{%
\ifnum\INPSP>0 \global\advance\INPSP by -1  \fi%
\ifnum\INPSP=0 \global\let\CurrentInput=\INPSTACKA \else%
\ifnum\INPSP=1 \global\let\CurrentInput=\INPSTACKB \else%
\ifnum\INPSP=2 \global\let\CurrentInput=\INPSTACKC \else%
\ifnum\INPSP=3 \global\let\CurrentInput=\INPSTACKD \else%
\ifnum\INPSP=4 \global\let\CurrentInput=\INPSTACKE \else%
\ifnum\INPSP=5 \global\let\CurrentInput=\INPSTACKF \else%
               \global\let\CurrentInput=\INPSTACKX \fi\fi\fi\fi\fi\fi%
\WinEdt{<-}%
\global\LASTLINE=\inputlineno%
\global\advance\LASTLINE by -1%
\SRC}
\def\INPUT#1{\relax}
\let\OldINCLUDE=\include
\def\include#1{%Always ".tex" file type!
\EJECT%
\PUSH{#1.tex}%
\OldINCLUDE{#1}%
\POP}
\def%Specify File Extension!
\let\originalxxxeverypar\everypar
\newtoks\everypar
\everymath\expandafter{\the\everymath\expandafter\SRC}
\let\zzzxxxbibliography=\bibliography
\def\bibliography#1{\PUSH{\jobname.bbl}\zzzxxxbibliography{#1}\POP}
\output\expandafter{\expandafter\SRCOKfalse\the\output}
\begin{document}

\author{Lahcen Maniar}
\address{L. maniar, Cadi Ayyad University, Faculty of Sciences Semlalia, 2390, Marrakesh, Morocco}
\email{maniar@ucam.ac.ma}

\author{Martin Meyries}
\address{M. Meyries, Institut f\"ur Mathematik, Martin-Luther-Universit\"at Halle-Wittenberg, 
06099 Halle (Saale), Germany}
\email{martin.meyries@mathematik.uni-halle.de}

\author{Roland Schnaubelt}
\address{R. Schnaubelt, Department of Mathematics, Karlsruhe Institute of Technology (KIT), 
76128 Karlsruhe, Germany} 
\email{schnaubelt@kit.edu}

\title[Null controllability for dynamic boundary conditions]{Null controllability for parabolic 
equations with dynamic boundary conditions of reactive-diffusive type}

\keywords{Parabolic problems, dynamic boundary conditions, surface diffusion, Carleman estimate, null controllability, 
observability estimate}
\subjclass[2000]{Primary: 93B05. Secondary: 35K20, 93B07}

\thanks{We thank the Deutsche Forschungsgemeinschaft which supported this research within the
grants ME 3848/1-1 and SCHN 570/4-1. M.M. thanks L.M. for a very pleasant stay in Marrakesh, where parts of this work originated.}

%\date\today

\begin{abstract}
We prove null controllability for linear and semilinear heat equations with dynamic boundary 
conditions of surface diffusion type. The results are based on a new Carleman estimate for this type 
of boundary conditions.\end{abstract}

\maketitle

%\tableofcontents

\section{Introduction}
In this paper we establish the null controllability of linear and 
semilinear parabolic equations with dynamic boundary conditions of surface diffusion type.
The prototype of such problems is
\begin{equation}\label{eq:intro} \left\{
\begin{aligned}
\partial_{t}y -\Delta y & = \one_\omega v(t,x)   & \qquad\quad &\text{in } (0,T)\times \Omega,\\
\partial_{t}y_\Gamma  -\Delta_\Gamma y_\Gamma +\partial_\nu y  & =  0  & &\text{on }(0,T)\times \Gamma, \\
(y,y_\Gamma)|_{t=0} & = (y_0, y_{0,\Gamma})  & &\text{in } \Omega\times \Gamma.
\end{aligned} \right.
\end{equation}
For all given $T>0$, $\omega \Subset \Omega$ and
initial data $y_0\in L^2(\Omega)$ in the bulk and $y_{0,\Gamma}\in L^2(\Gamma)$ on the boundary,
we want to find a control  $v\in L^2((0,T)\times\omega)$ such that the solution satisfies
\[y(T,\cdot) =0 \qquad \text{in \ } \overline{\Omega}.\]
Here $\Omega\subset \R^N$ is a bounded domain with smooth 
boundary $\Gamma = \partial\Omega$, $N\ge 2$, and the control region $\omega$ is an 
\emph{arbitrary} nonempty open subset which is strictly contained in $\Omega$ (i.e., $\overline{\omega}
\subset \Omega$). Further, $y_\Gamma = y|_\Gamma$ denotes the trace of a function $y:\Omega\to \R$, $\nu$ is the outer unit normal field, $\partial_\nu$ is the normal 
derivative at $\Gamma$, and $\Delta_\Gamma$ designates the Laplace-Beltrami operator on $\Gamma$.

The term $\partial_t y_\Gamma-\Delta_\Gamma y_\Gamma$ models the tangential diffusive flux on the boundary which is 
coupled to the diffusion equation in the bulk by the normal derivative $\partial_\nu y = (\nu\cdot \nabla y)|_{\Gamma}$. One may view \eqref{eq:intro} as a coupled system of dynamic equations for $y$ and $y_\Gamma$, with side condition $y|_{\Gamma} = y_\Gamma$. Sometimes this type of boundary 
conditions is called of Wentzell type. Dynamic surface and interface processes have attracted a lot of 
attention in recent years in the mathematical and applied literature,  see \cite{BPS, DPZ, EMR, FGGR, Gal, Glitzky, GM, Gold, HKR, MM, SW, VV}. In particular, for the mathematical theory of surface diffusion boundary conditions we refer to \cite{DPZ, FGGR, Gal, MM, SW, VV}. In Section \ref{sec:IBVP} we complete the existing $L^2$-based solution theory for \eqref{eq:intro} and its generalizations as needed in the context of null controllability. Here we obtain existence, uniqueness and regularity of strong, mild and distributional solutions.

We state main result ensuring the null controllability of \eqref{eq:intro}, see 
Theorem~\ref{thm:control-linear}. We emphasize that the initial data $y_0$ and $y_{0,\Gamma}$ on $\Omega$ 
and $\Gamma$ need not be related.
\begin{theorem} \label{thm:intro} 
For  each $T > 0$,  each nonempty open set $\omega \Subset \Omega$ and  all initial data
$y_0\in L^2(\Omega)$ and $y_{0,\Gamma}\in L^2(\Gamma)$ there is a control $v\in L^2((0,T)\times \omega)$ 
such that the unique strong solution $y$ of \eqref{eq:intro} satisfies $y(T,\cdot) =0$ on $\overline{\Omega}$.
\end{theorem}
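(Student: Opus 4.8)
The plan is to follow the by-now classical Hilbert Uniqueness Method (HUM) combined with a global Carleman estimate adapted to the dynamic boundary conditions. First I would reduce the null controllability of the linear system \eqref{eq:intro} to an observability estimate for the adjoint problem. Since the Laplacian with Wentzell boundary conditions is self-adjoint in the natural product space $\mathbb{H} = L^2(\Omega)\times L^2(\Gamma)$ (with the measure $dx$ on $\Omega$ and the surface measure on $\Gamma$), the adjoint system has the same structure:
\begin{equation}\label{eq:adjoint-plan}
\left\{
\begin{aligned}
-\partial_t \varphi - \Delta\varphi &= 0 & &\text{in } (0,T)\times\Omega,\\
-\partial_t \varphi_\Gamma - \Delta_\Gamma\varphi_\Gamma + \partial_\nu\varphi &= 0 & &\text{on } (0,T)\times\Gamma,\\
(\varphi,\varphi_\Gamma)|_{t=T} &= (\varphi_T,\varphi_{T,\Gamma}) & &\text{in } \Omega\times\Gamma.
\end{aligned}
\right.
\end{equation}
The goal is to prove the observability inequality
\[
\n (\varphi(0,\cdot),\varphi_\Gamma(0,\cdot))\n_{\mathbb{H}}^2 \le C \int_0^T\!\!\int_\omega |\varphi(t,x)|^2\,dx\,dt
\]
for all solutions of \eqref{eq:adjoint-plan}. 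By a standard duality/minimization argument (minimizing a suitable quadratic functional over the final data, or via the Fenchel–Rockafellar theorem), this observability estimate is equivalent to the existence of a control $v\in L^2((0,T)\times\omega)$ steering $(y_0,y_{0,\Gamma})$ to zero at time $T$; the regularity ``strong solution'' then comes from the smoothing of the parabolic semigroup established in Section~\ref{sec:IBVP}, since the state at any positive time lies in the domain of the generator.

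The heart of the matter, and the main obstacle, is the global Carleman estimate for the adjoint system \eqref{eq:adjoint-plan}. One chooses a weight function built from a function $\eta^0\in C^2(\overline{\Omega})$ with $\eta^0>0$ in $\Omega$, $\eta^0 = 0$ on $\Gamma$, and $|\nabla\eta^0|>0$ outside $\overline{\omega_0}$ for some $\omega_0\Subset\omega$; then set, with parameters $\lambda,s$ large,
\[
\varphi_C(t,x) = \frac{e^{2\lambda K} - e^{\lambda(K+\eta^0(x))}}{t(T-t)}, \qquad
\xi(t,x) = \frac{e^{\lambda(K+\eta^0(x))}}{t(T-t)},
\]
and the weight $\theta = e^{-s\varphi_C}$. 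The new difficulty compared with the classical Fursikov–Imanuvilov estimate is the presence of the boundary equation: after multiplying the bulk equation by $\theta$, integrating by parts, and splitting the operator into its symmetric and antisymmetric parts, the boundary integrals no longer vanish but instead feed into the surface equation. One must therefore carry along the Laplace–Beltrami operator $\Delta_\Gamma$ on $\Gamma$ and treat \eqref{eq:adjoint-plan} as a genuine system, producing a Carleman estimate of the form
\[
\int_Q \Big( s\lambda^2\xi|\nabla\varphi|^2 + s^3\lambda^4\xi^3|\varphi|^2\Big)\theta^2
+ \int_\Sigma \Big( s\lambda\xi|\nabla_\Gamma\varphi_\Gamma|^2 + s^3\lambda^3\xi^3|\varphi_\Gamma|^2 + s\lambda\xi|\partial_\nu\varphi|^2\Big)\theta^2
\le C\, s^3\lambda^4 \int_0^T\!\!\int_{\omega_0}\xi^3|\varphi|^2\theta^2,
\]
with $Q=(0,T)\times\Omega$ and $\Sigma=(0,T)\times\Gamma$. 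The delicate points are: controlling the sign of the extra surface terms arising from $\partial_\nu$ and from the interaction of the bulk weight with the tangential gradient, which requires $\partial_\nu\eta^0 < 0$ on $\Gamma$ (a Hopf-type condition compatible with $\eta^0>0$ inside and $\eta^0=0$ on the boundary); handling the cross term $\partial_\nu\varphi$ that couples the two equations so that it appears with a good sign on the left; and absorbing the remaining lower-order terms by choosing first $\lambda$ then $s$ sufficiently large. I would model the computation on the localized integrations by parts of Fursikov–Imanuvilov, but perform the boundary integration by parts using the surface divergence theorem on $\Gamma$ (so that $\Delta_\Gamma$ produces $|\nabla_\Gamma\varphi_\Gamma|^2$ terms), and keep careful track of every boundary contribution.

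Once the Carleman estimate is in hand, the passage to observability is routine: one restricts the weights to $[T/4,3T/4]$ where $\theta$ is bounded above and below, uses the backward uniqueness / energy estimate for \eqref{eq:adjoint-plan} to bound $\n(\varphi(0),\varphi_\Gamma(0))\n_{\mathbb{H}}^2$ by $\n(\varphi(T/2),\varphi_\Gamma(T/2))\n_{\mathbb{H}}^2$ and hence by the left-hand side of the Carleman inequality over the middle time interval, and finally bounds the right-hand side by $C\int_0^T\!\!\int_\omega|\varphi|^2$ after removing the weights. This yields the observability inequality for \eqref{eq:adjoint-plan} with a finite constant $C=C(T,\Omega,\omega)$, and therefore, by the duality argument of the first paragraph, the null controllability asserted in Theorem~\ref{thm:intro}. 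The semilinear case (not needed here) would then follow by a fixed-point argument, but for the linear statement the three ingredients — HUM duality, the new boundary Carleman estimate, and the parabolic smoothing from Section~\ref{sec:IBVP} — suffice.
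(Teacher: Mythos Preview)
Your proposal is correct and follows essentially the same approach as the paper: a Carleman estimate for the adjoint system with careful tracking of the new boundary terms coming from the dynamic condition (using $\partial_\nu\eta^0<0$ on $\Gamma$ and the surface divergence theorem), passage to an observability inequality via the middle time interval $[T/4,3T/4]$ and the energy estimate, and then null controllability by duality. The only cosmetic difference is that the paper phrases the duality step via a range-inclusion criterion for the operators $\mathcal{S}$ and $\mathcal{T}$ rather than HUM/Fenchel--Rockafellar, but these are equivalent formulations of the same argument.
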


Null controllability results of this type are known for Dirichlet and for  inhomogeneous or nonlinear Neumann boundary conditions 
(also called Robin or Fourier boundary conditions), see e.g.\ \cite{CIK,DFGZ, FGGP1, FGGP2, I95} and the 
survey article \cite{FCG}. Our Theorem~\ref{thm:control-linear} actually treats  a more general version 
of the control problem \eqref{eq:intro} with nonautonomous potential terms and inhomogeneities in the bulk
 and on the boundary. We also show null controllability for a semilinear variant of \eqref{eq:intro} 
including globally Lipschitz nonlinearities both in the bulk and on the boundary, see 
Theorem~\ref{thm:control-semilinear}. In view of the results in \cite{DFGZ, FGGP1, FGGP2, FCG}, we expect 
that one can extend the results to nonlinearities with slightly superlinear growth and containing gradient 
terms, using more involved regularity theory for \eqref{eq:intro}. For Dirichlet or Neumann boundary 
conditions one can also show  null controllability for controls acting at a part of the boundary by means of 
an auxiliary domain control problem on an enlarged spatial domain, see \cite{CIK} or \cite{DFGZ}. 
However, this trick does not seem to work for dynamical boundary conditions.

We are not aware of results on null controllability for parabolic problems with dynamical boundary 
conditions.  Optimal control problems for such equations were treated in \cite{HKR}.
 Approximate boundary controllability of a one-dimensional heat equation with dynamical boundary conditions
was studied  in \cite{KN} by completely different methods.

Theorem~\ref{thm:intro} relies on a Carleman estimate for the inhomogeneous dual problem corresponding to 
\eqref{eq:intro}, which is proved in Lemma~\ref{lem:carleman}. Roughly speaking, this estimate bounds a
weighted $L^2$-norm of the solution $\ph$ to the dual problem by a weighted $L^2$-norm 
of the inhomogeneities and of the restriction $\ph|_\omega$. These weights 
tend to zero exponentially as $t\to 0$ and $t\to T$. The proof of our Carleman estimate follows the known
strategy of the Dirichlet case, see \cite{FCG}, but the dynamic boundary condition leads to various new 
boundary terms. Some of these enter in the final estimate, a few cancel, and others can be controlled using 
the smoothing effect of the  surface diffusion in  \eqref{eq:intro}.  For this reason we cannot treat the 
corresponding equations without  surface diffusion, see Remark \ref{rem:delta=0} for more details.

With the Carleman estimate at hand, standard arguments yield an observability estimate 
 for the solution $\ph$ of the dual homogeneous backward problem \eqref{eq:4}--\eqref{eq:6},
see Proposition~\ref{lem:observa}. A simple transformation then yields the analogous
inequality
\[ \|y(T,\cdot)\|_{L^2(\Omega)}^2 +  \|y_\Gamma(T,\cdot)\|_{L^2(\Gamma)}^2   
\leq C \int_0^T \int_{\omega} |y|^2 \,dx\,dt,\]
for the solution of \eqref{eq:intro} with $v=0$. One calls this property the final state observability
since the observation on $\omega$ controls the state at the end time. By duality, the observability estimate for $\ph$ then yields the null  controllability  of \eqref{eq:intro}
as stated in Theorem~\ref{thm:intro}, see Theorem~\ref{thm:control-linear}. We refer to \cite{TW}
for a discussion of various controllability and observability concepts.

 The proof of the null controllability of the semilinear 
equation relies on Schauder's fixed point theorem. To set up this fixed point argument, one has
to construct a control with minimal weighted $L^2$-norm for the inhomogenous linear system involving nonautonomous 
potential terms. This optimization problem is solved in Proposition~\ref{lem:min}. In its proof we adopt
the methods of Imanuvilov's seminal paper \cite{I95} to the case of dynamic boundary conditions.

This paper is organized as follows. In Section \ref{sec:IBVP} we introduce the functional analytic setting
 and  prove basic results for (versions of) \eqref{eq:intro} concerning well-posedness in the framework of 
 strong, mild and distributional solutions. The Carleman estimate is proved in 
Section~\ref{sec:carleman} and the null controllability results are obtained in Section \ref{sec:null}.

\section{The initial-boundary value problem}\label{sec:IBVP}
Let $T > 0$ and a bounded domain $\Omega \subset \R^N$, $N\geq 2$, with smooth boundary $\Gamma = 
\partial \Omega$ and outer unit normal field $\nu$ on $\Gamma$ be given. We write
$$\Omega_T = (0,T)\times \Omega,  \qquad \omega_T = (0,T)\times \omega ,
\qquad \text{and}\qquad \Gamma_T = (0,T)\times \Gamma,$$
where $\omega\Subset \Omega$ is open. In this section we establish wellposedness and regularity properties of solutions of the 
inhomogeneous linear system
\begin{alignat}{2}
\partial_t y -d\Delta y +a(t,x)y& = f(t,x) &  \qquad\quad &\text{in }\Omega_T,\label{eq:4a} \\
\partial_t y -\delta \Delta_\Gamma y +d\partial_\nu y + b(t,x) y & = g(t,x) &  
        &\text{on }\Gamma_T, \label{eq:5a}\\
y(0,\cdot)  & = y_0 & &\text{in } \overline{\Omega}, \label{eq:6a}
\end{alignat}
for given coefficients $d,\delta > 0$, $a\in L^\infty(\Omega_T)$ and $b\in L^\infty(\Gamma_T)$. Here and in the rest of the paper, the initial condition \eqref{eq:6a} is short for $(y,y_\Gamma)|_{t=0} = (y_0,y_{0,\Gamma})$ in $\Omega\times \Gamma$, and we neglect the traces in \eqref{eq:5a}.

\subsection{Function spaces} 
The Lebesgue measure on $\Omega$ and the surface measure on $\Gamma$ are denoted by $dx$ and $d S$, 
respectively. We consider the real Hilbert spaces (and tacitly their complexifications if necessary)
\begin{align*}
\L^2 &:= L^2(\Omega)\times L^2(\Gamma), \qquad 
\H^k := \{(y,y_\Gamma)\in H^k(\Omega) \times H^k(\Gamma)\,:\,y|_\Gamma=y_\Gamma\}\quad\text{for \ }k\in\N,\\
\E_1(t_0,t_1) &:= H^1(t_0,t_1; \L^2) \cap L^2(t_0,t_1; \H^2) \quad \text{for } t_1>t_0 \text{ in } \R, 
  \qquad \E_1:=\E_1(0,T).
\end{align*}
The scalar product on $\L^2$ is given by
$$\langle (y,y_\Gamma), (z,z_\Gamma)\rangle_{\L^2} 
  = \langle y,z\rangle_{L^2(\Omega)} + \langle y_\Gamma, z_\Gamma\rangle_{L^2(\Gamma)}.$$ 
Further, $H^k(\Omega)$ are the usual $L^2$-based Sobolev spaces over $\Omega$. The spaces $H^k(\Gamma)$ are 
defined via local coordinates, see e.g. Definition 3.6.1 in \cite{Triebel}. At a few points we will also 
need the fractional order spaces $H^s(\Omega)$ and $H^s(\Gamma)$ with noninteger  $s \geq 0$. For our 
purposes it suffices to define them as interpolation spaces
$$H^s(\Omega) = (L^2(\Omega), H^2(\Omega))_{s/2,2}, 
\qquad H^s(\Gamma) = (L^2(\Gamma), H^2(\Gamma))_{s/2,2}, \qquad s\in [0,2],$$
where $(\cdot,\cdot)_{s/2,2}$ denotes the real interpolation functor, see Chapter~1 and Theorem~4.3.1/2 of
\cite{Triebel} or Chapter~1 of \cite{Lunardi-Int}. As a consequence, we obtain the interpolation inequalities
\begin{equation}\label{eq:interpol}
 \|y\|_{H^s(\Omega)} \leq C \|y\|_{L^2(\Omega)}^{1-s/2}\,\|y\|_{H^2(\Omega)}^{s/2}, \qquad 
\|y_\Gamma\|_{H^s(\Gamma)} \leq C \|y_\Gamma\|_{L^2(\Gamma)}^{1-s/2}\,\|y_\Gamma\|_{H^2(\Gamma)}^{s/2}, 
\qquad s\in [0,2],
\end{equation}
see Theorem~1.3.3 of \cite{Triebel}. 

For every $s > \frac12$, the trace operator on $\Gamma$ is continuous and surjective from $H^s(\Omega)$ to 
$H^{s-1/2}(\Gamma)$ and has a continuous right-inverse $\mathcal E_\Gamma: H^{s-1/2}(\Gamma) \to 
H^s(\Omega)$, see e.g.\ Theorem~4.7.1 in \cite{Triebel}. Given $s_0 > \frac12$, the right-inverse can be chosen to be independent of $s < s_0$. The normal derivative 
$\partial_\nu y = (\nu \cdot \nabla y)|_{\Gamma}$ is thus continuous from $H^s(\Omega)$  to
$H^{s-3/2}(\Gamma)$ for each $s > \frac32$.

We emphasize that for $(y,y_\Gamma)\in \L^2$ the components $y$ and $y_\Gamma$ need not be related.
However, we often identify $(y, y_\Gamma)\in \H^k$ for $k\geq 1$ with its first component $y$ only and 
write $y\in \H^k$, with a slight abuse of notation. By the properties of the trace, the space $\H^k$ 
is a proper closed subspace of $H^k(\Omega)\times H^k(\Gamma)$. 

Finally, for open sets $\omega\subset \Omega$, we consider $L^2(\omega)$ as a closed subspace of 
$L^2(\Omega)$ by extending functions on $\omega$ by zero to $\Omega$.

\subsection{The Laplace-Beltrami operator.} \label{sec:beltrami} We refer to Chapter~3 of \cite{Jost}
or Sections~2.4 and 5.1 of \cite{Tay} for more details and proofs. The operator $\Delta_\Gamma$ on $\Gamma$
 is  given by 
$$(\Delta_\Gamma y)\circ \text{g} =  \frac{1}{\sqrt{\det\text{G}}} \sum_{i,j=1}^{N-1}\partial_i 
    (\sqrt{\det\text{G}}\, \text{g}^{ij}\partial_j (y\circ \text{g}))$$
in local coordinates $\text{g}$, where $\text{G} = (\text{g}_{ij})$ is the metric tensor corresponding 
to $\text{g}$ and $\text{G}^{-1} = (\text{g}^{ij})$ denotes its inverse. However, in this paper we will 
not use this local formula, but rather the surface divergence theorem
\begin{equation}\label{eq:sdt}
 \int_{\Gamma} \Delta_\Gamma y\,z \: dS 
= - \int_{\Gamma } \langle\nabla_\Gamma y, \nabla_\Gamma z\rangle_\Gamma \,dS, \qquad 
      y \in H^2(\Gamma), \;z\in H^1(\Gamma),
\end{equation}
where $\nabla_\Gamma$ is the surface gradient and $\langle\cdot, \cdot\rangle_\Gamma$ is the Riemannian 
inner product of tangential vectors on $\Gamma$. The Laplace-Beltrami operator with domain $H^2(\Gamma)$ 
is self-adjoint and negative  on $L^2(\Gamma)$, cf.\ p.\ 309 of \cite{Tay}, and it thus generates an 
analytic $C_0$-semigroup on $L^2(\Gamma)$. Hence, $\|y\|_{L^2(\Gamma)}+\|\Delta_\Gamma y\|_{L^2(\Gamma)}$ 
defines an equivalent norm on $H^2(\Gamma)$. Moreover, $\|y\|_{L^2(\Gamma)}
 +\|\nabla_\Gamma y\|_{L^2(\Gamma)}$ gives an equivalent norm on $H^1(\Gamma)$.

\subsection{The Laplacian with surface diffusion boundary conditions} 
On $\L^2$ we consider the linear operator
$$A =\left ( \begin{array}{cc} d\Delta & 0 \\ -d\partial_\nu & \delta \Delta_\Gamma
\end{array}\right), \qquad D(A) = \H^2.
$$
Observe that $\E_1 = H^1(0,T;\L^2)\cap L^2(0,T; D(A))$. Strong solutions of \eqref{eq:4a}--\eqref{eq:6a}
will belong to $\E_1$. We show that $A$ is selfadjoint and negative. Our well-posedness and regularity 
results for the underlying evolution  equations rely on this fact.
\begin{proposition}\label{prop:semigroup} 
The operator $A$ is densely defined, self-adjoint, negative  and generates an analytic 
$C_0$-semigroup $(e^{tA})_{t\ge0}$ on $\L^2$. We further have $(\L^2, \H^2)_{1/2,2} = \H^1.$
\end{proposition}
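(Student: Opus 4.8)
The plan is to verify the three claims about $A$ — self-adjointness and negativity, generation of an analytic semigroup, and the interpolation identity — by exhibiting $A$ as the operator associated with a symmetric, continuous, coercive (after shift) bilinear form on an appropriate form domain. Define on $\H^1$ the form
\[
\mathfrak{a}\big((y,y_\Gamma),(z,z_\Gamma)\big) = d\int_\Omega \nabla y\cdot\nabla z\,dx + \delta\int_\Gamma \langle\nabla_\Gamma y_\Gamma,\nabla_\Gamma z_\Gamma\rangle_\Gamma\,dS .
\]
First I would check that $\H^1$ is densely and continuously embedded in $\L^2$ (using that $\H^1$ contains, e.g., $\mathcal E_\Gamma$-extensions of smooth boundary data plus $C_c^\infty(\Omega)$ functions, hence is dense), that $\mathfrak a$ is symmetric and bounded on $\H^1\times\H^1$, and that $\mathfrak a(u,u) + \|u\|_{\L^2}^2 \gtrsim \|u\|_{\H^1}^2$; the latter follows from the equivalent-norm statements for $H^1(\Gamma)$ and $H^1(\Omega)$ recalled in Section~\ref{sec:beltrami} together with Poincaré-type control of $\|y\|_{L^2(\Omega)}$ by $\|\nabla y\|_{L^2(\Omega)}$ and the trace $\|y|_\Gamma\|_{L^2(\Gamma)}$. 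By the standard form representation theorem, $\mathfrak a$ induces a self-adjoint operator $B$ on $\L^2$ which is bounded below, generates an analytic $C_0$-semigroup, and has $(\L^2, D(B))_{1/2,2}$ equal to the form domain $\H^1$ (the last by Kato's square-root description of real interpolation for form domains, or by the spectral theorem).

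The main step is then to identify $D(B) = \H^2$ and $B = A$. For $u=(y,y_\Gamma)\in D(B)$ with $Bu = (f,f_\Gamma)$ one has $\mathfrak a(u,v) = \langle (f,f_\Gamma),v\rangle_{\L^2}$ for all $v\in\H^1$. Testing first with $v$ supported in $\Omega$ gives $-d\Delta y = f$ in the distributional sense on $\Omega$, hence interior and (by elliptic regularity up to the boundary, since the resulting Neumann-type datum will be in the right trace space) global $H^2(\Omega)$-regularity of $y$ once the boundary condition is understood. Integrating by parts in $\Omega$ and using the surface divergence theorem \eqref{eq:sdt}, the remaining identity reads $\int_\Gamma(d\partial_\nu y)z_\Gamma\,dS - \delta\int_\Gamma(\Delta_\Gamma y_\Gamma)z_\Gamma\,dS = \int_\Gamma f_\Gamma z_\Gamma\,dS$ for all admissible traces $z_\Gamma$, which forces $y_\Gamma\in H^2(\Gamma)$ (using that $\Delta_\Gamma$ with domain $H^2(\Gamma)$ is the form operator of the Dirichlet form on $\Gamma$) and $-\delta\Delta_\Gamma y_\Gamma + d\partial_\nu y = f_\Gamma$, i.e. exactly $u\in\H^2$ and $Au = (f,f_\Gamma)$. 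The reverse inclusion $\H^2\subseteq D(B)$ with $A=B$ on it is a direct integration-by-parts computation. A small technical point to handle carefully is the regularity bootstrap: a priori $\partial_\nu y$ only lies in $H^{-1/2}(\Gamma)$, so one should first derive $\Delta_\Gamma y_\Gamma \in H^{-1/2}(\Gamma)$, deduce $y_\Gamma\in H^{3/2}(\Gamma)$, hence $\partial_\nu y = f_\Gamma + \delta\Delta_\Gamma y_\Gamma \in L^2(\Gamma)$... wait, that is not yet enough; one iterates once more — $\partial_\nu y\in H^{-1/2}$ and $y\in H^{3/2}(\Omega)$ give via the equation $-d\Delta y=f$ that actually $y\in H^2(\Omega)$ with $\partial_\nu y\in H^{1/2}(\Gamma)$, whence $\Delta_\Gamma y_\Gamma = \delta^{-1}(d\partial_\nu y - f_\Gamma)\in L^2(\Gamma)$ and $y_\Gamma\in H^2(\Gamma)$.

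Negativity (as opposed to mere semiboundedness) then follows because $\mathfrak a(u,u)\ge 0$ for all $u$ and $\mathfrak a(u,u) = 0$ forces $\nabla y = 0$ on $\Omega$ and $\nabla_\Gamma y_\Gamma = 0$ on $\Gamma$, so $y$ is constant on $\Omega$ and $y_\Gamma$ constant on $\Gamma$ with $y|_\Gamma = y_\Gamma$; such $u$ lies in $D(A)$ with $Au=0$, so $0$ is an eigenvalue and $A\le 0$, i.e. $\langle Au,u\rangle_{\L^2}\le 0$, which is what ``negative'' means here (the semigroup is contractive but not strictly so). Density of $D(A)=\H^2$ follows since $\H^2$ is dense in the form domain $\H^1$ which is dense in $\L^2$. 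Finally, combining $D(B)=\H^2=D(A)$ with the abstract interpolation identity for the form domain yields $(\L^2,\H^2)_{1/2,2} = \H^1$. I expect the regularity bootstrap identifying $D(B)$ with $\H^2$ — in particular getting the boundary term to have enough smoothness to apply the surface divergence theorem rigorously — to be the only genuinely delicate part; everything else is a routine application of the form method.
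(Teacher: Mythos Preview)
Your approach is correct and follows the standard form-method strategy, but it differs from the paper's in the key step of identifying the form operator with $A$. The paper also starts from the same form $\mathfrak a$ on $\H^1$ and obtains a self-adjoint operator $\tilde A\supseteq -A$; however, instead of proving $D(\tilde A)\subseteq \H^2$ by a regularity bootstrap, it shows directly that $\lambda-A:\H^2\to\L^2$ is \emph{surjective} for large $\lambda$ via a contraction argument on $H^{7/4}(\Omega)$. Concretely, it writes a candidate solution as a fixed point of $S(y)=S_D(f,S_\Gamma(g-d\partial_\nu y))$, where $S_D$ and $S_\Gamma$ are the Dirichlet and Laplace--Beltrami solution operators, and uses interpolation-type resolvent bounds to get a contraction constant $C\lambda^{-1/16}$. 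This gives $-A=\tilde A$ without invoking elliptic regularity in fractional scales.

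Your bootstrap (from $\partial_\nu y\in H^{-1/2}(\Gamma)$ to $y_\Gamma\in H^{3/2}(\Gamma)$ via $\Delta_\Gamma$-regularity, then to $y\in H^2(\Omega)$ via Dirichlet regularity, then back to $y_\Gamma\in H^2(\Gamma)$) is valid and arguably more conceptual; its cost is that it requires elliptic regularity for both the Dirichlet problem and $\Delta_\Gamma$ in noninteger Sobolev spaces, whereas the paper only uses the $L^2\to H^2$ mapping of the two resolvents plus the interpolation inequality \eqref{eq:interpol}. Two minor remarks: the coercivity $\mathfrak a(u,u)+\|u\|_{\L^2}^2\gtrsim\|u\|_{\H^1}^2$ is immediate from the definition of the $H^1$-norms and needs no Poincar\'e argument; and in your bootstrap the clean route after obtaining $y_\Gamma\in H^{3/2}(\Gamma)$ is to apply Dirichlet regularity directly (boundary datum in $H^{3/2}$, right-hand side in $L^2$) to get $y\in H^2(\Omega)$, rather than passing through $y\in H^{3/2}(\Omega)$.
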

\begin{proof} \emph{Step 1.} It is straightforward to check that $\{(y,y|_{\Gamma})\,:\, y\in C^\infty(\overline{\Omega})\} \subset \H^2$ is dense in $\L^2$, hence $A$ is densely defined.

\emph{Step 2.} To prove the asserted properties of $A$, on $\L^2$ we introduce the densely defined, closed,
 symmetric, positive  sesquilinear form
$$\mathfrak a[y,z] = \int_\Omega d\,\nabla y\cdot \nabla\overline{z}\,dx 
            +  \int_\Gamma \delta\,\langle \nabla_\Gamma y,\nabla_\Gamma\overline{ z}\rangle_\Gamma\,dS, 
             \qquad D(\mathfrak a) = \H^1.$$
It induces a positive self-adjoint sectorial operator $\tilde A$ on $\L^2$ which is given as follows, see Theorems~VI.2.1 and VI.2.6 in \cite{Kato}. A function  $y\in \H^1$ belongs to $D(\tilde A)$  if and only if there is  $f\in \L^2$ such that
$\mathfrak a[y,\varphi] = \langle f,\varphi\rangle_{\L^2}$ 
for all $\varphi\in \H^1$, and in this case $\tilde A y = f$.  Integration by parts and \eqref{eq:sdt} yield that $\H^2\subset D(\tilde A)$ and 
$\tilde A y = -A y$  for all $y\in \H^2$,  i.e., $\tilde A$ is an extension of $-A$. 

\emph{Step 3.} To obtain $-A = \tilde A$ and that $A$ generates an analytic $C_0$-semigroup, it remains
to show that $\lambda-A$ is surjective for some  $\lambda> 0$. Let $\lambda\geq 1$. 
Recall that the Dirichlet Laplacian $\Delta_D$ with domain $H^2(\Omega)\cap H^1_0(\Omega)$
is negative and selfadjoint in  $L^2(\Omega)$. The bounded linear operator 
$S_D: L^2(\Omega)\times H^{3/2}(\Gamma)\to H^2(\Omega)$ given by 
\begin{equation}\label{eq:SD}
S_D(f,h)=(\lambda-d\Delta_D)^{-1}(f-(\lambda-d\Delta)\mathcal E_\Gamma h) + \mathcal E_\Gamma h
\end{equation}
yields the unique solution $y=S_D(f,h)$ in $H^2(\Omega)$ of
$$\lambda y- d\Delta y = f \text{ \ in }\Omega, \qquad y|_{\Gamma} = h \text{ \ on }\Gamma$$
for $(f,h)\in  L^2(\Omega)\times H^{3/2}(\Gamma)$, where $\mathcal E_\Gamma$  is the right-inverse of the 
trace operator. Moreover, the bounded linear operator 
$S_\Gamma=(\lambda-\delta \Delta_\Gamma)^{-1}: L^2(\Gamma)\to H^2(\Gamma)$ provides the unique solution 
$y_\Gamma = S_\Gamma(g)$ in $H^2(\Gamma)$ of
$$\lambda y_\Gamma - \delta \Delta_\Gamma y_\Gamma = g\text{ \ on }\Gamma$$
for $g\in L^2(\Gamma)$. Let $(f,g)\in \L^2$ be given. We define the map $S$ by
$$S(y) = S_D(f, S_\Gamma(g-d\partial_\nu y))$$
for  $y\in H^{7/4}(\Omega)$, say, 
and show below that $S$ is a strict contraction on $H^{7/4}(\Omega)$ if $\lambda\ge 1$ is sufficiently 
large. The resulting fixed point   $y = S(y)$ in $H^{7/4}(\Omega)$ then belongs to $H^2(\Omega)$, 
its trace $y|_\Gamma$ is contained in $H^2(\Gamma)$, and they satisfy
\[
 \lambda y- d\Delta y = f \text{ \ in }\Omega, \qquad 
  \lambda y|_\Gamma - \delta\Delta_\Gamma y|_\Gamma + d\partial_\nu y   = g \text{ \ on }\Gamma.
\]
In particular, $y\in \H^2$ and $(\lambda-A)y=(f,g)$, as required for the surjectivity of $\lambda-A$.

Since $\Delta_\Gamma$ and $\Delta$ generate an analytic semigroup, the corresponding resolvent estimates 
and the interpolation inequality \eqref{eq:interpol} show that 
\begin{equation}\label{est:resolv}
\|S_\Gamma (y_\Gamma)\|_{H^s(\Gamma)}\leq C\lambda^{\frac{s}{2}-1}\,\|y_\Gamma\|_{L^2(\Gamma)}
\quad\text{ and }\quad
\|(\lambda-d\Delta_D)^{-1}y\|_{H^s(\Omega)}\leq C\lambda^{\frac{s}{2}-1}\,\|y\|_{L^2(\Omega)}
\end{equation}
for $s\in [0,2]$, where here and below $C$ does not depend on $\lambda$ or the respective functions. 
We also need an estimate for $S_D$ with precise dependence on $\lambda$. Formulas \eqref{eq:SD} 
and \eqref{est:resolv} imply
\begin{align*}
 \|S_D(0,h)\|_{H^{7/4}(\Omega)} 
& \leq C\lambda^{-\frac18} \,\|(\lambda-d\Delta)\mathcal E_\Gamma h\|_{L^2(\Omega)} 
    +\| \mathcal E_\Gamma h\|_{H^{7/4}(\Omega)}\\
 &\leq C\lambda^{-\frac18} \,(\lambda\|\mathcal E_\Gamma h\|_{L^2(\Omega)} 
    + \|\Delta \mathcal E_\Gamma h\|_{L^2(\Omega)}) + C\|h\|_{H^{5/4}(\Gamma)}\\
 & \leq  C \lambda^{\frac78}\, \|h\|_{H^{1/8}(\Gamma)}+ C\|h\|_{H^{3/2}(\Gamma)}.
\end{align*}
For $y,z\in H^{7/4}(\Omega)$, these inequalities yield
\begin{align*}
\|S(y)-S(z)\|_{H^{7/4}(\Omega)}&\,  = \|S_D(0, -d S_\Gamma(\partial_\nu (y-z)))\|_{H^{7/4}(\Omega)} \\
&\, \leq C \lambda^{\frac78}\, \|S_\Gamma(\partial_\nu (y-z))\|_{H^{1/8}(\Gamma)} 
   + C\|S_\Gamma(\partial_\nu (y-z))\|_{H^{3/2}(\Gamma)} \\
&\, \leq C\lambda^{-\frac{1}{16}}\, \|\partial_\nu(y-z)\|_{L^2(\Gamma)} 
    + C \lambda^{-\frac14} \,\|\partial_\nu(y-z)\|_{L^2(\Gamma)} \\
&\, \leq C \lambda^{-\frac{1}{16}}\, \|y-z\|_{H^{7/4}(\Omega)}.
\end{align*}
Therefore $S$ is a strict contraction for sufficiently large $\lambda\ge1$, and we conclude $A=-\tilde A$.

\emph{Step 4.} Theorem~VI.2.23 of \cite{Kato} then says that $D(A^{1/2}) = D(\mathfrak a)= \H^1$. 
Moreover, Theorem 4.36 of \cite{Lunardi-Int} yields $D(A^{1/2}) = (\L^2, D(A))_{1/2,2}$, so that
$\H^1 = (\L^2, \H^2)_{1/2,2}.$
\end{proof}

We note a few basic properties of the space $\E_1$.

\begin{proposition} \label{prop:E1} The following assertions are true.
\begin{itemize}
 \item[\text{(a)}] For each $\tau \in [0,T]$, the trace space of $\E_1$ at $t = \tau$ equals $\H^1$. 
 We further have
 $$\E_1\hookrightarrow C([0,T]; \H^1).$$
 \item[\text{(b)}] The space $\E_1$ embeds compactly into $L^2(0,T;\L^2)$.
\end{itemize}
\end{proposition}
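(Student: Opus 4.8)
The plan is to prove Proposition~\ref{prop:E1} by reducing both assertions to known abstract facts about the space $H^1(0,T;X_0)\cap L^2(0,T;X_1)$ for a densely embedded pair $X_1\hookrightarrow X_0$ of Hilbert spaces, applied with $X_0=\L^2$ and $X_1=\H^2$. Recall from Proposition~\ref{prop:semigroup} that $\H^2 = D(A)$ and $(\L^2,\H^2)_{1/2,2}=\H^1$, which is exactly the real interpolation space that arises in the abstract trace theory.

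\emph{Part (a).} I would invoke the classical trace theorem for anisotropic Sobolev spaces (Lions–Magenes; see e.g.\ \cite{Lunardi-Int}): if $u\in H^1(0,T;X_0)\cap L^2(0,T;X_1)$, then for each $\tau\in[0,T]$ the evaluation $u(\tau)$ is well defined, lies in the real interpolation space $(X_0,X_1)_{1/2,2}$, and the map $u\mapsto u(\tau)$ is bounded and surjective onto this space, with a bounded right inverse. Moreover $u\in C([0,T];(X_0,X_1)_{1/2,2})$ with continuous embedding. With $(\L^2,\H^2)_{1/2,2}=\H^1$ from Proposition~\ref{prop:semigroup}, this gives precisely $\E_1\hookrightarrow C([0,T];\H^1)$ and identifies the trace space at $t=\tau$ with $\H^1$; surjectivity follows since one can take, for a given $w\in\H^1$, the constant function $u\equiv w$, which lies in $\E_1$ because $\H^1\subset\L^2$ (indeed $\H^2\subset\H^1$, so $w\in L^2(0,T;\L^2)$ trivially — though to land in $L^2(0,T;\H^2)$ one instead uses $u(t)=e^{tA}w$ or the standard interpolation-theoretic extension operator, which is bounded $\H^1\to\E_1$).

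\emph{Part (b).} For the compact embedding $\E_1\hookrightarrow L^2(0,T;\L^2)$ I would apply the Aubin–Lions–Simon lemma: if $X_1\hookrightarrow\hookrightarrow Y\hookrightarrow X_0$ with the first embedding compact, then $H^1(0,T;X_0)\cap L^2(0,T;X_1)$ embeds compactly into $L^2(0,T;Y)$. Here I take $Y=\L^2=X_0$ itself (the embedding $\L^2\hookrightarrow\L^2$ is trivially continuous), so it suffices that $\H^2\hookrightarrow\hookrightarrow\L^2$ is compact. This in turn follows from the compactness of $H^2(\Omega)\hookrightarrow\hookrightarrow L^2(\Omega)$ and $H^2(\Gamma)\hookrightarrow\hookrightarrow L^2(\Gamma)$ (Rellich–Kondrachov on the bounded domain $\Omega$ and on the compact manifold $\Gamma$), since $\H^2$ is a closed subspace of $H^2(\Omega)\times H^2(\Gamma)$ and $\L^2=L^2(\Omega)\times L^2(\Gamma)$.

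The only real subtlety — and the step I would be most careful about — is the surjectivity of the trace in (a) together with the boundedness of a lifting $\H^1\to\E_1$: one must exhibit a concrete extension operator, e.g.\ $w\mapsto (t\mapsto e^{tA}w)$, and check using the analyticity of $(e^{tA})_{t\ge0}$ and $D(A^{1/2})=\H^1$ that it maps $\H^1$ boundedly into $H^1(0,T;\L^2)\cap L^2(0,T;\H^2)$; this is the standard maximal-regularity/interpolation computation $\int_0^T\|Ae^{tA}w\|_{\L^2}^2\,dt \le C\|A^{1/2}w\|_{\L^2}^2$ and $\int_0^T\|Ae^{tA}w\|_{\L^2}^2\,dt\ \text{estimates}\ \|\partial_t(e^{tA}w)\|_{L^2(0,T;\L^2)}$. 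Everything else is a direct citation of the abstract trace theorem and the Aubin–Lions lemma.
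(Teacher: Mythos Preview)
Your proposal is correct and follows essentially the same route as the paper: for (a) you invoke the abstract trace theorem (the paper cites Corollary~1.14 of \cite{Lunardi-Int}) together with the identification $(\L^2,\H^2)_{1/2,2}=\H^1$ from Proposition~\ref{prop:semigroup}, and for (b) you apply the Aubin--Lions lemma with the compact embedding $\H^2\hookrightarrow\hookrightarrow\L^2$. Your digression on surjectivity is more than the paper spells out (it is subsumed in the cited abstract result), and the aside about the constant lifting $u\equiv w$ is a false start you immediately correct; you could simply cite the abstract trace theorem for the existence of a bounded right inverse and omit that detour.
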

\begin{proof} (a) Corollary~1.14 of \cite{Lunardi-Int} implies that the trace space of $\E_1$ 
equals  $(\L^2, \H^2)_{1/2,2}$ and that $\E_1\hookrightarrow C([0,T]; (\L^2, \H^2)_{1/2,2}).$
 Thus the first assertion is a consequence of Proposition \ref{prop:semigroup}. 

(b) Since $\H^2\hookrightarrow H^2(\Omega)\times H^2(\Gamma)$ is compact in $\L^2$, the compactness of 
$\E_1\embed L^2(0,T;\L^2)$ follows from  the Aubin-Lions lemma, see e.g.\ Corollary~4 in \cite{Si}.
\end{proof}

\subsection{Existence, uniqueness and regularity of solutions}  \label{sec:well}
We are concerned with the following classes of solutions of \eqref{eq:4a}--\eqref{eq:6a}.

\begin{definition} \label{def:solutions}
Let $f\in L^2(\Omega_T)$,  $g\in L^2(\Gamma_T)$ and $y_0\in \L^2$. 
\begin{itemize}
\item[\text{(a)}] A \emph{strong solution of \eqref{eq:4a}--\eqref{eq:6a}} is a function $y\in \E_1$  
fulfilling \eqref{eq:4a}--\eqref{eq:6a} in $L^2(0,T;\L^2)$.
\item[\text{(b)}] A  \emph{mild solution of \eqref{eq:4a}--\eqref{eq:6a}} is a function 
$y\in C([0,T]; \L^2)$ satisfying
\begin{equation}\label{eq:mild}
 y(t) = e^{tA}y_0 + \int_0^t e^{(t-\tau)A}[f(\tau)-a(\tau)y(\tau), g(\tau)-b(\tau)y(\tau)]\,d\tau
      \qquad \text{in $\L^2$ \ for  $t\in [0,T]$}.
\end{equation}
\item[\text{(c)}] A \emph{distributional solution of \eqref{eq:4a}--\eqref{eq:6a}} is a function 
$y\in L^2(0,T;\L^2)$ such that for all $\varphi \in \E_1$ with $\varphi(T,\cdot) = 0$ we have
\begin{align}
 \int_{\Omega_T} y(-\partial_t \varphi - & d\Delta \varphi +a\varphi)\,dx \,dt  
  + \int_{\Gamma_T} y(-\partial_t\varphi- \delta \Delta_\Gamma \varphi + d\partial_\nu \varphi 
        + b\varphi)\,dS \,dt  \notag\\
 & = \int_{\Omega_T} f\varphi \,dx \,dt + \int_{\Gamma_T} g\varphi \,dS \,dt 
   + \int_\Omega y_0 \varphi(0,\cdot)\,dx + \int_\Gamma y_0 \varphi(0,\cdot) \,d S.\label{eq:distri}
\end{align}
\item[\text{(d)}] We call $y\in L^2(0,T;\L^2)$ a \emph{distributional solution of 
\eqref{eq:4a}--\eqref{eq:6a} with vanishing end value} if $y$ satisfies \eqref{eq:distri} for all 
$\varphi\in \E_1$.
\end{itemize}
\end{definition}

Since our controllability results rely on an observability estimate for a dual problem, we also 
have to look at the adjoint backward evolution equation
\begin{alignat}{2}
-\partial_t\varphi-d\Delta \varphi +a(t,x)\varphi & = f(t,x)&\qquad\quad 
    &\text{in }\Omega_T,\label{eq:1} \\
-\partial_t \varphi_\Gamma -\delta\Delta_\Gamma \varphi_\Gamma +d\partial_\nu \varphi + b(t,x)\varphi 
  & = g(t,x) &  &\text{on }\Gamma_T, \label{eq:2}\\
\varphi(T,\cdot) & = \varphi_T &  &\text{in }\overline{\Omega},\label{eq:3}
\end{alignat}
for given $\ph_T$ in $\H^1$ or in $\L^2$,  $f\in L^2(\Omega_T)$ and $g\in L^2(\Gamma_T)$.
As in Definition~\ref{def:solutions}, a \emph{strong solution of \eqref{eq:1}--\eqref{eq:3}} is 
a function $\ph\in \E_1$  fulfilling \eqref{eq:1}--\eqref{eq:3} in $L^2(0,T;\L^2)$, and  
a \emph{mild solution of \eqref{eq:1}--\eqref{eq:3}} is a function  $\ph\in C([0,T]; \L^2)$ satisfying
\begin{equation}\label{eq:mild-back}
\ph(t)=e^{(T-t)A}\ph_T+\int_t^T e^{(\tau-t)A}[f(\tau)-a(\tau)\ph(\tau),g(\tau)-\b(\tau)\ph(\tau)]\,d\tau    
        \qquad \text{for \  $t\in [0,T]$}.
\end{equation}
Considering \eqref{eq:4a}--\eqref{eq:6a} with the data $\hat{a}(t,x)=a(T-t,x)$, $\hat{b}(t,x)=b(T-t,x)$,
$\hat f(t,x)=f(T-t,x)$ and $\hat g(t,x)=g(T-t,x)$, one can  pass from  statements about 
\eqref{eq:4a}--\eqref{eq:6a}  to those about \eqref{eq:1}--\eqref{eq:3}, and vice versa,
by means of the transformation $t'= T-t$.  Hence, the following results on strong and mild solutions have 
straightforward analogues for the adjoint problem which can easily be proved by this  transformation.
We omit the details, but establish in Proposition~\ref{prop:mild}(f) a `solution formula' for homogenous
backward system which is crucial for our  main Theorem~\ref{thm:control-linear}.

We start with strong solutions of \eqref{eq:4a}--\eqref{eq:6a}. Proposition \ref{prop:E1}
implies that such a solution can only exist if $y_0\in  \H^1$, i.e., the initial data on $\Omega$ and $\Gamma$ is related by the trace.

\begin{proposition} \label{prop:MR} 
Let $f\in L^2(\Omega_T)$, $g\in L^2(\Gamma_T)$ and $y_0 \in \H^1.$ Then there exists a unique strong 
solution $y\in \E_1$ of \eqref{eq:4a}--\eqref{eq:6a}, which is also a mild solution. Given $R> 0$, 
there is a constant $C= C(R) >0$ such that for all $a$ and $b$ with $\|a\|_{\infty}, \|b\|_\infty\leq R$ and 
all data we have
\begin{equation}\label{eq:esti-2}
 \|y\|_{\E_1} \leq C \, (\|y_0\|_{\H^1} + \|f\|_{L^2(\Omega_T)} + \|g\|_{L^2(\Gamma_T)}).
\end{equation}

\end{proposition}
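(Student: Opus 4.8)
The plan is to realize problem \eqref{eq:4a}--\eqref{eq:6a} as an abstract inhomogeneous Cauchy problem $y' = Ay + F(t) - M(t)y$, $y(0) = y_0$, on the Hilbert space $\L^2$, where $A$ is the selfadjoint negative generator from Proposition~\ref{prop:semigroup}, $F(t) = (f(t,\cdot), g(t,\cdot)) \in \L^2$, and $M(t)(y,y_\Gamma) = (a(t,\cdot)y, b(t,\cdot)y_\Gamma)$ is the bounded (uniformly in $t$, with norm $\le R$) multiplication operator coming from the potential terms. Since $A$ is selfadjoint and negative on $\L^2$ with $D(A) = \H^2$ and $(\L^2, \H^2)_{1/2,2} = \H^1$, maximal $L^2$-regularity for $A$ is available (e.g. by the theorem of de Simon, or simply because $-A$ is a positive selfadjoint operator and one has the standard $L^2$ square-function estimate), which gives, for any $h \in L^2(0,T;\L^2)$ and $y_0 \in D(A^{1/2}) = \H^1$, a unique solution in $\E_1 = H^1(0,T;\L^2)\cap L^2(0,T;D(A))$ of $y' = Ay + h$, $y(0) = y_0$, together with the a priori bound
\[
 \|y\|_{\E_1} \le C_0\,\bigl(\|y_0\|_{\H^1} + \|h\|_{L^2(0,T;\L^2)}\bigr),
\]
with $C_0$ depending only on $A$ and $T$. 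Here I use Proposition~\ref{prop:E1}(a) to know that the trace space is $\H^1$ and that $\E_1 \hookrightarrow C([0,T];\H^1)$, so the initial condition makes sense.

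The next step is to incorporate the zeroth-order perturbation $M(t)$. The cleanest route is a fixed-point/continuation argument: define $\Phi: L^2(0,T;\L^2) \to L^2(0,T;\L^2)$ by letting $\Phi(u)$ be the $\E_1$-solution of $y' = Ay + F - M(\cdot)u$, $y(0) = y_0$; composing with the embedding $\E_1 \hookrightarrow L^2(0,T;\L^2)$ this is well defined, and since $\|M(t)u\|_{\L^2} \le R\|u\|_{\L^2}$, on a short interval $(0,T_1)$ with $C_0 R T_1^{1/2} < 1$ the map $\Phi$ is a strict contraction on $L^2(0,T_1;\L^2)$ (the $T_1^{1/2}$ gain comes from Hölder in time on $[0,T_1]$ applied to the constant-in-time operator norm bound, or from absorbing $\|M(\cdot)u\|_{L^2(0,T_1;\L^2)} \le R\sqrt{T_1}\,\|u\|_{L^\infty} $ via the embedding — in any case a power of $T_1$ is gained). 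The unique fixed point is the desired strong solution on $[0,T_1]$, and it is automatically a mild solution because the variation-of-constants formula \eqref{eq:mild} is just the integrated form of the abstract ODE for the analytic semigroup $(e^{tA})$. One then iterates over $[T_1, 2T_1], \dots$; since $T_1$ depends only on $R$ and $C_0$ (not on the data), finitely many steps cover $[0,T]$, using at each step the final value $y(kT_1) \in \H^1$ — guaranteed by $\E_1 \hookrightarrow C([0,T];\H^1)$ — as the new initial datum. Concatenating the pieces gives $y \in \E_1$ globally, and a Gronwall argument through the finitely many steps yields the estimate \eqref{eq:esti-2} with $C = C(R)$.

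Uniqueness follows from the same a priori bound: the difference of two strong solutions solves the homogeneous equation with zero initial datum, so on $(0,T_1)$ the contraction estimate forces it to vanish, and then one propagates. Alternatively, test the equation for the difference $w$ with $w$ itself, integrate by parts using the surface divergence theorem \eqref{eq:sdt} and the negativity of $A$ (i.e. $\langle Aw, w\rangle_{\L^2} \le 0$), to get $\frac{d}{dt}\|w(t)\|_{\L^2}^2 \le 2R\|w(t)\|_{\L^2}^2$ and conclude by Gronwall.

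The main obstacle is really just bookkeeping: making sure the constant $C_0$ in the maximal-regularity estimate and the contraction threshold $T_1$ depend only on $A$ (hence only on $\Omega, d, \delta, T$) and on $R$, and not on the individual coefficients $a, b$ or the data — this is what makes the iteration terminate after a number of steps controlled solely by $R$ and lets the final Gronwall constant be uniform. A secondary technical point is verifying maximal $L^2$-regularity for $A$ itself; but this is immediate from Proposition~\ref{prop:semigroup} since a negative selfadjoint operator on a Hilbert space has bounded $H^\infty$-calculus (indeed, the $L^2(0,T)$ maximal regularity estimate for $-A$ reduces via the spectral theorem to the scalar inequality $\|\,\lambda\int_0^t e^{-(t-s)\lambda} h(s)\,ds\,\|_{L^2(0,T)} \le C\|h\|_{L^2(0,T)}$ uniformly in $\lambda > 0$), so no new work is needed there.
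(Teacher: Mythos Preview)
Your approach is correct and parallels the paper's in spirit, though it is more self-contained. The paper invokes Theorem~3.1 of \cite{PS} as a black box for non-autonomous maximal regularity, obtaining existence, uniqueness and the $\E_1$-estimate in one stroke; it then shows separately that the constant is uniform in $\|a\|_\infty,\|b\|_\infty\le R$ by deriving an integral equation for the evolution family $S(t,s)$, applying Gronwall to get $\|S(t,s)\|\le C(R)$, and bootstrapping from the resulting $C([0,T];\L^2)$-bound to the $\E_1$-bound via the unperturbed ($a=b=0$) maximal regularity estimate. You instead derive autonomous maximal $L^2$-regularity directly from the selfadjointness of $A$ (de~Simon), handle the time-dependent potential $M(t)$ by a contraction on subintervals of length $T_1=T_1(R)$, and iterate. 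Both routes end in the same bootstrap; yours avoids the external reference at the cost of a little extra bookkeeping.

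One imprecision to fix: the $T_1^{1/2}$ gain you claim does not appear if you run the contraction in $L^2(0,T_1;\L^2)$ as written, since $\|M(\cdot)(u_1-u_2)\|_{L^2}\le R\|u_1-u_2\|_{L^2}$ carries no small factor. The gain only materializes if you work in $C([0,T_1];\L^2)$ via the mild formula (Lipschitz constant $RT_1$), or in the affine subspace $\{u\in\E_1(0,T_1):u(0)=y_0\}$, where the difference has zero initial trace and hence $\|u_1-u_2\|_{L^2}\le C\sqrt{T_1}\,\|u_1-u_2\|_{\E_1}$ with $C$ independent of $T_1$. Either repair is routine, and once the solution is constructed your bootstrap to the uniform $\E_1$-estimate goes through exactly as in the paper.
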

\begin{proof} 
 We set $F=(f,g)$ and $B(t)(\psi,\psi_\Gamma)= (a(t,\cdot)\psi,b(t,\cdot)\psi_\Gamma)$. Since $A$ is 
 selfadjoint and negative and $B(\cdot)$ is uniformly bounded, 
Theorem~3.1 of \cite{PS} yields the asserted unique solution $y\in \E_1$ of \eqref{eq:4a}--\eqref{eq:6a}
and the estimate \eqref{eq:esti-2}. Using that $y\in\E_1$ solves \eqref{eq:4a}--\eqref{eq:6a}, we compute
\begin{equation}\label{eq:VK0}
y(t)-e^{tA}y_0 = \int_0^t \partial_s [e^{(t-s)A}y(s)]\,ds
               = \int_0^t e^{(t-s)A}[-B(s) y(s)+F(s)]\,ds
\end{equation}
so that $y$ is also a mild solution of \eqref{eq:4a}--\eqref{eq:6a}. To obtain the asserted
uniformity of the constant in \eqref{eq:esti-2}, we note that Theorem~3.1 of \cite{PS} gives
 bounded linear operators $S(t,s)$  on $\L^2$ depending strongly continuously on  
 $0\le s\le t\le T$ such that
\begin{equation}\label{eq:VK1}
y(t)= S(t,0)y_0 +\int_0^t S(t,s)F(s)\,ds, \qquad t\in [0,T].
\end{equation}
Taking $F=0$ and varying the initial time, \eqref{eq:VK0} yields
\begin{equation}\label{eq:VK2} 
S(t,\tau)y_0= e^{(t-\tau)A}y_0 - \int_\tau^t  e^{(t-s)A}B(s)S(s,\tau)y_0\,ds
\end{equation}
at first for $y_0\in\H^1$ and then for $y_0\in\L^2$ by approximation. From Gronwall's
inquality we now deduce that $\|S(t,s)\|\le C=C(R)$ for all $0\le s\le t\le T$. 
Due to \eqref{eq:VK1}, the strong solution of \eqref{eq:4a}--\eqref{eq:6a} thus satisfies
\begin{equation}\label{est:l2}
\|y(t)\|_{\L^2} \le  C\, (\|y_0\|_{\L^2} + \|f\|_{L^2(\Omega_T)} + \|g\|_{L^2(\Gamma_T)})
\end{equation}
for all $t\in[0,T]$. We further write $\tilde F =By+F$ so that $\partial_t y = Ay + \tilde F$. 
The estimates \eqref{eq:esti-2} for $a=b=0$ and  \eqref{est:l2} finally yield $C=C(R)$ with
\begin{align*}
\|y\|_{\E_1}&\le C\, (\|y_0\|_{\H^1} + \|\tilde F\|_{L^2(0,T;\mathbb L^2)})\\
&\le C\, (\|y_0\|_{\H^1} + \|f\|_{L^2(\Omega_T)} + \|g\|_{L^2(\Gamma_T)}
                  +R\, \|y\|_{L^2(0,T;\L^2)})\\
       &\le C \, (\|y_0\|_{\H^1} + \|f\|_{L^2(\Omega_T)} + \|g\|_{L^2(\Gamma_T)}). \qedhere
\end{align*}
\end{proof}

We next consider mild and distributional solutions for initial data in $\L^2$.

\begin{proposition} \label{prop:mild} Let $f\in L^2(\Omega_T)$, $g\in L^2(\Gamma_T)$ and 
$y_0,\ph_T\in \L^2$. Then the following holds true.
\begin{itemize}
 \item[\text{(a)}] There is a unique mild solution $y\in C([0,T];\L^2)$ of \eqref{eq:4a}--\eqref{eq:6a}. 
 The solution map $(y_0,f,g)\mapsto y$ is linear and continuous from $\L^2\times L^2(\Omega_T)\times 
 L^2(\Gamma_T)$ to $C([0,T]; \L^2)$. Moreover, $y$ belongs to $\E_1(\tau,T)$ and 
 solves \eqref{eq:4a}--\eqref{eq:6a} strongly on $(\tau,T)$  with initial $y(\tau)$, for each $\tau\in(0,T)$. 
 There are bounded linear operators $S(t,s)$ on $\L^2$ depending strongly continuous on $0\le s\le t\le T$ 
 such that
\begin{equation}\label{eq:VK}
y(t)= S(t,0)y_0 +\int_0^t S(t,s)(f(s),g(s))\,ds, \qquad t\in[0,T].
\end{equation}
\item[\text{(b)}] Given $R> 0$, there is a constant $C= C(R) >0$ such that for all $a$ and $b$ 
with $\|a\|_{\infty}, \|b\|_\infty\leq R$ and all data the mild solution of $y$ of \eqref{eq:4a}--\eqref{eq:6a} satisfies
\begin{equation}\label{eq:esti-1}
 \|y\|_{C([0,T];\L^2)} \leq C (\|y_0\|_{\L^2} + \|f\|_{L^2(\Omega_T)} + \|g\|_{L^2(\Gamma_T)}).
\end{equation}
 \item[\text{(c)}] If $y_0\in \H^1$, then the mild solution from (a) is the strong one.
 \item[\text{(d)}] A function $y$ is a distributional solution of \eqref{eq:4a}--\eqref{eq:6a} 
 if and only if it is a mild solution.
  \item[\text{(e)}] A distributional solution of $y$ \eqref{eq:4a}--\eqref{eq:6a} with vanishing end value 
  satisfies $y(T,\cdot) = 0$.
\item[\text{(f)}] The backward problem \eqref{eq:1}--\eqref{eq:3} with $f=g=0$  
has the unique mild solution given by $\ph(t)=S(T,t)^*\ph_T$, where $S(T,t)^*$ denotes the adjoint of $S(T,t)$. It is the unique strong solution 
if $\ph_T\in \H^1$.  
\end{itemize}
\end{proposition}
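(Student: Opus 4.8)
The plan is to build up the six claims in the natural order, obtaining the mild solution by a contraction argument, identifying it with strong and distributional solutions by approximation, and finally dualizing the linear part to get the formula for the homogeneous backward problem in (f).

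For (a) and (b), I would first treat the homogeneous potential case $a=b=0$: by Proposition~\ref{prop:semigroup} the semigroup $(e^{tA})_{t\ge0}$ is analytic and contractive, so the variation of constants integral in \eqref{eq:mild} defines, for fixed data, a contraction on $C([0,\tau_0];\L^2)$ for $\tau_0$ small (the map is Lipschitz in $y$ with constant $\le \tau_0(\|a\|_\infty+\|b\|_\infty)$), and iterating over $[0,T]$ gives the unique mild solution $y\in C([0,T];\L^2)$; the linearity and continuity of $(y_0,f,g)\mapsto y$ and the bound \eqref{eq:esti-1} with $C=C(R)$ follow from Gronwall applied to $\|y(t)\|_{\L^2}\le \|y_0\|_{\L^2}+\int_0^t(\|f(s)\|+\|g(s)\|)\,ds+R\int_0^t\|y(s)\|\,ds$. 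For the interior regularity claim, fix $\tau\in(0,T)$: by analyticity $e^{\tau A}y_0\in D(A)\subset\H^1$, and $s\mapsto(f(s),g(s))-B(s)y(s)$ is in $L^2(\tau,T;\L^2)$ since $y\in C([0,T];\L^2)$, so $y|_{[\tau,T]}$ is the mild solution of \eqref{eq:4a}--\eqref{eq:6a} on $(\tau,T)$ with $\H^1$ initial value $y(\tau)$; by Proposition~\ref{prop:MR} this mild solution is the strong solution in $\E_1(\tau,T)$. The operators $S(t,s)$ are defined exactly as in \eqref{eq:VK1}--\eqref{eq:VK2} of the proof of Proposition~\ref{prop:MR}, now extended to all $y_0\in\L^2$, which gives \eqref{eq:VK}.

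Claim (c) is immediate from Proposition~\ref{prop:MR} together with the uniqueness of mild solutions in (a): if $y_0\in\H^1$ the strong solution of Proposition~\ref{prop:MR} is also a mild solution, hence equals the mild solution from (a). For (d), the implication ``mild $\Rightarrow$ distributional'' is a direct computation: plug $y(t)=e^{tA}y_0+\int_0^t e^{(t-s)A}[(f,g)-B(s)y(s)]\,ds$ into the right-hand side of \eqref{eq:distri}, use Fubini and the fact that for $\ph\in\E_1$ with $\ph(T,\cdot)=0$ one has, by self-adjointness of $A$ and \eqref{eq:sdt} (integration by parts in space, using $\H^2=D(A)$), the identity $\int_0^T\langle e^{(t-s)A}z,(-\partial_t\ph-A^*\ph)(t)+\text{potential terms}\rangle\,dt$ telescoping to $\langle z,\ph(s)\rangle$; the initial terms arise from $\ph(0,\cdot)$. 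Conversely, a distributional solution $y\in L^2(0,T;\L^2)$ is unique — if $y$ solves \eqref{eq:distri} with zero data then testing against $\ph$ the strong solution of the backward problem \eqref{eq:1}--\eqref{eq:3} with arbitrary $L^2$ right-hand side and $\ph_T=0$ (which exists by the analogue of Proposition~\ref{prop:MR}, and satisfies $\ph\in\E_1$, $\ph(T,\cdot)=0$) forces $\int_{\Omega_T}y h_1+\int_{\Gamma_T}y h_2=0$ for all $(h_1,h_2)$, so $y=0$ — and since the mild solution is one distributional solution, the two notions coincide. For (e), the same test-function computation but now allowing $\ph\in\E_1$ with $\ph(T,\cdot)$ arbitrary produces the extra term $\langle y(T,\cdot),\ph(T,\cdot)\rangle_{\L^2}$; comparing with \eqref{eq:distri} (which by definition of ``vanishing end value'' holds for all such $\ph$) shows $\langle y(T,\cdot),\ph(T,\cdot)\rangle_{\L^2}=0$ for all $\ph(T,\cdot)\in\H^1$, which is dense in $\L^2$, hence $y(T,\cdot)=0$.

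Finally, for (f): with $f=g=0$, the forward solution operator is $y(t)=S(t,0)y_0$, and the backward problem \eqref{eq:1}--\eqref{eq:3} with $f=g=0$ is obtained from the forward one by the time reversal $t'=T-t$ described before Proposition~\ref{prop:MR}, under which the coefficients become $\hat a,\hat b$; writing $\hat S(t,s)$ for the corresponding forward operators, the mild solution of \eqref{eq:1}--\eqref{eq:3} is $\ph(t)=\hat S(T-t,0)\ph_T$. It remains to check $\hat S(T-t,0)=S(T,t)^*$, which follows by differentiating $\langle S(t,\tau)u,\hat S(T-\tau,T-t)v\rangle_{\L^2}$ in $\tau\in[\tau_0,t]$: using \eqref{eq:VK2} for both families and the self-adjointness of $A$, the derivative vanishes, so the quantity is constant in $\tau$, and evaluating at $\tau=T-t$ versus $\tau=t$ (or more cleanly at the endpoints) gives $\langle S(t,\tau)u,v\rangle=\langle u,\hat S(T-\tau,T-t)v\rangle$; specializing $\tau=0$, $t\rightsquigarrow T$ then $t\rightsquigarrow t$ yields the claim. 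The strong-solution statement for $\ph_T\in\H^1$ is then the backward analogue of Proposition~\ref{prop:MR}. The main obstacle is the bookkeeping in (d) and (e): one must justify the integration by parts in time and space (legitimate because $\ph\in\E_1\hookrightarrow C([0,T];\H^1)$ and $D(A)=\H^2$, so the boundary term $d\partial_\nu$ in \eqref{eq:5a} is exactly what the surface divergence theorem \eqref{eq:sdt} produces) and track the interchange of integration carefully so that the initial/final traces land with the correct sign.
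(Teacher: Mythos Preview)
Your treatment of parts (a)--(e) follows essentially the same strategy as the paper; the paper leans on Theorem~3.1 of \cite{PS} for the evolution family and the interior regularity, whereas you produce these by hand via contraction and Gronwall, which is fine. One small gap: to conclude $y(\tau)\in\H^1$ you mention only $e^{\tau A}y_0\in D(A)$, but the integral contribution $\int_0^\tau e^{(\tau-s)A}[(f,g)-B(s)y(s)]\,ds$ also has to land in $\H^1$. This follows from maximal regularity (Proposition~\ref{prop:MR} with $a=b=0$ and zero initial data gives $\E_1(0,\tau)\hookrightarrow C([0,\tau];\H^1)$), but you should say so.

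For (f) your approach differs from the paper's, and the specific pairing you wrote does not work. The paper takes adjoints of the integral identity $S(T,t)y_0=e^{(T-t)A}y_0-\int_t^T S(T,\tau)B(\tau)e^{(\tau-t)A}y_0\,d\tau$ (this is equation~(3.4) of \cite{PS}); self-adjointness of $A$ and $B(\tau)$ then shows directly that $S(T,\cdot)^*\ph_T$ satisfies \eqref{eq:mild-back} with $f=g=0$, hence is the mild solution by uniqueness. Your idea of differentiating a bilinear pairing is a legitimate alternative, but the function $\tau\mapsto\langle S(t,\tau)u,\hat S(T-\tau,T-t)v\rangle$ has derivative that does \emph{not} vanish: both factors pick up a minus sign under $\partial_\tau$ (from $\partial_\tau S(t,\tau)=-S(t,\tau)(A-B(\tau))$ and $\partial_\tau\hat S(T-\tau,T-t)=-(A-B(\tau))\hat S(T-\tau,T-t)$), so the two terms add rather than cancel, and evaluating at the endpoints gives the wrong identity. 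The pairing that works is
\[
h(r)=\langle S(r,t)u,\hat S(T-r,0)v\rangle_{\L^2},\qquad r\in[t,T],
\]
for which $h'(r)=\langle (A-B(r))S(r,t)u,\hat S(T-r,0)v\rangle-\langle S(r,t)u,(A-B(r))\hat S(T-r,0)v\rangle=0$ by self-adjointness; then $h(t)=\langle u,\hat S(T-t,0)v\rangle$ and $h(T)=\langle S(T,t)u,v\rangle$ yield $S(T,t)^*=\hat S(T-t,0)$ as desired (first for $u,v\in\H^1$ so the derivatives are justified, then by density).
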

\begin{proof} 
(a)--(c). A straightforward Gronwall estimate shows uniqueness of mild solutions. Theorem~3.1 of \cite{PS} 
gives the desired operators $S(t,s)$ satisfying  \eqref{eq:VK}.  Above we have seen that these operators 
also fulfill the integral equation  \eqref{eq:VK2}. Inserting  \eqref{eq:VK2} into  \eqref{eq:VK} and 
interchanging integrals, we see that the function given by  \eqref{eq:VK} is a mild solution. The linearity 
and continuity of the solution map follows from \eqref{eq:VK} and the asserted regularity of $y$ is shown 
in Theorem~3.1 of \cite{PS}. Part (b) can be checked as in \eqref{est:l2} above, and (c) is a consequence
of Proposition~\ref{prop:MR} and the uniqueness in assertion (a).

(d)  Let $F\in L^2(0,T;\L^2)$ and $y_0\in\H^1$. The function $v$ defined by 
$v(t) = e^{tA}y_0+\int_0^t e^{(t-s)A}F(s)\,ds$ belongs to $\E_1$ and satisfies $\partial_t v = Av + F$ on 
$(0,T)$, due to Proposition~\ref{prop:MR} with $a=b=0$. Using the self-adjointness of $A$ and integration by 
parts in time, we thus obtain
\begin{align}
 \int_0^T \Big\langle e^{tA}y_0 + \int_0^t e^{(t-s)A} F(s)&\,ds, -\partial_t\varphi(t)  
     -A\varphi(t)\Big \rangle_{\L^2} \,d t\notag\\
 & = \int_0^T \langle F(t),\varphi(t)\rangle_{\L^2} \, dt 
              + \langle y_0, \varphi(0)\rangle_{\L^2}\label{m674}
\end{align} 
for all $\varphi\in \E_1$ with $\varphi(T) = 0$. By approximation, this identity also holds for $y_0\in \L^2$.
Now, let $y\in C([0,T];\L^2)$ be the mild solution of \eqref{eq:4a}--\eqref{eq:6a}. With the notation of the 
proof of Proposition~\ref{prop:MR} we have
\[ y(t) = e^{tA} y_0 + \int_0^t e^{(t-s)A}[(f(s),g(s))-B(s)y(s)]\,ds,\] 
and thus \eqref{m674} with $F = (f,g)$ implies that 
\begin{align*}
 \int_0^T \langle y(t), -\partial_t\varphi(t)-A\varphi(t)\rangle_{\L^2}dt 
     + \int_0^T&\, \Big \langle \int_0^t e^{(t-s)A} B(s)y(s)\,ds, - \partial_t\varphi(t)-A\varphi(t)\Big 
       \rangle_{\L^2}\,d t\\
 &= \int_0^T \langle F(t),\varphi(t)\rangle_{\L^2} \, dt + \langle y_0, \varphi(0)\rangle_{\L^2}
\end{align*}
for each $\varphi$ as above.
Another application of \eqref{m674} with $y_0 = 0$ and $F = By$ yields that here the second term on 
the left-hand side equals $\int_0^T \langle B y(t), \varphi(t)\rangle_{\L^2}\, dt 
= \int_0^T \langle y(t), B \varphi(t)\rangle_{\L^2}\, dt$. Hence, the mild solution $y$ is a 
distributional solution.

To prove that a distributional solution is already the mild one, we show uniqueness of distributional 
solutions. Let $y,z\in L^2(0,T;\L^2)$ be such solutions. We then obtain
$$\langle y-z, -\partial_t \varphi-A\varphi +B\varphi\rangle_{L^2(0,T;\L^2)} = 0$$
for all $\varphi\in \E_1$ with $\varphi(T) = 0$. The version of Proposition~\ref{prop:MR} 
for the backward problem \eqref{eq:1}--\eqref{eq:3} says that for every $\psi\in L^2(0,T;\L^2)$ there 
is a function $\varphi\in \E_1$ with $\varphi(T) = 0$ such that
 $\psi = -\varphi' - A\varphi +B\varphi$. Therefore $y = z$, and uniqueness follows.

(e) Let $y$ be a distributional solution with vanishing end value. Parts (a) and (d) yield 
$y\in \E_1(\tau,T)$  and $\partial_t y = Ay-By + (f,g)$ on $(\tau,T)$, for all $\tau\in (0,T)$. 
Let $\varphi \in \E_1$ be supported in $(0,T]$. Integrating by parts and using the self-adjointness 
of $A$ and $B(t)$, we then derive
\begin{align*}
\langle y(T),\varphi(T)\rangle_{\L^2} 
  &= \int_0^T(\langle\partial_t y(t),\varphi(t)\rangle_{\L^2}
           + \langle y(t), \partial_t\varphi(t))\rangle_{\L^2})\,d t \\
  &= \int_0^T\langle y(t), \partial_t\varphi(t) +A\varphi(t)-B(t)\varphi(t)\rangle_{\L^2}\,d t 
   + \int_0^T \langle (f(t), g(t)),\varphi(t)\rangle_{\L^2} \,d t = 0
\end{align*}
from \eqref{eq:distri}.
Since every $\psi \in \H^2$ can be represented as $\psi = \varphi(T)$ with a function $\varphi$ 
as above  and $\H^2$ is dense in $\L^2$, we conclude that $y(T) = 0$.

(f) As explained before Proposition~\ref{prop:MR}, we can derive from our previous results
that the backward problem \eqref{eq:1}--\eqref{eq:3} with $f=g=0$  and $\ph_T\in \L^2$ has a unique 
mild solution $\ph$ which is the unique strong solution if $\ph_T\in \H^1$. Equation (3.4) in 
the proof of Theorem~3.1 in \cite{PS} further gives the formula
\[S(T,t)y_0= e^{(T-t)A}y_0 - \int_t^T S(T,\tau)B(\tau)e^{(\tau-t)A}y_0\,d\tau\]
in $\L^2$ for $t\in[0,T]$. Taking adjoints and using the self-adjointness of $A$ and $B(\tau)$, we derive
\[S(T,t)^* \ph_T= e^{(T-t)A}\ph_T - \int_t^Te^{(\tau-t)A}B(\tau) S(T,\tau)^*\ph_T\,d\tau\]
for $\ph_T\in \L^2$. In view of \eqref{eq:mild-back}, $S(T,\cdot)^* \ph_T$ is the mild solution of 
\eqref{eq:1}--\eqref{eq:3} with $f=g=0$.
\end{proof}

\section{The Carleman estimate}\label{sec:carleman}
In this section we prove a Carleman estimate for the backward adjoint linear problem \eqref{eq:1}--\eqref{eq:3}, which is the key to null controllability in the linear and semilinear case. The weights appearing in the Carleman 
estimate are the same as in \cite{FCG} for the case of Dirichlet boundary conditions. They are based 
on the following auxiliary function $\eta^0$, see Lemma~1.2 of \cite{FCG} and the references given there.

\begin{lemma}\label{lem:eta} Given a nonempty open set $\omega' \Subset  \Omega$, there is a function 
$\eta^0\in C^2(\overline{\Omega})$ such that 
$$\eta^0 > 0 \;\;\emph{\text{ in }}\Omega, \qquad \eta^0 = 0 \;\;\emph{\text{ on }}\Gamma, 
\qquad |\nabla \eta^0| > 0 \;\;\emph{\text{ in }}\overline{\Omega\backslash \omega'}.$$
\end{lemma}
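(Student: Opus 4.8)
The plan is to construct $\eta^0$ essentially as in the classical Fursikov–Imanuvilov construction, adapting it so that the extra vanishing condition $\eta^0 = 0$ on $\Gamma$ holds. First I would fix an auxiliary open set $\omega''$ with $\omega' \Subset \omega'' \Subset \Omega$, so that there is room between $\overline{\omega'}$ and the region where we will later insist the gradient is nonvanishing. The core of the argument is the standard fact (see Lemma~1.1 of \cite{FCG} and the references there) that on the bounded domain $\Omega$ with smooth boundary one can find $\psi \in C^2(\overline{\Omega})$ with $\psi > 0$ in $\Omega$, $\psi = 0$ on $\Gamma$, and $|\nabla\psi| > 0$ on $\overline{\Omega \setminus \omega''}$; indeed $\psi$ can be taken as a suitably regularized and rescaled version of the distance function to $\Gamma$ away from a neighborhood of the cut locus, with Morse-theoretic modifications near the finitely many critical points, all of which can be pushed into $\omega''$. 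Note that the boundary condition $\psi=0$ on $\Gamma$ is automatic here since one is literally using a function comparable to $\mathrm{dist}(\cdot,\Gamma)$.

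The only subtlety relative to the Dirichlet situation is that we need $|\nabla\eta^0| > 0$ on all of $\overline{\Omega \setminus \omega'}$, including points of $\Gamma$; but this is already built in, because $\psi$ behaves like the distance to the boundary near $\Gamma$, so $\nabla\psi$ is (a nonzero multiple of) the inward normal there and in particular does not vanish on $\Gamma \subset \overline{\Omega\setminus\omega'}$. Thus I would simply set $\eta^0 := \psi$ with $\omega''$ in place of $\omega'$ in the construction, and since $\omega' \Subset \omega''$ we get $\overline{\Omega\setminus\omega'} \supset \overline{\Omega\setminus\omega''}$ reversed — more precisely $\overline{\Omega \setminus \omega''} \subset \overline{\Omega\setminus\omega'}$ is the wrong inclusion, so instead I would run the construction directly with $\omega'$ itself: find $\psi$ with the three properties where the nonvanishing holds on $\overline{\Omega\setminus\omega'}$. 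The point of the intermediate $\omega''$ is only that the critical points of a generic smoothing of the distance function can be relocated into any prescribed nonempty open subset of $\Omega$, so choosing that subset to be $\omega'$ suffices.

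The main obstacle — and it is a mild one — is carefully justifying that the critical points of the regularized distance function can be confined to $\omega'$. The standard way around this is not to perturb the distance function globally but to use the following classical trick: first build a function with the nonvanishing gradient property on $\overline{\Omega\setminus\omega'_0}$ for some very small ball $\omega'_0 \Subset \omega'$ (e.g. via Morse theory / transversality, or by an explicit gluing of finitely many local pieces), and then one already has what is needed since $\omega'_0 \subset \omega'$ gives $\overline{\Omega\setminus\omega'} \subset \overline{\Omega\setminus\omega'_0}$, hence the gradient is nonzero on $\overline{\Omega\setminus\omega'}$ as well. Because this lemma is quoted verbatim from \cite{FCG} (their Lemma~1.2) and the boundary behavior $\eta^0 = 0$, $|\nabla\eta^0|\ne 0$ on $\Gamma$ comes for free from the distance-function model, I would present the proof as a short reduction to that reference rather than reproving the Morse-theoretic statement. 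Concretely: invoke the cited construction to obtain $\eta^0$, observe $\eta^0 \in C^2(\overline\Omega)$, $\eta^0>0$ in $\Omega$, $\eta^0=0$ on $\Gamma$, and that near $\Gamma$ the function agrees (up to a smooth positive factor and a smooth correction vanishing to second order at $\Gamma$) with $\mathrm{dist}(\cdot,\Gamma)$, whence $\partial_\nu \eta^0 < 0$ on $\Gamma$ and so $|\nabla\eta^0|>0$ on $\Gamma$; combined with $|\nabla\eta^0|>0$ on $\overline{\Omega\setminus\omega'}\cap\Omega$ from the construction, this gives $|\nabla\eta^0|>0$ on all of $\overline{\Omega\setminus\omega'}$, completing the proof.
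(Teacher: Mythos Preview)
Your proposal is correct and matches the paper's treatment: the paper does not prove this lemma either but simply invokes Lemma~1.2 of \cite{FCG} and the references there, exactly as you suggest doing. Your additional remarks about the distance-function model and the boundary behavior $\partial_\nu\eta^0<0$ on $\Gamma$ are accurate elaborations (and indeed the paper uses precisely this consequence in \eqref{eq:eta}), though the detour through the auxiliary set $\omega''$ is unnecessary and could be trimmed.
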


Since $|\nabla \eta^0|^2 = |\nabla_\Gamma \eta^0|^2 + |\partial_\nu \eta^0|^2$ on $\Gamma$, 
the function $\eta^0$ in the lemma satisfies
\begin{equation}\label{eq:eta}
\nabla_\Gamma \eta^0 = 0, \qquad|\nabla \eta^0| =|\partial_\nu \eta^0|, \qquad 
  \partial_\nu \eta^0 \leq -c <0 \qquad \text{on }\Gamma,
\end{equation}
for some constant $c>0$.

Given $\omega' \Subset \Omega$, we take $\lambda, m>1$ and $\eta^0$ with respect to $\omega'$ as in  Lemma \ref{lem:eta}. Following \cite{FCG}, we then define the weight functions $\alpha$ and $\xi$  by
\begin{align*}
\alpha(x,t) &= (t(T-t))^{-1} \big( e^{2\lambda m \|\eta^0\|_\infty} 
       - e^{\lambda(m\|\eta^0\|_\infty+  \eta^0(x))}\big),\\
\xi(x,t) &= (t(T-t))^{-1} e^{\lambda(m\|\eta^0\|_\infty+ \eta^0(x))},
\end{align*}
for $x\in \overline{\Omega}$ and $t\in [0,T]$. Note that $\alpha$ and $\xi$ are $C^2$ and strictly 
positive on $(0,T)\times\overline{\Omega}$ and blow up as $t\to 0$ and as $t\to T$. Moreover, the 
weights are constant on the boundary $\Gamma$ so that
\begin{equation}\label{eq:xi1}
\nabla_\Gamma \alpha = 0 \qquad \text{and} \qquad \nabla_\Gamma \xi= 0 \qquad \text{on }\Gamma.
\end{equation}

We state the Carleman estimate.
\begin{lemma}\label{lem:carleman} 
Let $T>0$, $\omega \Subset  \Omega$ be nonempty and open, $d,\delta>0$, $a\in L^\infty(Q_T)$ and 
$b\in L^\infty(\Gamma_T)$. Choose a nonempty open set 
$\omega' \Subset \omega$. Define $\eta^0$, $\alpha$ and $\xi$ as above with respect to $\omega'$. 
Then there are constants $C>0$ and $\lambda_1,s_1 \ge 1$ such that 
\begin{align*}
 s^{-1} \int_{\Omega_T} &e^{-2s\alpha} \xi^{-1} (|\partial_t\varphi|^2 + |\Delta \varphi|^2)\,dx\,dt 
  + s^{-1}\int_{\Gamma_T}e^{-2s\alpha}\xi^{-1}(|\partial_t \varphi|^2 
   + |\Delta_\Gamma \varphi|^2)\,dS\,dt \\
  & \qquad + s\lambda^2 \int_{\Omega_T} e^{-2s\alpha} \xi |\nabla \varphi|^2 \,dx\,dt + s\lambda \int_{\Gamma_T} e^{-2s\alpha} \xi |\nabla_\Gamma \varphi|^2 \,dS\,dt \\
 &  \qquad + s^3\lambda^4\int_{\Omega_T} e^{-2s\alpha} \xi^3 |\varphi|^2 \, dx\,dt +  s^3\lambda^3 \int_{\Gamma_T} e^{-2s\alpha} \xi^3 |\varphi|^2 \, dS\,dt \\
  & \qquad + s\lambda \int_{\Gamma_T} e^{-2s\alpha} \xi|\partial_\nu \varphi|^2 \,dS\,dt  \\
&\leq C s^3\lambda^4 \int_{\omega_T} e^{-2s\alpha} \xi^3 |\varphi|^2 \, dx\,dt 
 + C \int_{\Omega_T} e^{-2s\alpha} |\partial_t\varphi+ d\Delta \varphi - a\varphi|^2 \,dx \,dt  \\
  &\qquad + C \int_{\Gamma_T} e^{-2s\alpha} |\partial_t\varphi + \delta \Delta_\Gamma \varphi 
    - d \partial_\nu \varphi- b\varphi|^2 \,dS\,dt
\end{align*}
for all $\lambda \geq \lambda_1$, $s \geq s_1$ and $\varphi\in \E_1$. Given $R > 0$, the constant $C=C(R)$ can be chosen independently of all $a,b$ with $\|a\|_\infty,\|b\|_\infty\le R$.
\end{lemma}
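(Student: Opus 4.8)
The plan is to follow the classical Fursikov--Imanuvilov strategy as presented in \cite{FCG} for the Dirichlet case, conjugating the operator by the Carleman weight and carefully tracking all the extra boundary terms produced by the dynamic boundary condition. Concretely, I would first reduce to the case $a=b=0$: writing the right-hand sides $f_\Omega = \partial_t\varphi + d\Delta\varphi - a\varphi$ and $f_\Gamma = \partial_t\varphi+\delta\Delta_\Gamma\varphi - d\partial_\nu\varphi - b\varphi$, one absorbs $a\varphi$ and $b\varphi$ into the last two terms on the right-hand side at the cost of adding $C\|a\|_\infty^2\int e^{-2s\alpha}|\varphi|^2$ and $C\|b\|_\infty^2\int_{\Gamma_T}e^{-2s\alpha}|\varphi|^2$, which for $s$ large are dominated by the $s^3\lambda^4$- and $s^3\lambda^3$-terms on the left. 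This also explains the claimed uniformity of $C=C(R)$.

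Next, set $\psi = e^{-s\alpha}\varphi$ so that $\varphi = e^{s\alpha}\psi$ and $\psi$ vanishes (together with the relevant derivatives) as $t\to 0,T$. A computation splits the conjugated operator $e^{-s\alpha}(\partial_t + d\Delta)(e^{s\alpha}\,\cdot\,)$ into a "symmetric" part $P_1\psi$ and a "skew" part $P_2\psi$ plus a remainder, and the basic identity is $\|P_1\psi\|_{L^2(\Omega_T)}^2 + \|P_2\psi\|_{L^2(\Omega_T)}^2 + 2\langle P_1\psi,P_2\psi\rangle_{L^2(\Omega_T)} = \|e^{-s\alpha}f_\Omega + R_\Omega\psi\|^2$. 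The main work is to expand the cross term $2\langle P_1\psi,P_2\psi\rangle$ by integration by parts in $x$ and $t$; using \eqref{eq:xi1} (the weights are constant on $\Gamma$, so $\nabla_\Gamma\alpha=\nabla_\Gamma\xi=0$) and the Pohozaev-type identities from \cite{FCG}, the interior integrals reproduce, for $\lambda$ and $s$ large, the positive bulk terms $s^3\lambda^4\int e^{-2s\alpha}\xi^3|\psi|^2 + s\lambda^2\int e^{-2s\alpha}\xi|\nabla\psi|^2$, while every integration by parts in space throws a boundary integral on $\Gamma_T$. Simultaneously one runs the \emph{same} conjugation for the surface operator $e^{-s\alpha}(\partial_t+\delta\Delta_\Gamma)(e^{s\alpha}\,\cdot\,)$ on $\Gamma_T$ — here the surface divergence theorem \eqref{eq:sdt} and the equivalent norms from Section~\ref{sec:beltrami} play the role of the interior integration by parts — producing the positive surface terms $s^3\lambda^3\int_{\Gamma_T}e^{-2s\alpha}\xi^3|\psi|^2 + s\lambda\int_{\Gamma_T}e^{-2s\alpha}\xi|\nabla_\Gamma\psi|^2$ and a boundary contribution involving $\partial_t\psi$ on $\Gamma$.

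The delicate point, and the main obstacle, is the bookkeeping of the boundary terms on $\Gamma_T$ coming from both conjugations. Using the dynamic boundary condition to substitute $d\partial_\nu\varphi = f_\Gamma - \partial_t\varphi_\Gamma - \delta\Delta_\Gamma\varphi_\Gamma + b\varphi$, several of the worst terms (in particular those with $\partial_t$ falling on the trace) cancel between the bulk- and surface-identities, a few survive and are exactly the good terms $s\lambda\int_{\Gamma_T}e^{-2s\alpha}\xi|\partial_\nu\varphi|^2$ and $s^{-1}\int_{\Gamma_T}e^{-2s\alpha}\xi^{-1}|\Delta_\Gamma\varphi|^2$ claimed on the left, and the remaining ones are of lower order in $s$ and $\lambda$ (and in the $H^s(\Gamma)$-scale) and get absorbed once $s,\lambda$ are large enough, exploiting the smoothing of the surface diffusion and interpolation \eqref{eq:interpol} — this is precisely the step that fails when $\delta=0$, as noted in the remark. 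Once the cross term is controlled, one recovers the $\partial_t\varphi,\Delta\varphi$ and $\partial_t\varphi,\Delta_\Gamma\varphi$ terms from $\|P_1\psi\|^2$ and the equations, and finally passes from the localized gradient bound to the stated form by the usual cut-off argument: choose $\omega'\Subset\omega$, a cut-off $\theta\in C_c^\infty(\omega)$ with $\theta\equiv 1$ on $\omega'$, test the interior equation against $s\lambda^2\theta^2\xi e^{-2s\alpha}\varphi$, and absorb the interior gradient term on a neighborhood of $\overline{\Omega\setminus\omega'}$ (where $|\nabla\eta^0|>0$) to replace it by the observation integral $s^3\lambda^4\int_{\omega_T}e^{-2s\alpha}\xi^3|\varphi|^2$. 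Translating back from $\psi$ to $\varphi$, absorbing the remainder terms $R_\Omega\psi,R_\Gamma\psi$ (which are of lower order in $s$) and the potential terms finishes the proof; the density of $\E_1$-functions and the a priori regularity from Proposition~\ref{prop:MR} justify all integrations by parts for general $\varphi\in\E_1$.
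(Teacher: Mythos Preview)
Your overall strategy matches the paper's: conjugate by $e^{-s\alpha}$, split into two pieces, expand the cross term, and track boundary contributions. The reduction to $a=b=0$, the cut-off argument to eliminate the local gradient term, and the use of interpolation on $\Gamma$ to absorb the bad tangential-gradient terms (the step that genuinely needs $\delta>0$) are all correct and correspond to what the paper does.

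There is, however, a real gap in your account of where the positive \emph{boundary} terms come from. You write that ``running the same conjugation for the surface operator $e^{-s\alpha}(\partial_t+\delta\Delta_\Gamma)(e^{s\alpha}\,\cdot\,)$'' produces $s^3\lambda^3\int_{\Gamma_T}\xi^3|\psi|^2$ and $s\lambda\int_{\Gamma_T}\xi|\nabla_\Gamma\psi|^2$. This cannot work: since $\nabla_\Gamma\alpha=0$ on $\Gamma$ (your own observation), the tangential conjugation is trivial, $e^{-s\alpha}\Delta_\Gamma(e^{s\alpha}\psi)=\Delta_\Gamma\psi$, and the only extra term is $s(\partial_t\alpha)\psi$. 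No powers $s^3\lambda^3$ or $s\lambda$ are generated by the surface cross term alone. In the paper the dominant positive boundary term $s^3\lambda^3\int_{\Gamma_T}\xi^3\psi^2$ is the trace contribution $-s^3\lambda^3\int_{\Gamma_T}|\partial_\nu\eta^0|^2\,\partial_\nu\eta^0\,\xi^3\psi^2$ arising from integration by parts of the \emph{bulk} cross term (note the sign from $\partial_\nu\eta^0<0$), and the positive $s\lambda\int_{\Gamma_T}\xi|\partial_\nu\psi|^2$ similarly appears as a bulk boundary trace. The positive $s\lambda\int_{\Gamma_T}\xi|\nabla_\Gamma\psi|^2$ comes from a surface cross term, but only because the boundary decomposition $N_1\psi+N_2\psi$ contains the pieces $s\lambda\xi(\partial_\nu\eta^0)\psi$ and $-\partial_\nu\psi$, both of which enter through the \emph{normal} coupling $d\partial_\nu\varphi$ in the dynamic condition, not through $\Delta_\Gamma$.

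So the correct organization is not ``bulk Carleman $+$ independent surface Carleman, then match boundary terms,'' but rather: write the transformed boundary equation including the normal-derivative contributions, split it as $N_1\psi+N_2\psi$, and add $\langle N_1\psi,N_2\psi\rangle_{L^2(\Gamma_T)}$ to the bulk cross term. The crucial cancellation you allude to is then precisely $\int_{\Gamma_T}\partial_t\psi\,\partial_\nu\psi$ (a bulk boundary trace) against $-\int_{\Gamma_T}\partial_t\psi\,\partial_\nu\psi$ (from $\langle (N_1\psi)_1,(N_2\psi)_3\rangle$). Your plan of ``substituting $d\partial_\nu\varphi$ via the boundary equation'' would, if carried out carefully, lead to the same algebra, but as stated it misattributes the source of the good boundary terms and would leave you unable to produce them.
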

\begin{proof} 
We follow the strategy of Lemma~1.3 of \cite{FCG}, but we have to treat various new boundary terms
arising from the dynamic boundary condition. Rescaling in time, we restrict to the case $d = 1$. It can be 
seen  by convolution with mollifiers in space and time that $C^\infty([0,T]\times \overline{\Omega})$ is 
dense in $\E_1$. Since all terms in the asserted inequality are continuous with respect to the $\E_1$-norm,  
it suffices to consider smooth functions $\varphi$. Throughout $C$ denotes a generic constant which 
does not depend on $\lambda$, $s$, $a$, $b$, and $\varphi$ subject to the assumptions of the lemma.

\emph{Step 1. Change of variables.} 
Let $\varphi\in C^\infty([0,T]\times \overline{\Omega})$,  $\lambda \geq \lambda_1\ge1$
and $s\ge s_1\ge 1$ be given. We define
$$\psi:= e^{-s\alpha} \varphi, \qquad  f = e^{-s\alpha} (\partial_t \varphi+ \Delta \varphi - a\varphi), 
\qquad g = e^{-s\alpha} (\partial_t\varphi + \delta \Delta_\Gamma \varphi -  \partial_\nu \ph - b\ph).$$
Observe that these functions vanish exponentially at $t = 0$ and $t=T$.
We determine the problem solved by $\psi$. We first expand the spatial derivaties of $\alpha$ by the 
chain rule to bring $\eta^0$ into play, but we do not expand $\partial_t \alpha$. We calculate
\begin{align}
\nabla \alpha & = - \nabla \xi = - \lambda \xi \nabla \eta^0,\label{eq:xi2}\\
\Delta \alpha &
     = - \lambda^2 \xi |\nabla \eta^0|^2  - \lambda \xi \Delta \eta^0,\notag \\
\partial_t \psi & = e^{-s\alpha} \partial_t \varphi - s \psi\partial_t\alpha, \notag\\
\nabla \psi & = e^{-s\alpha} \nabla \varphi - s \psi\nabla\alpha 
 = e^{-s\alpha} \nabla \varphi + s \lambda \psi \xi \nabla \eta^0, \label{eq:psi}\\
\Delta \psi &
   = e^{-s\alpha} \Delta \varphi + \nabla (e^{-s\alpha})\cdot  \nabla \varphi 
           - s  \psi \Delta \alpha -s\nabla \psi  \cdot \nabla\alpha \notag\\
&= e^{-s\alpha} \Delta \varphi - 2s \nabla \psi \cdot \nabla \alpha - s^2 \psi |\nabla \alpha|^2 
     - s  \psi \Delta \alpha. \notag 
\end{align}
On $\Omega_T$ this yields transformed evolution equations
\begin{align}
\partial_t \psi + \Delta \psi -a\psi& = f- s\psi\big( \partial_t\alpha + \Delta \alpha\big)  
    - 2s \nabla \psi \cdot\nabla \alpha - s^2 \psi |\nabla \alpha|^2\label{m1} \\
& = f- s\psi \partial_t\alpha + s  \lambda^2 \psi \xi |\nabla \eta^0|^2  +s \lambda \psi \xi \Delta \eta^0
  +  2s \lambda \xi \nabla \psi \cdot \nabla \eta^0 - s^2\lambda^2  \psi \xi^2 |\nabla \eta^0 |^2.\notag 
\end{align}
Similarly, using \eqref{eq:eta} and \eqref{eq:xi1}, on $\Gamma_T$ we obtain
\begin{align}
\partial_t \psi + \delta \Delta_\Gamma \psi - \partial_\nu \psi - b\psi 
  &= g - s\psi \partial_t\alpha  - s\lambda \psi \xi \partial_\nu \eta^0.\label{m2}
\end{align}
Extending the corresponding decomposition in \cite{FCG}, we rewrite the equations \eqref{m1} and 
\eqref{m2} as
\begin{equation}\label{m3}
 M_1\psi + M_2\psi = \tilde f \quad \text{in }\Omega_T, \qquad 
  N_1\psi + N_2\psi = \tilde g \quad \text{on }\Gamma_T,
\end{equation}
with the abbreviations
\begin{align*}
M_1\psi &= -2 s  \lambda^2 \psi \xi |\nabla \eta^0|^2  - 2s \lambda \xi \nabla \psi  \cdot \nabla \eta^0 
           + \partial_t\psi = (M_1\psi)_1 + (M_1\psi)_2 + (M_1\psi)_3 , \\ 
M_2\psi & = s^2\lambda^2  \psi \xi^2 |\nabla \eta^0 |^2 + \Delta \psi + s\psi \partial_t\alpha 
            = (M_2\psi)_1 + (M_2\psi)_2 + (M_2\psi)_3,\\ 
N_1\psi & =  \partial_t\psi+ s\lambda \psi \xi  \partial_\nu \eta^0 = (N_1\psi)_1 + (N_1\psi)_2,\\ 
N_2\psi & = \delta \Delta_\Gamma \psi + s\psi \partial_t\alpha - \partial_\nu \psi 
     = (N_2\psi)_1 + (N_2\psi)_2 + (N_2\psi)_3, \\
\tilde f & = f +s \lambda \psi \xi \Delta \eta^0 - s  \lambda^2 \psi \xi |\nabla \eta^0|^2 + a\psi,\\
\tilde g &= g + b\psi.
\end{align*}
Applying $\|\cdot\|_{L^2(\Omega_T)}^2$ resp.\ $\|\cdot\|_{L^2(\Gamma_T)}^2$ to the equations in \eqref{m3} 
and adding the resulting identities, we obtain
\begin{align}
\|\tilde f\|_{L^2(\Omega_T)}^2 + \|\tilde g\|_{L^2(\Gamma_T)}^2
 &= \|M_1\psi\|_{L^2(\Omega_T)}^2 + \|M_2\psi\|_{L^2(\Omega_T)}^2  +\|N_1\psi\|_{L^2(\Gamma_T)}^2
    + \|N_2\psi\|_{L^2(\Gamma_T)}^2\label{m12} \\
& \qquad+ 2 \sum_{i,j} \langle  (M_1\psi)_i, (M_2\psi)_j\rangle_{L^2(\Omega_T)} 
  + 2 \sum_{i,j} \langle (N_1\psi)_i, (N_2\psi)_j\rangle_{L^2( \Gamma_T)}.  \notag 
\end{align}

\emph{Step 2. Estimating the mixed terms in \eqref{m12} from below.} 
We often use  the following basic pointwise estimates on $\overline{\Omega}$, 
\begin{equation}
 e^{2\lambda m \|\eta^0\|_\infty} \leq e^{2\lambda (m \|\eta^0\|_\infty + \eta^0)},\qquad 
 |\nabla \alpha| \leq C \lambda \xi, \qquad |\partial_t \alpha| \leq C \xi^2, \qquad 
   |\partial_t \xi|\leq C\xi^2.\label{m6}
\end{equation}

\emph{Step 2a.}  We start with the negative term
$$\langle (M_1\psi)_1, (M_2\psi)_1\rangle_{L^2(\Omega_T)} 
       = - 2s^3\lambda^4 \int_{\Omega_T} |\nabla \eta^0|^4 \xi^3 \psi^2 \,d x\, d t.$$
Using integration by parts, \eqref{eq:xi2} and \eqref{eq:eta}, we further derive
\begin{align*}
 \langle (M_1\psi)_2&\, , (M_2\psi)_1\rangle_{ L^2(\Omega_T)} 
 = - s^3\lambda^3 \int_{\Omega_T} |\nabla \eta^0|^2 \xi^3  \nabla \eta^0 \cdot \nabla (\psi^2)\,d x\, d t\\
&= s^3\lambda^3 \int_{\Omega_T} \text{div}\big(|\nabla \eta^0|^2 \xi^3\nabla \eta^0\big)\psi^2\,dx\,dt  
- s^3\lambda^3 \int_{\Gamma_T} |\nabla \eta^0|^2 \xi^3  \partial_\nu \eta^0 \psi^2  \,d S\, d t\\
& = 3 s^3 \lambda^4 \int_{\Omega_T} |\nabla \eta^0|^4 \xi^3 \psi^2 \,dx \, dt
    + s^3 \lambda^3 \int_{\Omega_T} \Delta \eta^0 |\nabla \eta^0|^2 \xi^3 \psi^2 \,dx \, dt\\
& \qquad + s^3\lambda^3\int_{\Omega_T}(\nabla (|\nabla \eta^0|^2)\cdot \nabla \eta^0)\xi^3\psi^2 \,dx\,dt 
  - s^3\lambda^3 \int_{\Gamma_T} |\partial_\nu \eta^0|^2 \xi^3  \partial_\nu \eta^0 \psi^2  \,d S\, d t\\
& =: B_1 + B_2 + B_3 - B_4.
\end{align*}
For sufficiently large $\lambda_1$, the fact that $\nabla \eta^0 \neq 0$ on 
$\overline{\Omega\backslash \omega'}$ implies
\begin{align*}
\langle (M_1\psi)_1&, (M_2\psi)_1\rangle_{L^2(\Omega_T)}  + \langle (M_1\psi)_2 , 
          (M_2\psi)_1\rangle_{L^2(\Omega_T)}\\
& \geq C s^3\lambda^4 \int_{\Omega_T} \xi^3 \psi^2 \, d x\, d t 
    - C s^3 \lambda^4 \int_{(0,T)\times \omega'} \xi^3\psi^2 \, dx \, dt - B_4\\
& =:  A - B -  B_4.
\end{align*}
Integrating by parts in time, we continue with
\begin{align*}
 \langle (M_1\psi)_3, (M_2\psi)_1\rangle_{L^2(\Omega_T)} 
  &= \frac{1}{2} s^2\lambda^2 \int_{\Omega_T} |\nabla\eta^0|^2 \xi^2 \partial_t (\psi^2) \, dx \, dt
   = - s^2\lambda^2 \int_{\Omega_T} |\nabla\eta^0|^2  \partial_t \xi \xi \psi^2 \, dx \, dt\\
& \geq - C s^2\lambda^2 \int_{\Omega_T}    \xi^3 \psi^2 \, dx \, dt,
\end{align*}
where we  employed \eqref{m6} and that $\psi(0)=\psi(T)=0$. This term is absorbed by $A$ for large 
$\lambda_1$. Altogether, we have  shown 
\begin{align*}
\sum_{i=1,2,3}\langle (M_1\psi)_i, (M_2\psi)_1\rangle_{L^2(\Omega_T)} & \geq   A -  B -B_4
  \ge A -  B + Cs^3\lambda^3 \int_{\Gamma_T} \xi^3 \psi^2  \,d S\, dt,
 \end{align*}
using also \eqref{eq:eta}. We remark that $A$ and $B_4$ are the dominant positive terms involving 
$\psi^2$, whereas $B$ will lead to a `control term' on the right hand side of the final estimate.

\emph{Step 2b.} Integration by parts and \eqref{eq:xi1} yield
\begin{align*}
 \langle (M_1\psi)_1&, (M_2\psi)_2\rangle_{L^2(\Omega_T)}  
    = - 2s\lambda^2 \int_{\Omega_T} |\nabla \eta^0|^2 \xi \psi \Delta \psi  \, dx\, dt\\
& = 2s\lambda^2 \int_{\Omega_T} \nabla (|\nabla \eta^0|^2 \xi \psi)\cdot  \nabla \psi  \, dx\, dt 
  -  2s\lambda^2 \int_{\Gamma_T} |\nabla \eta^0|^2 \xi \psi \partial_\nu \psi \, dS\, dt\\
& = 2s\lambda^2 \int_{\Omega_T} |\nabla \eta^0|^2 \xi   |\nabla \psi|^2  \, dx\, dt  
    + 2s\lambda^2 \int_{\Omega_T} \xi \psi \nabla (|\nabla \eta^0|^2)\cdot  \nabla \psi  \, dx\, dt \\
&\qquad +2s\lambda^3 \int_{\Omega_T}|\nabla \eta^0|^2 \psi  \xi \nabla \eta^0 \cdot\nabla\psi  \,dx\,dt 
 -  2s\lambda^2 \int_{\Gamma_T} |\nabla \eta^0|^2 \xi \psi \partial_\nu \psi \, dS\, dt\\
& =: C_1 + C_2 + C_3 - C_4.
\end{align*}
Similar as $B_1$ above, the summand $C_1$ will lead to a term controlling $|\nabla\psi|^2$.
We now apply  Young's inequality to 
 $(\lambda^2 \xi^{1/2}\psi) \big( \xi^{1/2}\nabla (|\nabla \eta^0|^2) \cdot \nabla \psi\big)$ and 
 $\big (s\lambda \psi  \xi  \big) \big( \nabla \eta^0  \cdot \nabla \psi\big)$, respectively, and estimate
\begin{align*}
|C_2| &\leq C s \lambda^4 \int_{\Omega_T} \xi \psi^2  \, dx\, dt 
  + C s \int_{\Omega_T} \xi |\nabla \psi|^2  \, dx\, dt,\\
|C_3| &\leq C s^2\lambda^4 \int_{\Omega_T} \xi^2 \psi^2 \, dx\, dt 
     + C \lambda^2 \int_{\Omega_T} |\nabla \psi|^2 \, dx\, dt.
\end{align*}     
It follows
\begin{align}
\langle (M_1\psi)_1, (M_2\psi)_2\rangle_{L^2(\Omega_T)} 
   & \geq  C_1 - C s^2\lambda^4 \!\int_{\Omega_T} \!\!\! \xi^2\psi^2\, dx \, dt 
    - C\int\int_{\Omega_T} \!\!\!(s\xi +\lambda^2) |\nabla\psi|^2\, dx \, dt-C_4.\label{m8}
\end{align}
The next summand is given by 
\begin{align*}
\langle (M_1\psi)_2&, (M_2\psi)_2\rangle_{L^2(\Omega_T)} 
      = -2s\lambda \int_{\Omega_T} (\nabla \eta^0 \cdot \nabla\psi) \xi  \Delta \psi \, dx \, d t \\
& = -2s\lambda \int_{\Gamma_T}  \nabla \eta^0 \cdot \nabla \psi   \partial_\nu \psi\xi  \, dS \, d t 
  + 2s\lambda\int_{\Omega_T}\nabla\big((\nabla \eta^0 \cdot \nabla\psi)\xi \big) \cdot \nabla \psi\,dx\,dt\\
& = -2s\lambda \int_{\Gamma_T}  \nabla \eta^0 \cdot \nabla \psi   \partial_\nu \psi\xi \, dS \, d t 
    + 2s\lambda \int_{\Omega_T} \xi\, (\nabla^2 \eta^0 \,\nabla\psi) \cdot \nabla \psi\, dx \, d t \\
& \qquad    + 2s\lambda^2 \int_{\Omega_T}  \xi |\nabla \eta^0 \cdot \nabla\psi|^2 \, dx \, d t 
   + s\lambda \int_{\Omega_T}  \nabla \eta^0 \cdot \nabla|\nabla\psi|^2\,\xi  \, dx \, d t\\
& =: D_1 + D_2 + D_3 + D_4,
\end{align*}
due to integration by parts and \eqref{eq:xi2}. The term $D_3$ is positive, and $D_2$ can be bounded by
$$D_2 \geq - C s\lambda \int_{\Omega_T} \xi |\nabla \psi|^2 \, dx \, dt.$$
For $D_4$, integration by parts and  \eqref{eq:xi2} yield
\begin{align*}
D_4 & \, =   s\lambda \int_{\Gamma_T}  \partial_\nu  \eta^0 |\nabla \psi|^2\,\xi  \, dS \, d t 
    - s\lambda \int_{\Omega_T}  \text{div}(\xi \nabla \eta^0) |\nabla\psi|^2  \, dx \, d t\\
& = s\lambda \int_{\Gamma_T}  \partial_\nu  \eta^0 |\nabla \psi|^2\,\xi  \, dS \, d t 
  - s\lambda^2 \int_{\Omega_T}   \xi   |\nabla \eta^0|^2 |\nabla\psi|^2  \, dx \, d t  
  - s\lambda \int_{\Omega_T}   \xi \cdot \Delta  \eta^0 |\nabla\psi|^2  \, dx \, d t  \\
&=: D_{41} + D_{42} + D_{43}.
\end{align*}
Since $D_{43}$ can be treated as $D_2$, we infer
\begin{align*}
 \langle (M_1\psi)_2, (M_2\psi)_2\rangle_{L^2(\Omega_T)} 
  & \geq -2 s\lambda \int_{\Gamma_T}  \nabla \eta^0 \cdot \nabla \psi   \partial_\nu \psi\xi \, dS \, d t 
    -C s\lambda \int_{\Omega_T} \xi |\nabla \psi|^2 \, dx \, dt  \\
& \qquad + s\lambda \int_{\Gamma_T}  \partial_\nu  \eta^0 |\nabla \psi|^2\,\xi  \, dS \, d t 
  - s\lambda^2 \int_{\Omega_T}   \xi   |\nabla \eta^0|^2 |\nabla\psi|^2  \, dx \, d t.
\end{align*}
Observe that the last term can go with $C_1$ above. Because $\nabla \psi$ vanishes at $t = 0$ and $t = T$
in view of \eqref{eq:psi}, we obtain
\begin{align}
 \langle (M_1\psi)_3, (M_2\psi)_2\rangle_{L^2(\Omega_T)} 
 &= \int_{\Omega_T} \partial_t \psi \Delta \psi \, dx \,d t 
  = \int_{\Gamma_T} \partial_t \psi \partial_\nu \psi \, dS \,d t.\label{m13}
\end{align}
We summarize the estimates of this step and invoke again that  $\nabla \eta^0 \neq 0$ on 
$\overline{\Omega\backslash \omega'}$. Using also \eqref{eq:eta}, we arrive at
\begin{align*}
\sum_{i=1,2,3}\langle (M_1&\,\psi)_i, (M_2\psi)_2\rangle_{L^2(\Omega_T)} 
 \geq s\lambda^2 \int_{\Omega_T} |\nabla \eta^0|^2 \xi   |\nabla \psi|^2  \, dx\, dt 
   - C s^2\lambda^4 \int_{\Omega_T} \xi^2\psi^2\, dx \, dt \\
&\qquad  - C \int_{\Omega_T} (s\xi +\lambda^2) |\nabla\psi|^2\, dx \, dt  
   -  2s\lambda^2 \int_{\Gamma_T} |\nabla \eta^0|^2 \xi \psi \partial_\nu \psi \, dS\, dt\\
&\qquad   -2s\lambda \int_{\Gamma_T}  \nabla \eta^0 \cdot \nabla \psi\partial_\nu \psi\xi\,dS\,dt
      - C s\lambda \int_{\Omega_T} \xi |\nabla \psi|^2 \, dx \, dt\\
&\qquad + s\lambda \int_{\Gamma_T} \partial_\nu \eta^0 |\nabla \psi|^2 \xi \, d S\, dt 
  + \int_{\Gamma_T} \partial_t \psi \partial_\nu \psi \, dS \,d t\\
&\geq C s\lambda^2 \int_{\Omega_T}  \xi   |\nabla \psi|^2  \, dx\, dt 
  - C s\lambda^2 \int_{(0,T)\times \omega'}  \xi   |\nabla \psi|^2  \, dx\, dt 
    - C s^2\lambda^4 \int_{\Omega_T} \xi^2\psi^2\, dx \, dt \\
&\qquad    -  2s\lambda^2 \int_{\Gamma_T} (\partial_\nu \eta^0)^2 \xi \psi \partial_\nu \psi \, dS\, dt   
         -2s\lambda \int_{\Gamma_T}  \partial_\nu  \eta^0    (\partial_\nu \psi)^2\xi  \, dS \, d t \\
&\qquad + s\lambda \int_{\Gamma_T} \partial_\nu \eta^0 |\nabla \psi|^2 \xi \, d S\, dt 
  + \int_{\Gamma_T} \partial_t \psi \partial_\nu \psi \, dS \,d t,
\end{align*}
 increasing $\lambda_1$ and $s_1$ if necessary.

\emph{Step 2c.} Employing \eqref{m6}, we estimate
\begin{align*}
\langle (M_1\psi)_1&, (M_2\psi)_3\rangle_{L^2(\Omega_T)} 
 = - 2s^2\lambda^2 \int_{\Omega_T} |\nabla \eta^0|^2 \partial_t \alpha \xi \psi^2 \, dx\, dt\geq 
    - C s^2\lambda^2  \int_{\Omega_T}  \xi^3 \psi^2 \, dx\, dt.
\end{align*}
This term is absorbed by $A$. Integration by parts, \eqref{eq:xi2} and \eqref{m6} next imply
\begin{align*}
\langle (M_1\psi)_2&, (M_2\psi)_3\rangle_{L^2(\Omega_T)} 
    = - s^2 \lambda \int_{\Omega_T} \partial_t \alpha \xi \nabla\eta^0 \cdot \nabla (\psi^2) \, dx\, dt\\
&= - s^2 \lambda \int_{\Gamma_T} \partial_t \alpha \xi \partial_\nu\eta^0 \psi^2 \, dS\, dt 
   +  s^2 \lambda \int_{\Omega_T} \text{div}(\partial_t \alpha \xi \nabla\eta^0) \psi^2 \, dx\, dt\\
& = - s^2 \lambda \int_{\Gamma_T} \partial_t \alpha \xi \partial_\nu\eta^0 \psi^2 \, dS\, dt 
  +  s^2 \lambda \int_{\Omega_T} \nabla (\partial_t \alpha) \cdot \nabla\eta^0 \xi \psi^2 \, dx\, dt\\
&\qquad +  s^2 \lambda \int_{\Omega_T} \partial_t \alpha \nabla \xi \cdot \nabla\eta^0 \psi^2 \, dx\, dt 
   +  s^2 \lambda \int_{\Omega_T} \partial_t \alpha \xi \Delta \eta^0 \psi^2 \, dx\, dt\\
&\geq - C s^2 \lambda  \int_{\Gamma_T} \xi^3 \psi^2 \, dS\, dt 
     - C s^2 \lambda^2  \int_{\Omega_T} \xi^3 \psi^2 \, dx\, dt.
\end{align*}
Integrating by parts with respect to time, we can derive
\begin{align}\label{eq:M33}
\langle (M_1\psi)_3, (M_2\psi)_3\rangle_{L^2(\Omega_T)} 
  &= \frac{s}{2} \int_{\Omega_T} \partial_t \alpha \partial_t (\psi^2)\, d x \,d t 
   =  - \frac{s}{2} \int_{\Omega_T} \partial_t^2 \alpha \psi^2\, d x \,d t\\
&\geq  -Cs \int_{\Omega_T} \xi^3 \psi^2\, d x \,d t, \notag
\end{align}
since $\psi$ vanishes at the endpoints and $|\partial_t^2 \alpha| \leq C \xi^3$. Choosing sufficiently 
large $s_1$ and $\lambda_1$, we conclude from the above inequalities that
\begin{align*}
 \sum_{i=1,2,3}\langle (M_1&\,\psi)_i, (M_2\psi)_3\rangle_{L^2(\Omega_T)} 
  \geq   - C s^2 \lambda^2 \int_{\Omega_T} \xi^3 \psi^2 \, dx\, dt 
    - C s^2 \lambda   \int_{\Gamma_T} \xi^3 \psi^2 \, dS\, dt.
\end{align*}

\emph{Step 2d.} We now consider the boundary terms $N_1$ and $N_2$, employing the surface divergence 
theorem \eqref{eq:sdt} several times. We first compute
$$\langle (N_1\psi)_1, (N_2\psi)_1\rangle_{L^2(\Gamma_T)} 
= \int_{\Gamma_T} \delta \partial_t \psi \Delta_\Gamma \psi \,dS\,dt 
= -\frac{\delta}{2} \int_{\Gamma_T}  \partial_t |\nabla_\Gamma \psi|^2 \,dS\,dt  = 0$$
by means of $\psi(0)=\psi(T)=0$. Moreover,  \eqref{eq:xi1} yields
\begin{align*}
 \langle (N_1\psi)_2&, (N_2\psi)_1\rangle_{L^2(\Gamma_T)} 
   = \delta s\lambda \int_{\Gamma_T} \partial_\nu \eta^0 \xi \Delta_\Gamma \psi \psi \, d S \, dt
    = - \delta s\lambda \int_{\Gamma_T} \langle \nabla_\Gamma(\partial_\nu \eta^0 \xi \psi), 
       \nabla_\Gamma \psi\rangle_\Gamma \, d S \, dt\\
& = -\delta s\lambda \int_{\Gamma_T}  \xi \psi \langle \nabla_\Gamma\partial_\nu \eta^0 , 
    \nabla_\Gamma \psi\rangle_\Gamma \, d S \, dt 
    - \delta s\lambda \int_{\Gamma_T} \partial_\nu \eta^0 \xi  |\nabla_\Gamma \psi|^2 \, d S \, dt.
\end{align*}
The next two terms are estimated by
\begin{align*}
\langle (N_1\psi)_1, (N_2\psi)_2\rangle_{L^2(\Gamma_T)} 
 &= \frac{s}{2} \int_{\Gamma_T} \partial_t\alpha \partial_t(\psi^2)\,dS\,dt 
    \geq  -C s \int_{\Gamma_T} \xi^3 \psi^2\, d S \,d t,\\\
 \langle (N_1\psi)_2, (N_2\psi)_2\rangle_{L^2(\Gamma_T)} 
 &= s^2\lambda \int_{\Gamma_T} \partial_\nu \eta^0 \partial_t \alpha \xi \psi^2 \, dS\, dt \geq 
  -Cs^2 \lambda \int_{\Gamma_T} \xi^3 \psi^2 \, d S\, dt,
\end{align*}
where we proceed as in \eqref{eq:M33}  and use \eqref{m6}, respectively. Finally, the summand
\begin{align*}
 \langle (N_1\psi&\,)_1, (N_2\psi)_3\rangle_{L^2(\Gamma_T)} 
     = - \int_{\Gamma_T} \partial_t \psi \partial_\nu \psi\, dS\, dt
\end{align*}
cancels with the one from \eqref{m13}, and
\begin{align*}
 \langle (N_1\psi)_2, (N_2\psi)_3\rangle_{L^2(\Gamma_T)} 
     = - s\lambda \int_{\Gamma_T}  \xi \partial_\nu \eta^0 \partial_\nu \psi \psi\, d S\, dt.
\end{align*}

\emph{Step 3. The transformed estimate.} We collect the final inequalities in Steps 2a--2d. 
Increasing $\lambda_1$ and $s_1$ if needed to absorb lower order terms, we arrive at
\begin{align*}
  \sum_{i,j} &\langle (M_1\psi)_i, (M_2\psi)_j\rangle_{L^2(\Omega_T)}
    +  \sum_{i,j} \langle (N_1\psi)_i, (N_2\psi)_j\rangle_{L^2(\Gamma_T)}\\
& \geq C s^3\lambda^4 \int_{\Omega_T} \xi^3 \psi^2 \, d x\, d t 
    C s^3 \lambda^4 \int_{(0,T)\times \omega'} \xi^3\psi^2 \, dx \, dt\\
&\qquad + C s^3\lambda^3 \int_{\Gamma_T}  \xi^3   \psi^2  \,d S\, d t
  + C s\lambda^2 \int_{\Omega_T}  \xi   |\nabla \psi|^2  \, dx\, dt \\
 &\qquad - C s \lambda^2 \int_{(0,T)\times \omega'}  \xi   |\nabla \psi|^2  \, dx\, dt  
 -  2s\lambda^2 \int_{\Gamma_T} (\partial_\nu \eta^0)^2 \xi \psi \partial_\nu \psi \, dS\, dt \\
 &\qquad   -2s\lambda \int_{\Gamma_T}  \partial_\nu  \eta^0    (\partial_\nu \psi)^2\xi  \, dS \, d t 
 + s\lambda \int_{\Gamma_T} \partial_\nu \eta^0 |\nabla \psi|^2 \xi \, d S\, dt\\
& \qquad - \delta s\lambda \int_{\Gamma_T}  \xi \psi \langle \nabla_\Gamma\partial_\nu \eta^0, 
   \nabla_\Gamma \psi \rangle_{\Gamma}\, d S \, dt
      - \delta s\lambda \int_{\Gamma_T} \partial_\nu \eta^0 \xi  |\nabla_\Gamma \psi|^2 \, d S \, dt\\
&\, \qquad - s\lambda \int_{\Gamma_T}  \xi \partial_\nu \eta^0 \partial_\nu \psi \psi\, d S\, dt.
\end{align*}
We combine this estimate with \eqref{m12}. The expressions for $\tilde f$ and $\tilde g$ lead to additional 
lower order terms which can be absorbed to the left-hand side for large $\lambda_1$ and $s_1$. Using also 
$|\nabla \psi|^2 = |\nabla_\Gamma \psi|^2 + |\partial_\nu \psi|^2$ and \eqref{eq:eta}, we deduce
\begin{align}
\|M_1\psi&\|_{L^2(\Omega_T)}^2 + \|M_2\psi\|_{L^2(\Omega_T)}^2  +\|N_1\psi\|_{L^2(\Gamma_T)}^2 
   + \|N_2\psi\|_{L^2(\Gamma_T)}^2 
 + s^3\lambda^4 \int_{\Omega_T} \xi^3 \psi^2 \, d x\, d t \notag \\
 & \qquad +  s\lambda^2 \int_{\Omega_T}  \xi   |\nabla \psi|^2  \, dx\,dt 
  +   s^3\lambda^3 \int_{\Gamma_T} \xi^3  \psi^2  \,d S\, dt 
  + s\lambda \int_{\Gamma_T} \xi  |\nabla_\Gamma\psi|^2  \,d S\, dt \notag\\
&\leq C \int_{\Omega_T} e^{-2s\alpha} |\partial_t\varphi + \Delta \varphi - a \varphi|^2\, dx\,d t 
+ C\int_{\Gamma_T} e^{-2s\alpha} |\partial_t\varphi + \delta\Delta_\Gamma \varphi - \partial_\nu \varphi
        - b \varphi|^2\, dS\,d t \notag\\
& \qquad + C s^3 \lambda^4 \int_{(0,T)\times \omega'} \xi^3\psi^2 \, dx \, dt 
   + C s\lambda^2 \int_{(0,T)\times \omega'}  \xi   |\nabla \psi|^2  \, dx\, dt\notag \\
&\qquad    +  Cs\lambda^2 \int_{\Gamma_T} (\partial_\nu \eta^0)^2 \xi \psi \partial_\nu \psi \, dS\, dt   
+ C s\lambda \int_{\Gamma_T}  \partial_\nu  \eta^0    (\partial_\nu \psi)^2\xi  \, dS \, d t \notag\\
&\qquad + Cs\lambda\int_{\Gamma_T}\xi |\partial_\nu \eta^0| \, |\nabla_\Gamma \psi|^2\,dS\,dt  
 + Cs\lambda\int_{\Gamma_T}\xi\psi |\nabla_\Gamma \partial_\nu \eta^0|\,|\nabla_\Gamma \psi|\,dS\,dt\notag\\
& \qquad + C s\lambda \int_{\Gamma_T}  \xi \partial_\nu \eta^0 \partial_\nu \psi \psi\, d S\, dt.\label{m666}
\end{align}
We denote the five latter boundary integrals on the right-hand side of \eqref{m666} by $I_1,\ldots, I_5$.  Because of \eqref{eq:eta}, the second one can be bounded by
\begin{equation}\label{m667}
I_2 \leq -C s\lambda \int_{\Gamma_T}(\partial_\nu \psi)^2\xi  \, dS \, d t.
\end{equation}
 Young's inequality with $\varepsilon > 0$ allows to estimate $I_1$ by
\begin{align}\notag
I_1&\le Cs\int_{\Gamma_T}|\lambda^{3/2}\xi^{1/2}\psi|\,|\lambda^{1/2}\xi^{1/2}\partial_\nu \psi|\,dS\,dt\\
& \leq C_\varepsilon s \lambda^3 \int_{\Gamma_T} \xi^3 \psi^2 \, dS\, dt 
    + \varepsilon s\lambda \int_{\Gamma_T} \xi (\partial_\nu \psi)^2 \, dS\, dt.\label{est:I2}
\end{align}
Fixing a sufficiently small $\e>0$, we can absorb the second summand in \eqref{est:I2} by the 
right-hand side of \eqref{m667}. Increasing $s_1$ if necessary, we can then control  the first 
summand in \eqref{est:I2} by the left-hand side of \eqref{m666}. The fifth boundary integral $I_5$
is treated analogously.

For the integral $I_3$ in \eqref{m666}, recall from Section~\ref{sec:beltrami} that $\|\cdot\|_{L^2(\Gamma)} + \|\nabla_\Gamma \cdot\|_{L^2(\Gamma)}$ defines an equivalent norm on 
$H^1(\Gamma)$. The interpolation inequality \eqref{eq:interpol} thus yields
$$ \|\nabla_\Gamma \psi\|_{L^2(\Gamma)}^2 \leq C\|\psi\|_{H^2(\Gamma)} \|\psi\|_{L^2(\Gamma)}.$$
Moreover, $\|\cdot\|_{L^2(\Gamma)} + \|\Delta_\Gamma \cdot\|_{L^2(\Gamma)}$ defines an equivalent norm on 
$H^2(\Gamma)$. Also using that $\xi(t,\cdot)$ is constant on $\Gamma$, the integral $I_3$ can be bounded by
\begin{align}
I_3 & \leq Cs\lambda \int_0^T \xi \|\nabla_\Gamma \psi\|_{L^2(\Gamma)}^2\,dt\notag \\
&\leq C \int_0^T \big(s^{-1/2} \xi^{-1/2} \|\psi\|_{H^2(\Gamma)}\big) \big(s^{3/2} \lambda 
                   \xi^{3/2} \|\psi\|_{L^2(\Gamma)}\big)\,dt\notag \\
& \leq \varepsilon s^{-1} \int_{\Gamma_T} \xi^{-1} |\Delta_\Gamma \psi|^2 \,dS\,dt 
  + C_\e s^3 \lambda^2 \int_{\Gamma_T} \xi^3 \psi^2 \,dS\,dt \label{m668}
\end{align}
for $\e\in(0,1]$. In a similar way, we estimate $I_4$ by
\begin{align}
I_4 &\leq s\lambda \int_{\Gamma_T} \xi \|\nabla_\Gamma \psi\|_{L^2(\Gamma)}^2  \, dt 
     +  Cs \lambda \int_{\Gamma_T}  \xi \psi^2 \, d S \, dt \notag \\
& \leq  \varepsilon s^{-1} \int_{\Gamma_T} \xi^{-1} |\Delta_\Gamma \psi|^2 \,dS\,dt 
   + C_\e s^3 \lambda^2 \int_{\Gamma_T} \xi^3 \psi^2 \,dS\,dt.\label{m669}
\end{align}
The second summands in \eqref{m668} and \eqref{m669} can be absorbed by the left-hand side of \eqref{m666}
choosing a sufficiently large $\lambda_1$  depending on $\e\in(0,1]$. Altogether, we thus arrive at
\begin{align}
\|&M_1\psi\|_{L^2(\Omega_T)}^2 + \|M_2\psi\|_{L^2(\Omega_T)}^2  +\|N_1\psi\|_{L^2(\Gamma_T)}^2 
  + \|N_2\psi\|_{L^2(\Gamma_T)}^2 + s^3\lambda^4 \int_{\Omega_T} \xi^3 \psi^2 \, dx\,dt \notag\\
&\qquad +s\lambda^2 \!\int_{\Omega_T}\!  \xi   |\nabla \psi|^2  \, dx\,dt 
+  s^3\lambda^3\! \int_{\Gamma_T} \!\xi^3  \psi^2  \,d S\, dt 
 + s\lambda \!\int_{\Gamma_T}  \!  \xi(\partial_\nu \psi)^2  \, dS \, d t
  + s\lambda\! \int_{\Gamma_T}\! \xi  |\nabla_\Gamma\psi|^2  \,d S\, dt\notag\\
&\leq C\int_{\Omega_T} e^{-2s\alpha} |\partial_t\varphi + \Delta \varphi - a \varphi|^2\, dx\,d t 
   +C \int_{\Gamma_T} e^{-2s\alpha} |\partial_t\varphi + \delta\Delta_\Gamma \varphi -d \partial_\nu \varphi
       - b \varphi|^2\, dS\,d t \notag\\
&\qquad +Cs^3 \lambda^4 \int_{\omega'_T} \xi^3\psi^2 \, dx \, dt 
  + Cs\lambda^2 \int_{\omega'_T}  \xi   |\nabla \psi|^2  \, dx\, dt
   +\varepsilon s^{-1} \int_0^T \xi^{-1} |\Delta_\Gamma \psi|^2 \,dS\, dt.\label{m18}
\end{align}
To put the last summand in \eqref{m18} to the left, we observe that 
$\delta \Delta_\Gamma \psi = N_2\psi - s\psi \partial_t \alpha + \partial_\nu \psi$. Combined with 
\eqref{m6}, this identity yields
\begin{align} \label{eq:LB}
I:=s^{-1}\!\int_{\Gamma_T} \!\xi^{-1} |\Delta_\Gamma \psi|^2\,dS \, dt  
 \leq C\|N_2\psi\|_{L^2(\Gamma_T)}^2 + C s \! \int_{\Gamma_T}\! \xi^3 \psi^2 \,d S\, dt 
       + C\!\int_{\Gamma_T}\!\xi (\partial_\nu \psi)^2\, d S\, dt.
\end{align}
We can now fix a sufficiently small $\e\in(0,1]$ and choose sufficiently large $\lambda_1$ and $s_1$  
such that \eqref{m18} becomes
\begin{align}
\|&M_1\psi\|_{L^2(\Omega_T)}^2 + \|M_2\psi\|_{L^2(\Omega_T)}^2  +\|N_1\psi\|_{L^2(\Gamma_T)}^2 
 + \|N_2\psi\|_{L^2(\Gamma_T)}^2  + s^3\lambda^4 \int_{\Omega_T} \xi^3 \psi^2 \, d x\, d t \notag\\
 &\qquad  +s\lambda^2\!\int_{\Omega_T} \! \xi   |\nabla \psi|^2  \, dx\,dt 
  +  s^3\lambda^3 \! \int_{\Gamma_T} \xi^3 \! \psi^2  \,d S\, dt 
   + s\lambda \! \int_{\Gamma_T}\!\xi(\partial_\nu \psi)^2\,dS\,d t
    + s\lambda \!\int_{\Gamma_T}\! \xi  |\nabla_\Gamma\psi|^2  \,d S\, dt\notag\\
& \leq C \int_{\Omega_T} e^{-2s\alpha} |\partial_t\varphi + \Delta \varphi - a \varphi|^2\, dx\,d t 
 + C\int_{\Gamma_T} e^{-2s\alpha} |\partial_t\varphi + \delta\Delta_\Gamma \varphi - \partial_\nu \varphi 
    - b \varphi|^2\, dS\,d t\notag\\
& \qquad +Cs^3 \lambda^4 \int_{\omega'_T} \xi^3\psi^2 \, dx \, dt 
    + C s\lambda^2 \int_{\omega'_T}  \xi   |\nabla \psi|^2  \, dx\, dt. \label{est:car0}
\end{align}

{\it Step 4. Inverting the transformation.} The inequality \eqref{eq:LB} allows to replace in 
\eqref{est:car0} the summand $\|N_2\psi\|_{L^2(\Gamma_T)}^2$ by the term $I$ times a constant, where we 
increase $\lambda_1$ and $s_1$ if necessary to absorb the lower order terms in \eqref{eq:LB}. Similarly,
from $\partial_t\psi= N_1 \psi- s\lambda \psi \xi  \partial_\nu \eta^0$, we deduce that 
\[ I':=s^{-1} \int_{\Gamma_T} \xi^{-1} |\partial_t\psi|^2\,dS \, dt  
 \leq C\|N_1\psi\|_{L^2(\Gamma_T)}^2 + C s\lambda^2  \int_{\Gamma_T} \xi \psi^2 \,d S\, dt, \]
and hence also $I'$ can be put on the left hand side of \eqref{est:car0}.  In a similar way one handles
the corresponding terms on $\Omega_T$, see (1.58) and (1.59) in \cite{FCG}. We thus infer 
\begin{align}\label{est:car1}
   s^{-1}&\int_{\Omega_T} \xi^{-1} (|\partial_t\psi|^2 + |\Delta \psi|^2 )\,dx \, dt  
  +s^{-1}\int_{\Gamma_T} \xi^{-1} (|\partial_t\psi|^2 + |\Delta_\Gamma \psi|^2)\,dS \, dt  \\
 &\qquad  + s^3\lambda^4 \int_{\Omega_T} \xi^3 \psi^2 \, d x\, d t 
  +s\lambda^2 \int_{\Omega_T} \xi   |\nabla \psi|^2  \, dx\,dt \notag\\
 &\qquad  +  s^3\lambda^3 \int_{\Gamma_T} \xi^3  \psi^2  \,d S\, dt 
   + s\lambda \int_{\Gamma_T}\xi(\partial_\nu \psi)^2\,dS\,d t
    + s\lambda \int_{\Gamma_T}\xi  |\nabla_\Gamma\psi|^2  \,d S\, dt\notag\\
& \leq C \int_{\Omega_T} e^{-2s\alpha} |\partial_t\varphi + \Delta \varphi - a \varphi|^2\, dx\,d t 
 + C\int_{\Gamma_T} e^{-2s\alpha} |\partial_t\varphi + \delta\Delta_\Gamma \varphi - \partial_\nu \varphi 
    - b \varphi|^2\, dS\,d t\notag\notag\\
& \qquad +Cs^3 \lambda^4 \int_{\omega_T} \xi^3\psi^2 \, dx \, dt 
    + C s\lambda^2 \int_{\omega'_T}  \xi   |\nabla \psi|^2  \, dx\, dt, \notag
\end{align}
also using $\omega'\subset \omega$. As on p.1409 of \cite{FCG}, one can now 
absorb the gradient term on the right-hand side by the integral on $\omega_T$ and the left-hand side.

It remains to insert $\psi=e^{-s\alpha} \ph$ into \eqref{est:car1}. The terms involving derivatives of
 $\psi$ then lead to various lower order terms which can be controlled by the other terms in 
\eqref{est:car1}. For summands on $\Omega_T$ this is done in Step~3 of the proof of Lemma~1.3 in 
\cite{FCG}. Since the new terms on the boundary can be treated in the same way, we omit the details. 
One thus obtains the asserted Carleman estimate for the original function $\varphi$.
 \end{proof}

\begin{remark} \label{rem:delta=0} Up to inequality \eqref{m666}, the arguments in the above proof 
remain valid also for $\delta = 0$. However, in our proof the assumption $\delta > 0$ is essential to 
put the third and fourth boundary integral $I_3$ and $I_4$ on the right-hand side of \eqref{m666} to the left. 
\end{remark}

\section{Null controllability of linear and semilinear problems}\label{sec:null}
In this section we apply the Carleman estimate to show null controllability for \eqref{eq:intro} and its generalizations.
Throughout we fix $T> 0$, $\omega \Subset \Omega$, $d,\delta > 0$,  $a\in L^\infty(Q_T)$ and $b\in L^\infty(\Gamma_T)$. We assume that $s_1$ and $\lambda_1$ are sufficiently large to apply Lemma~\ref{lem:carleman} for this data.

\subsection{Linear problems} 
We show the null controllability of the linear inhomogeneous system
\begin{alignat}{2}
\partial_t y -d\Delta y +a(t,x)y
  &= v(t,x) \one_{\omega} +f(t,x)   &  \qquad \quad &\text{in }\Omega_T,\label{eq:4} \\
\partial_t y -\delta \Delta_\Gamma y + d\partial_\nu y + b(t,x) y  
  & = g(t,x) &  &\text{on }\Gamma_T, \label{eq:5}\\
y(0,\cdot) & = y_0 & &\text{in }\overline{\Omega}, \label{eq:6}
\end{alignat}
  by a standard duality argument. To this end, we first derive an observability estimate for the 
homogeneous backward system
\begin{alignat}{2}
-\partial_t \varphi-d\Delta \varphi + a(t,x) \varphi 
   & = 0&  \qquad \quad &\text{in }\Omega_T,\label{eq:15}\\
-\partial_t \varphi -\delta\Delta_\Gamma \varphi +d\partial_\nu \varphi + b(t,x) \varphi 
   & =0 &  &\text{on }\Gamma_T,\label{eq:16}\\
\varphi(T,\cdot) & = \varphi_T & & \text{in }\overline{\Omega},\label{eq:18}
\end{alignat}
from the Carleman estimate. This result also yields the final state observability of the system 
\eqref{eq:4}--\eqref{eq:6} with $f=g=v=0$.

\begin{proposition}\label{lem:observa} 
There is a constant $C >0$ such that for all $\varphi_T\in \L^2$ the mild solution 
$\varphi$ of the backward problem \eqref{eq:15}--\eqref{eq:18} satisfies
\begin{equation}
 \|\varphi(0,\cdot)\|_{\mathbb L^2}^2  \leq C \int_{\omega_T} |\varphi|^2 \,dx\,dt.\label{eq:observa}
\end{equation}
Moreover, for $ y_0\in \L^2$ the mild solution $y$ of the forward problem \eqref{eq:4}--\eqref{eq:6} with $f=g=v=0$
satisfies
\begin{equation}
 \|y(T,\cdot)\|_{\mathbb L^2}^2  \leq C \int_{\omega_T} |y|^2 \,dx\,dt.\label{eq:observa-y}
\end{equation}
Given $R > 0$, the constants $C=C(R)$ can be chosen independently of all $a,b$ with $\|a\|_\infty,\|b\|_\infty\le R$.
 \end{proposition}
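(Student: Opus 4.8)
The strategy is the classical one: derive the observability estimate \eqref{eq:observa} from the Carleman estimate of Lemma~\ref{lem:carleman} by an energy/dissipation argument, and then obtain \eqref{eq:observa-y} from \eqref{eq:observa} by the time-reversal transformation already discussed before Proposition~\ref{prop:MR}. For \eqref{eq:observa} I would first treat $\varphi_T\in\H^1$, so that the backward problem \eqref{eq:15}--\eqref{eq:18} has a strong solution $\varphi\in\E_1$ by the analogue of Proposition~\ref{prop:MR} (then pass to $\varphi_T\in\L^2$ by density at the very end, using the continuity of the solution map from Proposition~\ref{prop:mild}(f)). Applying Lemma~\ref{lem:carleman} with $f=g=0$ to this $\varphi$ and fixing once and for all admissible values $\lambda=\lambda_1$, $s=s_1$ (absorbing the $a\varphi$ and $b\varphi$ terms on the right into the left, which is legitimate since the $s^3\lambda^4\int_{\Omega_T}e^{-2s\alpha}\xi^3|\varphi|^2$ and $s^3\lambda^3\int_{\Gamma_T}e^{-2s\alpha}\xi^3|\varphi|^2$ terms dominate $\int e^{-2s\alpha}|a\varphi|^2$, $\int e^{-2s\alpha}|b\varphi|^2$ for $s_1$ large, using $\|a\|_\infty,\|b\|_\infty\le R$), the left-hand side controls in particular
\[
\int_{\Omega_T} e^{-2s_1\alpha}\xi^3|\varphi|^2\,dx\,dt \;\le\; C\int_{\omega_T} e^{-2s_1\alpha}\xi^3|\varphi|^2\,dx\,dt \;\le\; C'\int_{\omega_T}|\varphi|^2\,dx\,dt,
\]
where the last step uses that $e^{-2s_1\alpha}\xi^3$ is bounded on $\overline\Omega\times(0,T)$.

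Next I would bound $\|\varphi(0,\cdot)\|_{\L^2}^2$ by an integral of $|\varphi|^2$ over an intermediate time slab $(T/4,3T/4)\times\Omega$ (say), on which $e^{-2s_1\alpha}\xi^3$ is bounded below by a positive constant. The key tool is the dissipativity of the problem: since $A$ is selfadjoint and negative (Proposition~\ref{prop:semigroup}) and $B(t)$ is bounded by $R$, a Gronwall argument applied to $t\mapsto\|\varphi(t)\|_{\L^2}^2$ for the \emph{backward} equation gives $\|\varphi(t_1)\|_{\L^2}\le e^{CR(t_2-t_1)}\|\varphi(t_2)\|_{\L^2}$ for $t_1\le t_2$; in particular $\|\varphi(0,\cdot)\|_{\L^2}^2\le C\|\varphi(t,\cdot)\|_{\L^2}^2$ for every $t\in(T/4,3T/4)$, hence (integrating in $t$ over that slab)
\[
\|\varphi(0,\cdot)\|_{\L^2}^2 \;\le\; C\int_{T/4}^{3T/4}\|\varphi(t,\cdot)\|_{\L^2}^2\,dt \;\le\; C\int_{T/4}^{3T/4}\!\!\int_\Omega|\varphi|^2\,dx\,dt + C\int_{T/4}^{3T/4}\!\!\int_\Gamma|\varphi|^2\,dS\,dt.
\]
Combining this with the Carleman-derived bound on $\int_{\Omega_T}e^{-2s_1\alpha}\xi^3|\varphi|^2$ (and its boundary analogue $\int_{\Gamma_T}e^{-2s_1\alpha}\xi^3|\varphi|^2$, which is also present on the left of Lemma~\ref{lem:carleman}) yields \eqref{eq:observa}. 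Note that the boundary observation integral $\int_{\Gamma_T}e^{-2s_1\alpha}\xi^3|\varphi|^2$ is controlled by the \emph{bulk} observation $\int_{\omega_T}|\varphi|^2$ via the right-hand side of Lemma~\ref{lem:carleman}, so no boundary control is needed — this is exactly what makes the surface-diffusion term pay off.

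Finally, \eqref{eq:observa-y} follows by applying \eqref{eq:observa} to the backward problem with coefficients $\hat a(t,x)=a(T-t,x)$, $\hat b(t,x)=b(T-t,x)$: under $t'=T-t$ the forward problem \eqref{eq:4}--\eqref{eq:6} with $f=g=v=0$ and initial datum $y_0$ transforms into a backward problem of the form \eqref{eq:15}--\eqref{eq:18}, $\varphi(t)=y(T-t)$ solves it with $\varphi_T=y_0$, and $\varphi(0,\cdot)=y(T,\cdot)$, while $\int_{\omega_T}|\varphi|^2 = \int_{\omega_T}|y|^2$; the constant is unchanged since $\|\hat a\|_\infty=\|a\|_\infty$, $\|\hat b\|_\infty=\|b\|_\infty$. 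The uniformity of $C=C(R)$ in $a,b$ is inherited from the corresponding uniformity in Lemma~\ref{lem:carleman} and in the Gronwall constant. The main obstacle is bookkeeping: making sure the absorption of the potential terms and of the various lower-order boundary integrals is done before fixing $s_1,\lambda_1$, and that the weights $e^{-2s_1\alpha}\xi^3$ are genuinely bounded above on all of $\overline\Omega\times(0,T)$ (true because $\alpha\to+\infty$ like $(t(T-t))^{-1}$ while $\xi$ blows up only polynomially faster, so $e^{-2s_1\alpha}\xi^3\to0$ at $t=0,T$) and bounded below on the intermediate slab; none of this is deep, but it must be stated carefully.
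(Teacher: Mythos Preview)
Your proposal is correct and follows essentially the same route as the paper's proof: reduce to $\varphi_T\in\H^1$ by density, apply the Carleman estimate of Lemma~\ref{lem:carleman} with fixed $s=s_1$, $\lambda=\lambda_1$, use the energy bound $\|\varphi(0,\cdot)\|_{\L^2}\le C\|\varphi(t,\cdot)\|_{\L^2}$ (which is the backward version of \eqref{eq:esti-1}) integrated over $(T/4,3T/4)$, and then invoke the time reversal $t'=T-t$ for \eqref{eq:observa-y}. One small simplification: your discussion of ``absorbing the $a\varphi$ and $b\varphi$ terms'' is unnecessary, because Lemma~\ref{lem:carleman} is already stated with the potentials inside the right-hand side, so for a strong solution of the homogeneous backward problem \eqref{eq:15}--\eqref{eq:18} the integrands $|\partial_t\varphi+d\Delta\varphi-a\varphi|^2$ and $|\partial_t\varphi+\delta\Delta_\Gamma\varphi-d\partial_\nu\varphi-b\varphi|^2$ vanish identically and the constant $C=C(R)$ is provided directly by that lemma.
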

\begin{proof} 
We argue as in Section~1.2 of \cite{FCG}. Recall from Subsection~\ref{sec:well} that the backward 
problem \eqref{eq:15}--\eqref{eq:18} possesses a unique mild solution $\ph\in C([0,T];\L^2)$ which 
depends continuously in $\L^2$ on $\ph_T$. By density we can thus restrict ourselves to final values
$\varphi_T\in \H^1$, so that $\varphi\in \E_1$ is a strong solution due to the backward version of 
Proposition~\ref{prop:mild}.  For fixed $\lambda = \lambda_1$ and $s= s_1$, the Carleman estimate from 
Lemma~\ref{lem:carleman} thus implies
\begin{align*}
\iint_{(T/4,3T/4)\times\Omega}|\ph|^2\,dx\,dt +  \iint_{(T/4, 3T/4)\times \Gamma} |\ph_\Gamma|^2\,dS\,dt 
    \leq  C \int_{\omega_T}  |\varphi|^2 \, dx\,dt.
\end{align*}
On the other hand, the backward version of \eqref{eq:esti-1} on the time interval $[0,t]$ yields
$\|\varphi(0,\cdot)\|_{\mathbb L^2} \leq C \|\varphi(t,\cdot)\|_{\mathbb L^2}$, for all $t\in (0,T)$.
Integrating this inequality over $(T/4, 3T/4)$, we infer
\[\|\varphi(0,\cdot)\|_{\mathbb L^2} \leq C \int_{T/4}^{3T/4} \|\ph(t)\|_{\L^2}\, dt,\]
from which \eqref{eq:observa} follows. The first assertion is a direct consequence the estimates in display. The second one then follows by
a simple transformation as explained in Subsection~\ref{sec:well}.
 \end{proof}

We now establish the  null controllability of the linear system, where we allow for inhomogeneities 
with exponential decay at $t=0$ and $t=T$. To this end, we introduce the weighted $L^2$-spaces
\begin{align*}
Z_{\Omega} &= \{f\in L^2(\Omega_T): e^{s\alpha} \xi^{-3/2}f\in L^2(\Omega_T)\},  \qquad 
\langle f_1,f_2\rangle_{Z_{\Omega}} = \int_{\Omega_T} f_1f_2 \,e^{2s\alpha} \xi^{-3}\,dx\,dt,\\
Z_{\Gamma} &= \{g\in L^2(\Gamma_T): e^{s\alpha} \xi^{-3/2}g\in L^2(\Gamma_T)\}, \qquad 
\langle g_1,g_2\rangle_{Z_{\Gamma}} = \int_{\Gamma_T} g_1g_2 \,e^{2s\alpha} \xi^{-3}\,dS\,dt.
\end{align*}
In Proposition~\ref{lem:min} we weaken the assumptions on $f$ and $g$, requiring decay only at $t= T$.

\begin{theorem} \label{thm:control-linear} 
For all data $y_0\in \L^2$, $f\in Z_\Omega$ and 
$g\in Z_\Gamma,$ there is a control $v\in L^2(\omega_T)$ such that the mild solution $y$ of 
\eqref{eq:4}--\eqref{eq:6} satisfies $y(T,\cdot) = 0.$
\end{theorem}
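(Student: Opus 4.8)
The plan is to obtain null controllability of \eqref{eq:4}--\eqref{eq:6} from the observability estimate for the dual problem via the classical penalization (Fenchel--Rockafellar) duality argument, adapted to handle the inhomogeneities $f\in Z_\Omega$ and $g\in Z_\Gamma$. First I would reduce to the homogeneous case by splitting $y=y_1+y_2$, where $y_1$ is the mild solution of \eqref{eq:4}--\eqref{eq:6} with $v=0$ (and the given $f,g,y_0$), and $y_2$ solves the system with $v$ as control, $f=g=0$, and initial value $0$; then it suffices to steer $y_2$ to $-y_1(T,\cdot)$ at time $T$, i.e.\ it suffices to prove exact-to-zero controllability of the homogeneous problem from an arbitrary target in $\L^2$, provided one first checks that $y_1(T,\cdot)\in\L^2$, which follows from Proposition~\ref{prop:mild}(a) once we know $f\in L^2(\Omega_T)$, $g\in L^2(\Gamma_T)$ (true since $e^{s\alpha}\xi^{-3/2}\to\infty$ makes $Z_\Omega\embed L^2(\Omega_T)$ continuously on compact time subintervals, and near $t=0,T$ the weight forces decay). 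Actually the cleanest route is the standard one: fix the target and minimize, over $\varphi_T\in\L^2$, the functional
\[
J_\varepsilon(\varphi_T)=\tfrac12\int_{\omega_T}|\varphi|^2\,dx\,dt
 +\varepsilon\|\varphi_T\|_{\L^2}
 +\langle y_1(T,\cdot),\varphi_T\rangle_{\L^2}
 +\text{(duality pairings of $f,g$ with $\varphi$)},
\]
where $\varphi$ is the mild solution of the backward homogeneous system \eqref{eq:15}--\eqref{eq:18} with final value $\varphi_T$.

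The key steps, in order, are: (1) establish the duality identity relating the solution $y$ of \eqref{eq:4}--\eqref{eq:6} (with control $v$) at time $T$ to $\varphi$; concretely, multiplying \eqref{eq:4}--\eqref{eq:5} by $\varphi$, integrating over $\Omega_T$ and $\Gamma_T$ and using the self-adjointness of $A$ together with the weak formulation from Definition~\ref{def:solutions}(c), one gets
\[
\langle y(T,\cdot),\varphi_T\rangle_{\L^2}
=\int_{\omega_T} v\,\varphi\,dx\,dt
 +\langle y_0,\varphi(0,\cdot)\rangle_{\L^2}
 +\int_{\Omega_T}f\varphi\,dx\,dt+\int_{\Gamma_T}g\varphi\,dS\,dt;
\]
(2) show $J_\varepsilon$ is continuous, strictly convex and coercive on $\L^2$ — coercivity being exactly where the observability estimate \eqref{eq:observa} of Proposition~\ref{lem:observa} enters, controlling $\|\varphi(0,\cdot)\|_{\L^2}$ (hence, via backward well-posedness, $\|\varphi_T\|_{\L^2}$) by $\|\varphi\|_{L^2(\omega_T)}$, and where the weighted integrability of $f,g$ is needed to bound the pairing terms by $\|\varphi\|_{L^2(\omega_T)}$ using the weighted Carleman bound on $\varphi$; (3) let $\varphi_T^\varepsilon$ be the minimizer, write the Euler--Lagrange equation, and read off the control $v_\varepsilon=\varphi^\varepsilon\one_\omega$ (the restriction to $\omega_T$ of the dual solution); (4) derive from the minimality $J_\varepsilon(\varphi_T^\varepsilon)\le J_\varepsilon(0)=0$ a uniform-in-$\varepsilon$ bound on $\|v_\varepsilon\|_{L^2(\omega_T)}$ and the almost-controllability estimate $\|y^\varepsilon(T,\cdot)\|_{\L^2}\le\varepsilon$; (5) pass to the limit $\varepsilon\to0$: extract a weakly convergent subsequence $v_\varepsilon\rightharpoonup v$ in $L^2(\omega_T)$, use continuity of the solution map (Proposition~\ref{prop:mild}(a)) to get $y^\varepsilon(T,\cdot)\rightharpoonup y(T,\cdot)$, and conclude $y(T,\cdot)=0$.

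The main obstacle I anticipate is step (2), specifically proving coercivity of $J_\varepsilon$ in the presence of the inhomogeneities: one must bound $\big|\int_{\Omega_T}f\varphi\big|+\big|\int_{\Gamma_T}g\varphi\big|$ by (a small multiple of) $\|\varphi\|_{L^2(\omega_T)}$ plus lower-order terms. This requires writing $\int f\varphi=\int (e^{s\alpha}\xi^{-3/2}f)(e^{-s\alpha}\xi^{3/2}\varphi)$, applying Cauchy--Schwarz, and absorbing $\|e^{-s\alpha}\xi^{3/2}\varphi\|_{L^2(\Omega_T)}$ and its boundary analogue via the left-hand side of the Carleman estimate in Lemma~\ref{lem:carleman} (the terms $s^3\lambda^4\int e^{-2s\alpha}\xi^3|\varphi|^2$ and $s^3\lambda^3\int_{\Gamma_T}e^{-2s\alpha}\xi^3|\varphi|^2$), whose right-hand side — since $f=g=0$ for the dual system — reduces to $Cs^3\lambda^4\int_{\omega_T}e^{-2s\alpha}\xi^3|\varphi|^2\le C'\|\varphi\|_{L^2(\omega_T)}^2$ because $e^{-2s\alpha}\xi^3$ is bounded on $\omega_T$. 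Once the pairing terms are thus controlled, strict convexity and coercivity are routine, the minimizer exists by the direct method, and the rest of the argument is standard; the only remaining care is verifying that the limit control $v$ indeed yields a mild solution with $y(T,\cdot)=0$, which follows from the weak continuity established in Proposition~\ref{prop:mild}(a) together with part (e) on distributional solutions with vanishing end value.
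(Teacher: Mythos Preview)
Your penalized HUM approach is correct and would work, but it is a genuinely different route from the paper's. The paper proceeds abstractly: it defines bounded operators $\mathcal T:L^2(\omega_T)\to\L^2$, $v\mapsto\int_0^T S(T,\tau)(-\one_\omega v(\tau),0)\,d\tau$, and $\mathcal S:\L^2\times Z_\Omega\times Z_\Gamma\to\L^2$, $(y_0,f,g)\mapsto S(T,0)y_0+\int_0^T S(T,\tau)(f,g)\,d\tau$, so that null controllability is exactly the range inclusion $\text{range}\,\mathcal S\subset\text{range}\,\mathcal T$. Using Proposition~\ref{prop:mild}(f) one computes $\mathcal T^*\varphi_T=-\one_\omega\varphi$ and $\mathcal S^*\varphi_T=(\varphi(0),e^{-2s\alpha}\xi^3\varphi,e^{-2s\alpha}\xi^3\varphi|_\Gamma)$, where $\varphi$ solves the homogeneous backward problem. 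The observability estimate together with the Carleman estimate give $\|\mathcal S^*\varphi_T\|\le C\|\mathcal T^*\varphi_T\|$, and then the range-inclusion theorem (Theorem~IV.2.2 in \cite{Zab}, essentially the Douglas lemma) finishes. What your approach buys is an explicit constructive control; what the paper's buys is a two-line conclusion once the adjoint inequality is in place. The analytic core---controlling $\|\varphi(0)\|_{\L^2}$ and the weighted norms $\|e^{-s\alpha}\xi^{3/2}\varphi\|$ by $\|\varphi\|_{L^2(\omega_T)}$---is identical.

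Two points in your sketch need repair, though neither is fatal. First, the parenthetical claim that backward well-posedness upgrades the bound on $\|\varphi(0)\|_{\L^2}$ to one on $\|\varphi_T\|_{\L^2}$ is false: the semigroup is smoothing and no such estimate holds. Coercivity of $J_\varepsilon$ instead comes directly from the $\varepsilon\|\varphi_T\|$ term once all linear terms in the functional are bounded by $C\|\varphi\|_{L^2(\omega_T)}$ (which you correctly argue for the $f,g$ pairings via the weighted Cauchy--Schwarz/Carleman trick, and which holds for $\langle y_0,\varphi(0)\rangle$ by observability). Second, your displayed functional double-counts: by the duality identity with $v=0$ one has $\langle y_1(T),\varphi_T\rangle=\langle y_0,\varphi(0)\rangle+\int_{\Omega_T}f\varphi+\int_{\Gamma_T}g\varphi$, so either keep the left side (split formulation) or the right side (direct formulation), not both. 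The direct formulation is cleaner here precisely because it avoids ever needing a bound on $\|\varphi_T\|$.
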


\begin{proof} 
By a duality argument, the assertion will follow  from the observability estimate of the previous
proposition. To this aim, we define the bounded linear operator
$\mathcal T: L^2( \omega_T)\to \mathbb L^2$ by 
\[ \mathcal Tv= \int_0^T S(T,\tau)(-\one_\omega v(\tau),0)\, d\tau,\]
where the solution operators $S(T,\tau)$ are given by Proposition~\ref{prop:mild}. Using that 
$Z_\Omega\times Z_\Gamma\hookrightarrow L^2(\Omega_T) \times L^2(\Gamma_T)$, we also introduce 
the bounded linear operator $\mathcal S: \mathbb L^2\times Z_\Omega\times Z_\Gamma \to \L^2$ by 
\[\mathcal S(y_0, f, g) = S(T,0)y_0+ \int_0^T S(T,\tau)(f(\tau),g(\tau))\,d\tau.  \]
Due to Proposition~\ref{prop:mild}(a), the function $\mathcal S (y_0,f,g) - \mathcal T v $ is the final 
value $y(T)$ of the mild solution $y$ to the inhomogeneous system \eqref{eq:4}--\eqref{eq:6}. Note that 
the adjoint $\mathcal T^*: \mathbb L^2 \to L^2(\omega_T)$ is given by 
\[ \mathcal T^* \varphi_T = -\one_\omega \varphi,\] 
where  $\varphi= S(T,\cdot)^*\ph_T$ is the mild solution  of the  homogeneous backward problem 
\eqref{eq:15}--\eqref{eq:18} with final value $\ph_T$, see Proposition~\ref{prop:mild}(f). 
 Taking the weights into account, one determines the adjoint $\mathcal S^*: \mathbb L^2 \to 
 \L^2\times Z_\Omega\times Z_\Gamma$ as
\[\mathcal S^* \varphi_T = (\varphi(0),e^{-2s\alpha} \xi^3 \varphi, e^{-2s\alpha} \xi^3 
                             \varphi|_{\Gamma}).\]
Recall from Subsection~\ref{sec:well} that $\ph\in\E_1$  is a strong solution of the backward
problem if $\ph_T\in\H^1$. The observability estimate from Proposition~\ref{lem:observa} 
 and the Carleman estimate from Lemma~\ref{lem:carleman} now imply that
\begin{align*}
\|\mathcal S^* \varphi_T\|_{\mathbb L^2\times Z_\Omega\times Z_\Gamma}^2 
& =\|\varphi(0)\|_{\L^2}^2 + \int_{\Omega_T} e^{-2s\alpha} \xi^3 \varphi^2 \,dx\,dt 
    + \int_{\Gamma_T} e^{-2s\alpha} \xi^3 \varphi^2 \,dS\,dt\\
& \leq C \int_{\omega_T} |\varphi|^2 \,dx\,dt = C \|\mathcal T^* \varphi_T\|_{L^2(\omega_T)},
\end{align*} 
at first for $\varphi_T\in \mathbb \H^1$, but then for $\varphi_T\in\L^2$ by approximation.  
Theorem~IV.2.2 of \cite{Zab} thus shows that the range of $\mathcal T$ contains that of $\mathcal S$; i.e.,
for all $y_0\in \mathbb L^2$, $f\in Z_\Omega$ and $g\in Z_\Gamma$ there is 
a control $v\in L^2(\omega_T)$ such that $\mathcal S (y_0,f,g) = \mathcal T v$. Therefore,  
$y(T) = \mathcal S (y_0,f,g) - \mathcal T v = 0$, as asserted.
 \end{proof}

The proof of null controllability in the semilinear case is based on a fixed point argument involving 
a continuous operator mapping the initial value to a null control of the inhomogeneous linear problem. 
To obtain such a operator, we have to single out a special control. As in Theorem~2.1 of \cite{I95} we choose 
a control having a minimal norm.

To that purpose, for given $a \in L^\infty(\Omega_T)$ and $b\in L^\infty(\Gamma_T)$ we define the 
backward parabolic operator 
$$L^* = \left ( \begin{array}{cc} -\partial_t - d \Delta + a  & 0 \\ d\partial_\nu & 
           -\partial_t - \delta \Delta_\Gamma +b   \end{array}\right)
      : \ \E_1 \to L^2(0,T;\L^2).$$
As in \cite{I95} we consider the weights 
$$\tilde \alpha(t,x) = \frac{t}{T}\,\alpha(t,x), \quad \tilde \xi(t,x) = \frac{t}{T}\,\xi (t,x),
\qquad t\in (0,T), \quad x\in \overline{\Omega},$$
which only blow up at $t=T$. We introduce  the weighted spaces
\begin{align*}
\tilde X &:= \big\{y\in L^2(0,T; \mathbb L^2)\,:\, e^{s \tilde \alpha} y \in L^2(0,T; \mathbb L^2)\big\},\\
\tilde{Z}_{\Omega} &:= \big\{f\in L^2(\Omega_T): e^{s\tilde{\alpha}} \tilde{\xi}^{-3/2}f 
    \in L^2(\Omega_T)\big\},\\
\tilde{Z}_{\Gamma}&:= \big\{g\in L^2(\Gamma_T): e^{s\tilde{\alpha}} \tilde{\xi}^{-3/2}g
 \in L^2(\Gamma_T)\big\}, 
 \end{align*}
endowed with the corresponding scalar products as above. Observe that the weights force decay only  
at $t = T$. We further define the functional $J$ by
$$J: \tilde X\times L^2(\omega_T)\to \R, \qquad J(y,v) 
    := \frac12 \int_{0}^T \|e^{s\tilde\alpha(t,\cdot)} y(t,\cdot)\|_{\mathbb L^2}^2\,dt 
   + \frac12 \int_{\omega_T } v^2 \,dx \,d t. $$

The general strategy of the following proof is the same as in \cite{I95}, even though we have not been able to 
obtain the Lagrange multiplier as in (2.5)--(2.6) of \cite{I95}. Hence our arguments differ in certain
important points from \cite{I95}, and we thus give all the details.

\begin{proposition}\label{lem:min} 
Let  $y_0 \in \mathbb \L^2$, $f\in \tilde{Z}_\Omega$ and 
$g\in \tilde{Z}_\Gamma$.  Then the functional $J$ has a unique minimizer 
  $(y,v)\in  \tilde X \times L^2(\omega_T)$ on the set
$$M =\big \{ (y,v)\in  \tilde X\times L^2(\omega_T)\,:\, (y,v) \text{ is a distributional
solution of  \eqref{eq:4}--\eqref{eq:6} with $y(T,\cdot) = 0$} \big\}.$$
There is a constant $C> 0$ such that for all 
 $y_0 \in \mathbb \L^2$, $f\in \tilde{Z}_\Omega$ and $g\in \tilde{Z}_\Gamma$ we have
\begin{equation}\label{est:XZ}
\|y\|_{\tilde X} + \|v\|_{L^2(\omega_T)} 
   \leq C\big( \|y_0\|_{\L^2} + \|f\|_{\tilde{Z}_\Omega} + \|g\|_{\tilde{Z}_\Gamma}\big).
\end{equation}
Given $R > 0$, the constant $C=C(R)$ can be chosen independently of all $a,b$ with $\|a\|_\infty,\|b\|_\infty\le R$.
\end{proposition}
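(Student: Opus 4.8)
The plan is to first set up the minimization as a genuine constrained convex problem on a Hilbert space and then derive the estimate \eqref{est:XZ} from a Carleman-type inequality adapted to the weights $\tilde\alpha, \tilde\xi$. First I would reformulate $M$: using Proposition~\ref{prop:mild}(d)(e), the pair $(y,v) \in \tilde X \times L^2(\omega_T)$ lies in $M$ if and only if $y$ is the mild solution of \eqref{eq:4}--\eqref{eq:6} with this $v$ \emph{and} the solution satisfies $y(T,\cdot)=0$; by the solution formula \eqref{eq:VK} this is the affine constraint $\mathcal S(y_0,f,g) = \mathcal T v$, where $\mathcal S, \mathcal T$ are the operators from the proof of Theorem~\ref{thm:control-linear}. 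By that theorem $M$ is nonempty (note $\tilde Z_\Omega \subset Z_\Omega$, $\tilde Z_\Gamma \subset Z_\Gamma$ since the untilded weights blow up also at $t=0$, making the untilded decay condition stronger), and it is a closed affine subspace of $\tilde X \times L^2(\omega_T)$ because $y$ depends continuously on $v$ in $C([0,T];\L^2)$ and hence, after multiplication by the bounded weight $e^{s\tilde\alpha}$ on compact subintervals together with a decay estimate near $t=T$, in $\tilde X$. Since $J$ is strictly convex, coercive and lower semicontinuous on $M$, it has a unique minimizer $(y,v)$; this gives existence and uniqueness.

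Next I would establish \eqref{est:XZ}. The natural route is to adopt Imanuvilov's argument from \cite{I95}: one writes the Euler--Lagrange/duality characterization of the minimizer and couples it with the backward adjoint problem. Concretely, I would show that the minimal value $J(y,v)$ is controlled by $\|y_0\|_{\L^2}^2 + \|f\|_{\tilde Z_\Omega}^2 + \|g\|_{\tilde Z_\Gamma}^2$ by testing against the mild solution $\varphi$ of the \emph{homogeneous} backward problem \eqref{eq:15}--\eqref{eq:18} with a suitable final datum, integrating by parts via the distributional formulation \eqref{eq:distri}, and invoking a Carleman estimate for the backward operator with the tilded weights. The point is that the tilded weights $\tilde\alpha,\tilde\xi$ agree with $\alpha,\xi$ near $t=T$ and are bounded (bounded below away from $0$, actually comparable to a positive constant) near $t=0$, so Lemma~\ref{lem:carleman} applied on, say, $(T/2,T)$, combined with the energy estimate \eqref{eq:esti-1} on $(0,T/2)$ to absorb the behaviour near $t=0$, yields
\[
 \int_0^T e^{-2s\tilde\alpha}\big(\xi^3|\varphi|^2 + \xi^3|\varphi|_\Gamma|^2\big)
   \le C\Big(\int_{\omega_T} e^{-2s\tilde\alpha}\xi^3|\varphi|^2 + \text{(inhomogeneity terms)}\Big),
\]
and with $f=g=0$ in the adjoint equation this is exactly the observability-type bound needed to make the duality work. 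Then the standard Fenchel/Lax--Milgram argument (as in Theorem~IV.2.2 of \cite{Zab}, or directly minimizing a dual functional) produces $(y,v)$ with $\|v\|_{L^2(\omega_T)}^2 + \|e^{s\tilde\alpha} y\|_{L^2(0,T;\L^2)}^2 \le C(\|y_0\|_{\L^2}^2 + \|f\|_{\tilde Z_\Omega}^2 + \|g\|_{\tilde Z_\Gamma}^2)$, which is \eqref{est:XZ}. Uniformity of $C=C(R)$ in $a,b$ follows because every ingredient — Lemma~\ref{lem:carleman}, Proposition~\ref{lem:observa}, and the energy estimate \eqref{eq:esti-1} — carries such a uniform constant.

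The main obstacle I anticipate is precisely the point flagged in the remark preceding the proposition: one cannot simply read off a Lagrange multiplier as in (2.5)--(2.6) of \cite{I95}, because the constraint $y(T,\cdot)=0$ is not obviously closed-range in a way that yields a multiplier in a good space, and the weight $e^{s\tilde\alpha}$ forces one to work in spaces where the backward adjoint solution need not have the requisite regularity. The workaround is to avoid multipliers entirely and instead argue by \emph{approximation/penalization}: for each $\varepsilon>0$ minimize $J(y,v) + \tfrac{1}{2\varepsilon}\|y(T,\cdot)\|_{\L^2}^2$ over all $(y,v)$ with $y$ the mild solution, obtain uniform bounds from the Carleman estimate (here the constant must be shown $\varepsilon$-independent, which is the delicate bookkeeping), pass to the limit $\varepsilon\to 0$ using weak compactness in $\tilde X \times L^2(\omega_T)$ and lower semicontinuity, and identify the limit as the desired minimizer on $M$. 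Checking that the weak limit still satisfies the distributional formulation with vanishing end value — i.e. that one may pass to the limit in \eqref{eq:distri} tested against all $\varphi\in\E_1$ — and that the minimality transfers, is where the real care is needed, but no genuinely new idea beyond \cite{I95} and the Carleman estimate of Lemma~\ref{lem:carleman} should be required.
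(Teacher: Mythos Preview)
Your proposal contains a genuine gap at the very first step. You claim $\tilde Z_\Omega \subset Z_\Omega$ because ``the untilded decay condition is stronger,'' but a stronger condition defines a \emph{smaller} space: the correct inclusion is $Z_\Omega \subset \tilde Z_\Omega$. Functions in $\tilde Z_\Omega$ need decay only at $t=T$, while $Z_\Omega$ requires decay at both endpoints. Consequently, for general $f\in\tilde Z_\Omega$, $g\in\tilde Z_\Gamma$ you cannot invoke Theorem~\ref{thm:control-linear} to conclude that $M$ is nonempty, and your existence argument collapses. The paper deals with this by first treating $f,g$ supported in $(0,T]$ (which then do lie in $Z_\Omega,Z_\Gamma$), and only at the end approximating general $f,g$ in $\tilde Z_\Omega,\tilde Z_\Gamma$ by such functions.

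Your proposed penalization also does not work as written. You suggest minimizing $J(y,v)+\tfrac{1}{2\varepsilon}\|y(T,\cdot)\|_{\L^2}^2$ over mild solutions, but $J(y,v)<\infty$ already forces $y\in\tilde X$, and since the mild solution lies in $C([0,T];\L^2)$ while $e^{s\tilde\alpha}$ blows up like $e^{c/(T-t)}$, the condition $e^{s\tilde\alpha}y\in L^2(0,T;\L^2)$ is impossible unless $y(T,\cdot)=0$. Thus the penalization term is never active: the feasible set of your penalized problem is $M$ itself, and you have not circumvented anything. The paper's regularization is different and avoids this trap: it replaces the weight $e^{s\tilde\alpha}$ by a \emph{bounded} weight $\rho_\varepsilon$, so that the regularized functional $J_\varepsilon$ is defined on all of $L^2(0,T;\L^2)\times L^2(\omega_T)$, minimizes over the (nonempty, for compactly supported $f,g$) set $\{G(y,v)=0\}$, derives an Euler--Lagrange condition placing $(\rho_\varepsilon^2 y_\varepsilon,v_\varepsilon)$ in the closure of the range of $(G')^*$, and uses the Carleman estimate on the approximating sequence $\varphi_n$ to obtain bounds uniform in $\varepsilon$.
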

\begin{proof} Since $M$ is a closed convex subset of $X\times L^2(\omega_T)$ and $J^{1/2}$ is an 
equivalent norm of $\tilde X\times L^2(\omega_T)$, the functional $J$ has a unique minimizer 
for  given  $y_0$, $f$ and $g$ if we can show that the set $M$ is nonempty. We will construct 
a function $(y_*,v_*)\in M$ as a weak limit of minimizers of regularized problems.

\emph{Step 1.} We first take $f\in \tilde{Z}_\Omega$ and $g\in \tilde{Z}_\Gamma$ supported in $(0,T]$. 
Hence, $f\in Z_\Omega$ and $g\in Z_\Gamma$. For each $\varepsilon \in(0,1]$, we introduce a new weight
$$\rho_\varepsilon(t,x) = e^{s  \tilde \alpha(t,x)\frac{T-t}{T-t+\varepsilon}},
    \qquad t\in (0,T),\quad x\in \overline{\Omega}.$$
which is bounded and strictly positive on $[0,T]\times \overline{\Omega}$. We define the corresponding
functional 
$J_\varepsilon: L^2(0,T; \L^2)\times L^2(\omega_T) \to \R$ by
$$J_\varepsilon(y,v) := \frac12 \int_{0}^T \|\rho_\varepsilon(t,\cdot) y(t,\cdot)\|_{\mathbb L^2}^2\,dt 
  + \frac12\int_{\omega_T } v^2 \,dx \,d t.$$
We further consider the map $G:  L^2(0,T;\L^2) \times L^2(\omega_T) \to \E_{1}^*$ given by
$$G(y,v)[\varphi] = \langle y, L^*\varphi\rangle_{L^2(0,T;\L^2)} - \langle y_0, \varphi(0)\rangle_{\L^2} 
        - \langle f, \varphi\rangle_{L^2(\Omega_T)} - \langle g,\varphi\rangle_{L^2(\Gamma_T)} 
         - \langle v,\varphi\rangle_{L^2(\omega_T)}$$
for $\varphi\in \E_{1}$. We point out that $(y,v)\in L^2(0,T;\L^2) \times L^2(\omega_T)$ is a 
distributional solution of \eqref{eq:4}--\eqref{eq:6} with $y(T,\cdot) = 0$ if and only if $G(y,v) = 0$. 
Observe that  $J_\varepsilon^{1/2}$ defines an equivalent norm on $L^2(0,T;\L^2) \times L^2(\omega_T)$
and that $\{(y,v)\in L^2(0,T;\L^2) \times L^2(\omega_T)\,:\,G(y,v) = 0\}$ is a closed convex subset of  
$L^2(0,T;\L^2) \times L^2(\omega_T)$. This subset is nonempty by Theorem~\ref{thm:control-linear}, since 
$f\in Z_\Omega$ and $g\in Z_\Gamma$. Hence, $J_\varepsilon$ has a unique minimizer 
$(y_\varepsilon,v_\varepsilon)$ on $\{G(y,v) = 0\}$ for every $\e \in(0,1]$.

\emph{Step 2.} We show that the functions $(\rho_\varepsilon y_\varepsilon,v_\varepsilon)$
are uniformly bounded in $L^2(0,T;\L^2)\times L^2(\omega_T)$ for $\e \in(0,1]$. To this end, we note that
each functional $J_\varepsilon$ is continuously differentiable and its derivative at $(y,v)$ is given by
$$J_\varepsilon'(y,v)[z,u] = \langle \rho_\varepsilon^2 y,z\rangle_{L^2(0,T;\L^2)} 
              + \langle v,u\rangle_{L^2(\omega_T)}, \qquad (z,u)\in L^2(0,T; \L^2)\times L^2(\omega_T).$$
The map $G$ is affine  and continuous. Its derivative is  the linear operator 
$G': L^2(0,T;\L^2) \times L^2(\omega_T) \to \E_1^*$ acting as  
$$G'(y,v)[\varphi]=  \langle y, L^*\varphi\rangle_{L^2(0,T;\L^2)} -\langle v,\varphi\rangle_{L^2(\omega_T)}
 \qquad \text{for \ } (y,v)\in  L^2(0,T;\L^2)\times L^2(\omega_T) \text{ \ and \ } \varphi\in \E_1.$$
For all $(z,u)\in\ker G'$ and $\sigma\in\R$ we have $J_\e(y_\e,v_\e)\le J_\e\big((y_\e,v_\e)+\sigma(z,u)\big)$,
since $(y_\e,v_\e)$ is the minimizer on $\{G(y,v) = 0\}$. At the minimum $\sigma=0$, we derive that 
$J_\varepsilon'(y_\varepsilon,v_\varepsilon)[z,u] = 0$ for all $(z,u)\in \ker G'$. Therefore
$$(\rho_\varepsilon^2 y_\varepsilon, v_\varepsilon) \in (\ker G')^\perp = \overline{\text{range}\, (G')^*}.$$
The adjoint $(G')^*:\E_1\to L^2(0,T;\L^2) \times L^2(\omega_T)$ is given by $(G')^*\varphi 
= (L^*\varphi, -\one_\omega\varphi).$ Thus there exists a sequence $(\varphi_n)_n$ in $\E_1$ such that
$$L^* \varphi_n \to \rho_\varepsilon^2 y_\varepsilon \text{ \ in }L^2(0,T;\L^2) \qquad \text{and} \qquad
          \qquad -\one_\omega \varphi_n \to v_\varepsilon\text{ \ in } L^2(\omega_T),$$
 as $n\to\infty$. Since $(y_\varepsilon, v_\varepsilon)$ is a distributional solution of 
\eqref{eq:4}--\eqref{eq:6} with $y_\e(T,\cdot) = 0$, we obtain
\begin{align}
\|\rho_\varepsilon y_\varepsilon&\|_{L^2(0,T;\L^2)}^2 + \|v_\varepsilon\|_{L^2(\omega_T)}^2
 = \langle y_\varepsilon, \rho_\varepsilon^2 y_\varepsilon\rangle_{L^2(0,T;\L^2)} 
        + \langle v_\varepsilon, v_\varepsilon\rangle_{L^2(\omega_T)}\notag\\
& = \lim_{n\to \infty} \big( \langle y_\varepsilon, L^*\varphi_n\rangle_{L^2(0,T;\L^2)} 
      - \langle v_\varepsilon, \varphi_n\rangle_{L^2(\omega_T)}\big)\notag\\
& =\lim_{n\to \infty}\big( \langle y_0, \varphi_n(0)\rangle_{\L^2} 
  + \langle f,\varphi_n\rangle_{L^2(\Omega_T)} + \langle g, \varphi_n\rangle_{L^2(\Gamma_T)}\big)\notag\\
&\leq  \limsup_{n\to \infty}\big(\|\varphi_n(0)\|_{\L^2}
                  +\|e^{-s\wt{\alpha}}\wt{\xi}^{3/2}\varphi_n\|_{L^2(0,T;\L^2)}\big)
  \big( \|y_0\|_{\L^2}  + \|f\|_{\wt{Z}_\Omega} + \|g\|_{\wt{Z}_\Gamma}\big).\label{m670}
\end{align}
The backward version of estimate \eqref{eq:esti-1} on $[0,t]$ yields
\begin{align}\label{est:phi-n}
\|\varphi_n(\tau)\|_{\L^2}^2 &\, \leq C\big( \|\ph_n(t)\|_{\L^2}^2 + \|L^*\ph_n\|_{L^2(\tau,t;\L^2)}^2\big)
\end{align}
for all $0\le \tau \le t\le T$, where $C$ is uniform in $a$ and $b$. We take $ \tau=0$ and integrate over 
$t\in (T/4,3T/4)$ in \eqref{est:phi-n}. The Carleman estimate then implies
\begin{align*}
\|\varphi_n(0)\|_{\L^2}^2 
   & \leq C\ \int_{T/4}^{3T/4} \|\varphi_n(t)\|_{\L^2}^2\,dt +  C \|L^*\varphi_n\|_{L^2(0,3T/4;\L^2)}^2\\
& \leq C \, \|e^{-s\alpha}\xi^{3/2} \varphi_n\|_{L^2(T/4,3T/4;\L^2)}^2 
         + C\, \| e^{-s\tilde\alpha} L^*\varphi_n\|_{L^2(0,3T/4; \L^2)}^2 \\
& \leq C \, \|e^{-s\alpha}\xi^{3/2} \varphi_n\|_{L^2(0,T;\L^2)}^2 
         + C\, \| e^{-s\tilde\alpha} L^*\varphi_n\|_{L^2(0,T; \L^2)}^2 \\
& \leq C \, \|e^{-s\alpha}\xi^{3/2} \varphi_n\|_{L^2(\omega_T)}^2
         +   C\, \| e^{-s\alpha} L^*\varphi_n\|_{L^2(0,T; \L^2)}^2 
         + C\, \| e^{-s\tilde\alpha} L^*\varphi_n\|_{L^2(0,T; \L^2)}^2 \\
& \leq C \int_{\omega_T}  |\varphi_n|^2\,dx\,dt + C\, \| e^{-s\tilde\alpha} L^*\varphi_n\|_{L^2(0,T;\L^2)}^2,\\
\limsup_{n\to\infty} \|\varphi_n(0)\|_{\L^2}^2 
            &\le C\,\|v_\e\|_{L^2(\omega_T)}^2 + C\,\| \rho_\e y_\e \|_{L^2(0,T; \L^2)}^2,
\end{align*}
 using also that $e^{-s\tilde \alpha}\rho_\e \leq 1$. If we let $\tau\in(0,T/4)$ and $t=\tau+T/4$ in 
\eqref{est:phi-n}, we further derive
\begin{align*}
\|e^{-s\wt{\alpha}}\wt{\xi}^{3/2} \varphi_n\|_{L^2(0,T/4;\L^2)}^2 
   &\le C\, \|\varphi_n\|_{L^2(0,T/4;\L^2)}^2 \\
   &\leq C\,\|\varphi_n\|_{L^2(T/4,T/2;\L^2)}^2+  C\, \|L^*\varphi_n\|_{L^2(0,T/2;\L^2)}^2\\
   &\le    C \,\|e^{-s\alpha}\xi^{3/2} \varphi_n\|_{L^2(T/4,T/2;\L^2)}^2+
          C\,\| e^{-s\tilde\alpha} L^*\varphi_n\|_{L^2(0,T/2; \L^2)}^2,\\
\|e^{-s\wt{\alpha}}\wt{\xi}^{3/2} \varphi_n\|_{L^2(0,T;\L^2)}^2 
& \le C \,\|e^{-s\alpha} \xi^{3/2} \varphi_n\|_{L^2(0,T;\L^2)}^2 
       + C\, \|e^{-s\tilde\alpha}L^*\varphi_n\|_{L^2(0,T;\L^2)}^2.
\end{align*}
As above, the Carleman estimate now yields
\begin{align*}
\limsup_{n\to\infty} \|e^{-s\wt{\alpha}}\tilde{\xi}^{3/2} \varphi_n\|_{L^2(0,T;\L^2)}^2 
& \leq C \limsup_{n\to\infty}(\|\varphi_n\|_{L^2(\omega_T)}^2 + 
            \|e^{-s\wt\alpha} L^*\varphi_n\|_{L^2(0,T;\L^2)}^2)\\
& \le C\,\|v_\e\|_{L^2(\omega_T)}^2 + C\,\| \rho_\e y_\e \|_{L^2(0,T; \L^2)}^2.
\end{align*}
Dividing \eqref{m670} by $\| \rho_\e y_n \|_{L^2(0,T; \L^2)} + \|v_\e\|_{L^2(\omega_T)}$, we arrive at
\begin{equation}\label{m672}
\|y_\e\|_{L^2(0,T;\L^2)} \le \|\rho_\e y_\e\|_{L^2(0,T;\L^2)} + \|v_\varepsilon\|_{L^2(\omega_T)}
   \leq C\big(\|y_0\|_{\L^2}+  \|f\|_{\tilde Z_\Omega} + \|g\|_{\tilde Z_\Gamma}\big),
\end{equation}
where the constant $C$ is uniform in $a$, $b$, and $\e$.

\emph{Step 3.} As a consequence of \eqref{m672} we find $(y_*,v_*)\in \tilde X\times L^2(\omega_T)$ such that,
 up to subsequences, $(y_\e, v_\e)$ tends to $(y_*,v_*)$ weakly in $L^2(0,T;\L^2)\times L^2(\omega_T)$ 
and $\rho_\e y_\e$ converge to  some $z$ weakly in $L^2(0,T;\L^2)$ as $\varepsilon\to 0$. 
Note that $\rho_\e$ tend pointwise to $e^{s\tilde \alpha}$ and $0\le \rho_\e\le e^{s\tilde \alpha}$.
If $\psi\in  L^2(0,T;\L^2)$ is supported in $(0,T)$, the functions $\rho_\e \psi$ thus converge  to  
$e^{s\tilde \alpha} \psi$  in  $L^2(0,T;\L^2)$ by dominated convergence. It follows that 
$z=e^{s\tilde \alpha}y_*$ and hence $y_*\in \tilde X$. The limit $(y_*,v_*)\in \tilde X\times L^2(\omega_T)$ 
is still a distributional solution of \eqref{eq:4}--\eqref{eq:6} with $y_*(T,\cdot) = 0$, i.e., 
$(y_*,v_*)\in M$. 

Therefore,  $J$ has a unique minimizer  $(y,v) \in M$ on the nonempty subset $M$. Using  the weak
convergence and \eqref{m672}, we derive the estimate \eqref{est:XZ} for $(y,v)$ by
\begin{align}
 \| y\|_{\tilde X} + \|v\|_{L^2(\omega_T)} & \le C J(y,v)^{1/2} \leq C J(y_*,v_*)^{1/2}
 \le C (\|e^{s\tilde \alpha} y_*\|_{L^2(0,T;\L^2)} + \|v_*\|_{L^2(\omega_T)})  \notag  \\
 & \leq  C\liminf_{\e\to 0} (\|\rho_\e y_\e\|_{L^2(0,T;\L^2)} + \|v_\e\|_{L^2(\omega_T)})\label{m673}\\
 &\,\leq  C\big(\|y_0\|_{\L^2}+  \|f\|_{\tilde Z_\Omega} + \|g\|_{\tilde Z_\Gamma}\big).\notag
\end{align}

\emph{Step 4.} Finally, let $f\in \tilde{Z}_\Omega$ and $g\in \tilde{Z}_\Gamma$ be the given inhomogeneities. 
Consider the solution set $M$ with respect to $f$ and $g$.  Choose $f_n\in \tilde{Z}_\Omega$ and 
$g_n\in \tilde{Z}_\Gamma$ with compact support in $(0,T]$ such that $f_n\to f$ in $\tilde{Z}_\Omega$ and 
$g_n\to g$ in $\tilde{Z}_\Gamma$ as $n\to \infty$. Let $(y_n,v_n)$ be the corresponding minimizers of $J$ 
obtained in Step 3 (where $M=M_n$ is defined for $f_n$ and $g_n$). Since these functions 
satisfy \eqref{est:XZ}, we find a subsequence such that $(y_{n_j},v_{n_j})$ 
tends to some $(y_*,v_*)$ weakly in $\tilde X\times L^2(\omega_T)$ as $j\to \infty$. The limit $(y_*,v_*)$ 
is a distributional solution of \eqref{eq:4}--\eqref{eq:6} with $y_*(T,\cdot) = 0$ so that $(y_*,v_*)\in M$. 
This implies as before that $J$ has a unique minimizer $(y,v)$ on $M$.  The estimate \eqref{est:XZ} for 
 $(y,v)$ can be shown as in \eqref{m673}.
\end{proof}

\subsection{Semilinear problems.}
With these preparations we can prove the null controllability of the inhomogeneous semilinear problem
\begin{alignat}{2}
\partial_t y -d\Delta y + F(y) & =v \one_{\omega} + f(t,x) & \qquad \quad & \text{in }\Omega_T,\label{eq:10}\\
\partial_t y -\delta \Delta_\Gamma y +d\partial_\nu y + G(y)  & = g(t,x)&  &\text{on }\Gamma_T, \label{eq:11}\\
y(0,\cdot)& = y_0  & &\text{in }\overline{\Omega}. \label{eq:13}
\end{alignat}

\begin{theorem} \label{thm:control-semilinear} Assume $F,G\in C^1(\R)$ satisfy
$$ F(0)=G(0) = 0 \quad\text{ and } \quad  |F(\xi)|+ |G(\xi)| \leq C(1+|\xi|) \quad \text{for \ } \xi\in \R.$$
Then for  all data $y_0\in \H^1$, $f\in \tilde Z_\Omega$ and $g\in \tilde Z_\Gamma$ there is a control 
$v\in L^2(\omega_T)$ such that \eqref{eq:10}--\eqref{eq:13} has a unique strong solution $y\in \E_1\cap \tilde X$ 
with $y(T,\cdot) = 0.$
\end{theorem}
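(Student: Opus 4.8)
The plan is to linearize the nonlinearities and apply Schauder's fixed point theorem, using Proposition~\ref{lem:min} to manufacture, for each frozen argument, a null control together with a uniform bound.

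\emph{Linearization and the fixed point map.} For $z\in L^2(0,T;\L^2)$ with components $(z_\Omega,z_\Gamma)$ I would set
\[ a_z(t,x)=\frac{F(z_\Omega(t,x))}{z_\Omega(t,x)}, \qquad b_z(t,x)=\frac{G(z_\Gamma(t,x))}{z_\Gamma(t,x)}, \]
with $a_z:=F'(0)$, $b_z:=G'(0)$ where the denominators vanish. Since $F,G\in C^1(\R)$, $F(0)=G(0)=0$ and $|F(\xi)|+|G(\xi)|\le C(1+|\xi|)$, the map $\xi\mapsto F(\xi)/\xi$, extended by $F'(0)$ at $0$, is continuous and bounded; hence $a_z\in L^\infty(\Omega_T)$, $b_z\in L^\infty(\Gamma_T)$ with $\|a_z\|_\infty,\|b_z\|_\infty\le R$ for a constant $R$ \emph{independent of $z$}, and $a_zz_\Omega=F(z_\Omega)$, $b_zz_\Gamma=G(z_\Gamma)$. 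Applying Proposition~\ref{lem:min} to the linear system \eqref{eq:4}--\eqref{eq:6} with potentials $a_z,b_z$ and data $(y_0,f,g)$ produces a unique minimizer $(y_z,v_z)$ of $J$ on the corresponding set $M_z$, and I would set $\Lambda(z):=y_z$. A fixed point $y=\Lambda(y)$ then solves \eqref{eq:10}--\eqref{eq:13} (distributionally, hence — see below — strongly) with $y(T,\cdot)=0$, and $v:=v_y$ is the sought control.

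\emph{Self-mapping and compactness.} One first checks $\tilde Z_\Omega\hookrightarrow L^2(\Omega_T)$ and $\tilde Z_\Gamma\hookrightarrow L^2(\Gamma_T)$, because the weight $e^{s\tilde\alpha}\tilde\xi^{-3/2}$ is bounded below by a positive constant on $\overline{\Omega_T}$. Therefore $v_z\one_\omega+f\in L^2(\Omega_T)$, and since $y_0\in\H^1$, Proposition~\ref{prop:mild}(d),(c) and Proposition~\ref{prop:MR} show that $y_z$ is in fact the strong solution in $\E_1$ and — using the uniformity in $(a,b)$ of the constants in \eqref{est:XZ} and \eqref{eq:esti-2} — that $\|y_z\|_{\E_1}\le C''$ with $C''$ depending only on $R$, $\|y_0\|_{\H^1}$ and $\|f\|_{\tilde Z_\Omega}+\|g\|_{\tilde Z_\Gamma}$. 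Thus $\Lambda$ maps the convex set $K:=\{z\in\E_1:\|z\|_{\E_1}\le C''\}$ into itself, and $K$ is nonempty, bounded in $\E_1$ and closed in $L^2(0,T;\L^2)$, hence compact in $L^2(0,T;\L^2)$ by Proposition~\ref{prop:E1}(b).

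\emph{Continuity of $\Lambda$ — the main obstacle.} Let $z_n\to z$ in $L^2(0,T;\L^2)$ with $z_n,z\in K$; along a subsequence $z_n\to z$ a.e., so $a_{z_n}\to a_z$, $b_{z_n}\to b_z$ a.e. with values in $[-R,R]$. The minimizers $(y_{z_n},v_{z_n})$ are bounded in $\tilde X\times L^2(\omega_T)$ by \eqref{est:XZ} and in $\E_1\times L^2(\omega_T)$ by the previous step; passing to a further subsequence, $(y_{z_n},v_{z_n})\rightharpoonup(\hat y,\hat v)$ weakly in $\E_1\times L^2(\omega_T)$ and $y_{z_n}\to\hat y$ strongly in $L^2(0,T;\L^2)$. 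Passing to the limit in the distributional identity defining $M_{z_n}$ — noting that for fixed $\varphi\in\E_1$ the operator $L^*$ with potentials $a_{z_n},b_{z_n}$ applied to $\varphi$ converges to the one with $a_z,b_z$ in $L^2(0,T;\L^2)$ — yields $(\hat y,\hat v)\in M_z$. The remaining, and crucial, point is to identify $(\hat y,\hat v)$ with $(y_z,v_z)$, i.e.\ to establish continuity of $z\mapsto\min_{M_z}J$; since no Lagrange multiplier or explicit control formula is at hand, this must be extracted indirectly. Weak lower semicontinuity of $J$ gives $\min_{M_z}J\le J(\hat y,\hat v)\le\liminf_n\min_{M_{z_n}}J$. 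For the reverse inequality I would apply Proposition~\ref{lem:min} to the $z_n$-problem with zero initial value and inhomogeneities $(a_{z_n}-a_z)y_z\in\tilde Z_\Omega$, $(b_{z_n}-b_z)y_z\in\tilde Z_\Gamma$ (which tend to $0$ in $\tilde Z_\Omega$, $\tilde Z_\Gamma$ by dominated convergence), obtaining a correction $(e_n,\zeta_n)$ with $e_n(T,\cdot)=0$ and $\|e_n\|_{\tilde X}+\|\zeta_n\|_{L^2(\omega_T)}\to0$; after rewriting the $a_z,b_z$-terms through $a_{z_n},b_{z_n}$ and subtracting, linearity of the distributional formulation gives $(y_z-e_n,\,v_z-\zeta_n)\in M_{z_n}$, so $\min_{M_{z_n}}J\le J(y_z-e_n,v_z-\zeta_n)\to J(y_z,v_z)=\min_{M_z}J$ by continuity of $J$ on $\tilde X\times L^2(\omega_T)$. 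Hence $\min_{M_{z_n}}J\to\min_{M_z}J$, which forces $J(\hat y,\hat v)=\min_{M_z}J$ and, by uniqueness of the minimizer, $(\hat y,\hat v)=(y_z,v_z)$; a standard subsequence argument then gives $\Lambda(z_n)\to\Lambda(z)$ in $L^2(0,T;\L^2)$, so $\Lambda$ is continuous. With $\Lambda:K\to K$ continuous and $K$ convex compact, Schauder's theorem yields a fixed point $y=\Lambda(y)\in K$; since $(y,v_y)\in M_y$, the function $y$ is a distributional solution of the linear problem with potential $a_y$, bulk inhomogeneity $v_y\one_\omega+f\in L^2(\Omega_T)$ and $y_0\in\H^1$, hence (Proposition~\ref{prop:mild}(d),(c)) the strong solution $y\in\E_1$; as $a_yy=F(y)$ and $b_yy=G(y)$, it solves \eqref{eq:10}--\eqref{eq:13} strongly with $y(T,\cdot)=0$, and $y\in\tilde X$ by Proposition~\ref{lem:min}. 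Uniqueness of the strong solution for $v=v_y$ follows from a Gronwall estimate using $\E_1\hookrightarrow C([0,T];\H^1)$ and the local Lipschitz continuity of $F,G$. I expect the continuity of $\Lambda$ — equivalently, stability of the minimal-norm control under perturbations of the potential — to be the main difficulty, and it is handled by combining the compactness of $K$ with the lower/upper semicontinuity of $z\mapsto\min_{M_z}J$, the upper bound coming from the correction step built on Proposition~\ref{lem:min}.
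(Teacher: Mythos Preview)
Your approach is essentially the same as the paper's: linearize via $\tilde F(\xi)=F(\xi)/\xi$, $\tilde G(\xi)=G(\xi)/\xi$, build the fixed point map $z\mapsto y_z$ using the minimizer from Proposition~\ref{lem:min}, obtain uniform $\E_1$-bounds from Proposition~\ref{prop:MR} together with \eqref{est:XZ}, and invoke the compact embedding $\E_1\hookrightarrow L^2(0,T;\L^2)$ to apply Schauder. The paper's proof is terser on the continuity step---it simply cites a contradiction argument from \cite{I95}---whereas you spell out a direct stability argument: lower semicontinuity of $\min_{M_z}J$ via weak limits, and upper semicontinuity via the correction $(e_n,\zeta_n)$ obtained by applying Proposition~\ref{lem:min} to the perturbation inhomogeneities $(a_{z_n}-a_z)y_z$ and $(b_{z_n}-b_z)y_z$. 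This works because $y_z\in\tilde X$ and $\tilde\xi^{-3/2}$ is bounded, so these perturbations lie in $\tilde Z_\Omega,\tilde Z_\Gamma$ and vanish there by dominated convergence; the resulting competitor $(y_z-e_n,v_z-\zeta_n)\in M_{z_n}$ gives $\limsup_n\min_{M_{z_n}}J\le J(y_z,v_z)$. Your argument and the contradiction argument in \cite{I95} are two sides of the same coin; yours has the advantage of making the stability mechanism (``minimal controls depend continuously on the potential'') fully transparent, at no extra cost.

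One small caveat: your uniqueness remark relies on local Lipschitz continuity of $F,G$ together with $\E_1\hookrightarrow C([0,T];\H^1)$, but $\H^1$ does not embed into $L^\infty$ for $N\ge2$, so local Lipschitz alone does not immediately close a Gronwall loop. The paper does not address this either; in practice the intended regime (cf.\ the introduction) is globally Lipschitz nonlinearities, for which uniqueness is immediate.
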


\begin{proof} 
The result is proved by  Schauder's fixed point theorem. Based on Proposition~\ref{lem:min} we 
can follow the arguments given  in Theorem~3.1 of \cite{I95}. 

We write $F(\xi) = \tilde F(\xi) \xi$ and $G(\xi) = \tilde G(\xi) \xi$ for bounded functions $\tilde F$ and 
$\tilde G$. Fix data $y_0\in \H^1$, $f\in \tilde Z_\Omega$ and $g\in \tilde Z_\Gamma$. Take  
$y\in L^2(0,T;\L^2)$. Proposition~\ref{lem:min} then yields a unique  minimizer $(z_*,v_*) =: \Phi (y)$ of $J$
 among all distributional solutions in  $\tilde X\times L^2(\omega_T)$ of the linearized system
\begin{alignat}{2}
\partial_t z -d\Delta z +\tilde F(y(t,x)) z& =v \one_{\omega} + f(t,x) & \qquad\quad &\text{in }\Omega_T,\notag\\
\partial_t z -\delta \Delta_\Gamma z + d \partial_\nu z + \tilde G(y(t,x)) z  & = g(t,x) &  &\text{on }\Gamma_T,
  \label{m671} \\
z(T,\cdot) = 0, \quad z(0,\cdot)& = y_0 & &\text{in }\overline{\Omega}.\notag
\end{alignat}
We have thus  defined a map $\Phi: L^2(0,T;\L^2) \to \tilde X\times L^2(\omega_T).$ We write $z_*=\Phi_1(y)$ for
the first component of $\Phi(y)$. We consider $\Phi_1$ as an operator $\Phi_1: L^2(0,T;\L^2)\to L^2(0,T;\L^2)$. 

Since $\tilde F,\tilde G$ are bounded, the estimate \eqref{est:XZ} in Proposition~\ref{lem:min} shows that 
$\Phi_1$ maps all $y\in L^2(0,T;\L^2)$ into a ball in  $L^2(0,T;\L^2)$. Moreover, Proposition~\ref{prop:mild} 
says that $\Phi_1(y)$ is even a strong solution of \eqref{m671}. Proposition~\ref{prop:MR} and \eqref{est:XZ} 
thus yield
\begin{align*}
 \|\Phi_1(y)\|_{\E_1} 
    &\leq C\big(\|y_0\|_{\H^1}+\|v\|_{L^2(\omega_T)}+\|f\|_{L^2(\Omega_T)}+\|g\|_{L^2(\Gamma_T)}\big) \\
  &\leq C (\|y_0\|_{\H^1}+ \|f\|_{\tilde Z_\Omega} + \|g\|_{\tilde Z_\Gamma}),
\end{align*}
where $C$ does not depend on $y$. Since $\E_1$ is compactly embedded into $L^2(0,T;\L^2)$ by
 Proposition~\ref{prop:E1}, we conclude that $\Phi_1$ is compact.

 Employing Proposition~\ref{lem:min}, one can prove that  $\Phi_1$ is continuous as in  Theorem 3.1 of \cite{I95}
by a contradiction argument. Schauder's fixed point theorem thus gives a function $\hat y\in  L^2(0,T;\L^2)$ 
satisfying $\hat y=\Phi_1(\hat y)\in\E_1$. Hence, there is a control $\hat v\in L^2(\omega_T)$ such 
that  \eqref{m671} holds with $z=\hat y\in \tilde X$ and $v=\hat v$, as asserted.
 \end{proof}

\end{document}